\newtheorem{theorem}{Theorem}[section]
\newtheorem{proposition}[theorem]{Proposition}
\newtheorem{lemma}[theorem]{Lemma}
\newtheorem{corollary}[theorem]{Corollary}
\newtheorem{remark}[theorem]{Remark}
\numberwithin{equation}{section}
\newcommand{\fr}{\frac }
\newcommand{\N}{\mathbb{N}}
\newcommand{\R}{\mathbb{R}}
\def\sideremark#1{\ifvmode\leavevmode\fi\vadjust{\vbox to0pt{\vss% the remark
			\hbox to 0pt{\hskip\hsize\hskip1em%                          will appear only
				\vbox{\hsize1.1cm\tiny\raggedright\pretolerance10000%          on the side
					\noindent #1\hfill}\hss}\vbox to5pt{\vfil}\vss}}}%
\begin{document}

\title[Morse index and uniqueness]{
		Morse index and uniqueness  \\of positive solutions of the\\ Lane-Emden problem in planar domains}
	
\author[]{F. De Marchis,  M. Grossi, I. Ianni, F. Pacella}
	
\address{Franesca De Marchis, Massimo Grossi, Filomena Pacella, Dipartimento di Matematica,  Universit\`a \emph{Sapienza}, P.le Aldo Moro 5, 00185 Roma, Italy}
\address{Isabella Ianni, Dipartimento di Matematica e Fisica, Universit\`a degli Studi della Campania \emph{Luigi Vanvitelli}, V.le Lincoln 5, 81100 Caserta, Italy}
	
\thanks{2010 \textit{Mathematics Subject classification:}  }
	
\thanks{\textit{Keywords}: Lane-Emden equation, Morse index, uniqueness, positive solutions, convex domains}
	
\thanks{Research partially supported by: PRIN $2015$KB$9$WPT grant and INDAM - GNAMPA}

\begin{abstract} 
		We compute the Morse index  of \emph{$1$-spike solutions} of the semilinear elliptic  problem
		\begin{equation}
		\label{abstr}\tag{$\mathcal P_p$}
		\left\{\begin{array}{lr}-\Delta u= u^p &  \mbox{ in }\Omega\\
		u=0 &\mbox{ on }\partial \Omega
		\\
		u>0 & \mbox{ in }\Omega
		\end{array}\right.
		\end{equation}
		where $\Omega\subset \R^2$ is a smooth bounded domain and $p>1$ is sufficiently large.
		\\
		When $\Omega$ is convex, our result, combined with the characterization in \cite{DIPpositive}, a result in \cite{Lin} and with recent uniform estimates in \cite{Sirakov}, gives the uniqueness of the solution to \eqref{abstr}, for $p$  large.
		This proves, in dimension two and for $p$ large, a conjecture by Gidas-Ni-Nirenberg \cite{GNN}.
\end{abstract}

\maketitle

\section{Introduction}
	We consider the Lane-Emden Dirichlet problem 
	\begin{equation}\label{problem}
	\left\{\begin{array}{lr}-\Delta u= u^p &  \mbox{ in }\Omega\\
	u=0 &\mbox{ on }\partial \Omega
	\\
	u>0 & \mbox{ in }\Omega
	\end{array}\right. 
	\end{equation}
	where $p>1$ and $\Omega\subset \R^2$ is a smooth bounded domain. 
	\\
	This work focuses on the following issues:  \emph{the problem of computing the Morse index and studying the nondegeneracy} of solutions of \eqref{problem} in any general domain $\Omega$  and \emph{the question of the uniqueness of the solution of \eqref{problem}   when the domain $\Omega$ is convex}. As we will see the two topics are strictly related.
	\\
	\\
	We recall that in any smooth bounded domain $\Omega\subset \mathbb R^N$, $N\geq 2$,  problem \eqref{problem} admits at least one solution for any $p>1$ (and $p<\frac{N+2}{N-2}$, when $N\geq 3$), which can be obtained by standard variational methods, for example minimizing the associated energy functional on the Nehari manifold. Such a solution is usually called  \emph{least energy solution}.
	\\
	\\
	Uniqueness or multiplicity results are known, depending on the domain $\Omega$ and on the value of $p$. 
	\\
	\\
	When $p$ is close enough to $1$ it is known that the  solution is  unique and nondegenerate, in any domain $\Omega\subset \mathbb R^N$	(see \cite{DamascelliGrossiPacella, DancerMa2003,  Lin}). 
	\\\\
	When the domain is a ball, as a consequence of the famous symmetry result by Gidas, Ni and Nirenberg \cite{GNN}  it follows that any solution of \eqref{problem}  is radial, and then  one immediately gets the  uniqueness of the solution by ODE techniques.  
	\\
	\\
	In \cite{GNN} it has been conjectured that \emph{\eqref{problem} admits a unique solution in any convex domain}, in any dimension $N\geq 2$, as long as $p$ is such that a solution exists. 
	A complete answer to this conjecture has not been given so far, while partial results are available as we describe below.
	\\
	\\
	Notice that  there are non-convex domains for which multiple solutions to \eqref{problem} exist. The typical case is the annulus or more general annular domains (see for example \cite{CaWa, GGPS, YYLi} and \cite{BCGP} where a new type of positive solution is constructed). We also quote \cite{EspositoMussoPistoiaPos} for not simply connected planar domains and \cite{Dancer88} for  dumb-bell shaped domains.
	\\
	\\
	On the other side convexity is not necessary for uniqueness, for example for $p$ close to $1$ as we have already recalled. Another case is considered in \cite{Zou} where uniqueness has been proved (in dimension $N\geq 3$) when the domain $\Omega$ is a suitable perturbation of the ball, not necessarily convex.  Moreover uniqueness and nondegeneracy of the solution to \eqref{problem} hold in any domain $\Omega\subset\mathbb R^N$  which is symmetric and convex with respect to $N$ orthogonal directions, for every  $p>1$ if $N=2$ (\cite{DamascelliGrossiPacella, Dancer88}) and for $p$ slightly subcritical if  $N\geq 3$ (\cite{GrossiADE2000}). Note that these domains do not need to be convex. 
	\\
	\\
	We also recall that in dimension $N=2$, when the domain is $\Omega=(0,1)^2$ and $p=2,3$ the uniqueness and nondegeneracy are obtained in \cite{Computer, Computer2} via a computer assisted proof, even in the case when a linear term is added, giving a precise description of the solution.
	\\
	\\
	Observe that \emph{nondegeneracy is a sufficient condition for uniqueness} in any dimension $N\geq 2$. Indeed, as pointed out in \cite{DamascelliGrossiPacella, Lin}, uniqueness easily follows by extending each possible branch of  nondegenerate solutions up to  $p$ close to $1$ and then  exploiting the uniqueness result already known for $p$ in that range.
	\\
	\\
	This approach was pursued by Lin (\cite{Lin}), who proved the uniqueness of the  least energy solution to \eqref{problem}  for any $p>1$ in any convex domain when the dimension is $N=2$. The proof consists in observing that any least energy solution has \emph{Morse index equal to $1$} (this is always true in any $\Omega\subset\mathbb R^N,$ $N\geq 2$) and then in showing that, \emph{when the domain is convex  and $N=2$, any Morse index $1$-solution to \eqref{problem} is nondegenerate}. This gives the uniqueness of the branch of Morse index $1$-solutions.
	\\
	\\
	Let us recall that the \emph{Morse index} of a solution $u_p$ to \eqref{problem} can be defined as
	\[m(u_p):= \# \{k\in\mathbb N\ :\ \lambda_{k,p}<1\},\]
	where $\lambda_{1,p}<\lambda_{2,p}\leq\lambda_{3,p}\leq \ldots$ is the sequence of eigenvalues  for the problem
\begin{equation}
	\label{linea}
	\left\{
	\begin{array}{ll}
	-\Delta v=\lambda pu_p^{p-1} v & \mbox{in }\Omega\\
	v=0 & \mbox{on }\partial\Omega
	\end{array}
	\right..
\end{equation}
	When  $\lambda =1$ the space of solutions $v$ to \eqref{linea}  is the kernel of the linearized operator at  $u_p$, hence a solution $u_p$ is degenerate iff $\lambda_{k,p}=1$ for some $k$; in this case it is useful to define also the \emph{augmented Morse index} of $u_p$:
	\[m_0(u_p):= \# \{k\in\mathbb N\ :\ \lambda_{k,p}\leq 1\},\]
	clearly $m(u_p)=m_0(u_p)$ for a nondegenerate solution $u_p$.
	\\
	\\
We point out that the Morse index gives a strong qualitative information on the solutions. It enters in detecting  symmetries, singularities, nodal sets as well as classifying solutions  (see e.g.   \cite{AftalionPacella, BahriLions, DancerIndiana2004,  DancerTAMS2005, Farina, Pacella, PacellaWeth2007, YangJFA98}). 
	\\\\
Let us observe that, since $p>1$, the first eigenvalue of \eqref{linea}, which  for any solution $u_p$ to \eqref{problem} is $\lambda_{1,p}=\frac{1}{p}$, is always less than $1$, hence $m(u_p)\geq 1$.
In general to get an exact computation of the Morse index  is not an easy task since it involves precise information on the spectrum of a linear Schr\"odinger type operator.
	\\
	\\
	In dimension $N\geq 3$ some results are available when $p$ is slightly subcritical, namely $p=p_{\varepsilon}=\frac{N+2}{N-2}-\varepsilon$, with $\varepsilon>0$  a small parameter. In this case it is well known (\cite{BahriLiRey, Han,  Rey, Schoen, Struwe}) that any bounded sequence in $H^1_0(\Omega)$ of solutions 
	to \eqref{problem}  (up to a subsequence) either converges  as $\varepsilon\rightarrow 0$
	to a positive solution of the critical problem (if any), or it blows-up at $k$ points $x_{1,\infty},x_{2,\infty},\ldots,x_{k,\infty} \in \Omega$. In this second case the  sequences of solutions which blow-up are usually called \emph{multi-bubbles solutions} since they 
	can be approximated in $H^1_0(\Omega)$ as the $\sum_{j=1}^k P\delta_{j,\varepsilon}$, where $P\delta_{j,\varepsilon}$ is the  projections onto $H^1_{0}(\Omega)$ of the  function $\delta_{j,\varepsilon}(x):= \eta_{j,\varepsilon}^{\frac{N+2}{N-2}}   \left(   1+\eta_{j,\varepsilon}^2|x-x_{j,\infty}|^2\right)^{-\frac{N+2}{N-2}}$, for certain positive parameters $\eta_{j,\varepsilon}\rightarrow +\infty$ as $\varepsilon \rightarrow 0$.
	The points $x_{j,\infty}$  are critical points of an explicit function which involves Green and Robin function of the Dirichlet Laplacian in $\Omega$ and the total energy converges to the number $k\cdot S$, where $S$ is the best Sobolev constant.
	\\
	In \cite{BahriLiRey} and \cite{Rey} Bahri-Li-Rey for $N\geq 4$  and Rey  for $N=3$ (see also \cite{ChoiKimLee}) proved that the Morse index of a \emph{$k$-bubble solution} $u_{p_{\varepsilon}}$ is determined, for $\varepsilon$ small, by the number  $\ell$ of the negative eigenvalues (or the number $\ell_0$ of the non-positive eigenvalues) of a certain symmetric matrix whose entries are a combination of Green function, Robin function, and their first and second derivatives at the blow-up points $x_{j,\infty}$, according to the following chain of inequalities:
	\[ k+\ell\leq m(u_{p_{\varepsilon}})\leq m_0(u_{p_{\varepsilon}})\leq k+\ell_0\leq k (1+N).\]
	Hence when $\ell=\ell_0$  the solution   $u_{p_{\varepsilon}}$ is nondegenerate  and
	\[m(u_{p_{\varepsilon}})=k+\ell\]
	for $\varepsilon$ sufficiently small.	
%	In particular when $k=1$ ($1$-bubble solution) the matrix involved reduces to the hessian matrix of the Robin function computed at the unique blow-up point $x_{1,\infty}$.
%	In this case in \cite{GrossiPacellaMathZ} it is also analyzed the asymptotic behavior as $\varepsilon \rightarrow 0$ of the first $(2+N)$ eigenvalues and eigenfunctions of the linearized operator at the $1$-bubble solution.
%	This study has been extended to any multi-bubble solution in \cite{ChoiKimLee}.
	\\
	\\
	In dimension $N=2$ there are not many results about the computation of the Morse index for the solutions to \eqref{problem}, the main reason being the lack of a complete  understanding of the  properties of the solutions to \eqref{problem}, in dependence on the exponent  $p$.
	\\
	The $2$-dimensional case is indeed different, in particular there is no Sobolev critical exponent and the study of the asymptotic behavior of solutions as $p\rightarrow +\infty$, which could be seen as the counterpart of the asymptotic analysis for $N\geq 3$, was not carried out until recently with the exception of the special case of least energy solutions  (see \cite{AdimurthiGrossi, RenWeiTAMS1994} and \cite{RenWeiPAMS1996}). \\
	In \cite{DIPJEMS,  DIPpositive, DGIPsqrte}  a fairly complete characterization of the asymptotic behavior as $p\rightarrow +\infty$ of the solutions to \eqref{problem}, even in the case of sign-changing solutions, has been obtained. 
The authors consider families of solutions $u_p$ to \eqref{problem} in a general smooth bounded domain $\Omega\subset\mathbb R^2$ which satisfy the uniform bound
	\begin{equation}
	\label{energylimit}
	\sup_{p}\ p \|\nabla u_p\|^2_{2}\leq C
	\end{equation}
	and show that in the limit as $p\rightarrow +\infty$ these  $u_p$ are necessarily  \emph{multi-spike solutions}. 
More precisely, differently from the higher dimensional case,  they do not blow-up  and there exists a finite number $k$   of distinct  points $x_{j,\infty}\in\Omega$, $j=1,\ldots, k$   and a sequence $p_n\rightarrow +\infty$ as $n\rightarrow +\infty$ such that $u_{p_n}$ concentrate at the set 
\[\mathcal S:=\{x_{1,\infty},\ldots, x_{k,\infty}\}.\]
Moreover
\[\lim_{n\rightarrow +\infty}\max_{\overline{B_{\delta}(x_{j,\infty})}} u_{p_n}= \sqrt{e} \qquad\mbox{for small $\delta>0$}\]
		and 
\[	\lim_{n\rightarrow +\infty}p_nu_{p_n}= 8\pi \sqrt{e} \sum_{j=1}^k G(\cdot,x_{j,\infty})\  \mbox{ in } 
		C^2_{loc}(\overline\Omega\setminus\mathcal S), 	\]
where   $G$ is the Green function of $-\Delta$ in $\Omega$ under Dirichlet boundary conditions. Furthermore the location of the concentration points depends on 
$G$ and its regular part $H$
		according to the following system
\begin{equation}\label{x_j relazioneTeoIntro}
		-\nabla_x H(x_{j,\infty},x_{j,\infty})+\sum_{j\neq h}\nabla_x G(x_{j,\infty},x_{h,\infty})=0.		
\end{equation}
		In \cite{DGIPsqrte} it is also proved that the total energy is quantized to integer multiple of $8\pi e$:
		\[\lim_{n\rightarrow +\infty}p_n\|\nabla u_{p_n}\|^2_2=k\cdot 8\pi e,\] 
and  for the whole family $u_p$ it holds:
		\begin{equation}
		\label{NormaLinf}
		\lim_{p\rightarrow +\infty}\|u_{p}\|_{L^{\infty}(\Omega)}=\sqrt{e}.
		\end{equation}
		We refer to  Section \ref{section:preliminaries} for all the details about the asymptotic results, in particular see Theorem \ref{teoAsymptotic}. When $u_{p_n}$ is a sequence of solutions satisfying all the properties in Theorem \ref{teoAsymptotic} with $k=1$, we will call it simply a \emph{$1$-spike sequence of solutions}.
	\\
	\\
	In this paper, by exploiting this asymptotic analysis,  we compute the Morse index of the $1$-spike solutions to \eqref{problem}  for large values $p_n$, getting so a result analogous to that 
	obtained by Bahri-Li-Rey \cite{BahriLiRey} and Rey  \cite{Rey} for the case of $1$-bubble solutions in dimension $N\geq 3$.\\ 
	Observe that in this case  the  system \eqref{x_j relazioneTeoIntro} reduces to the single equation \[\nabla_x H(x_\infty,x_\infty)=0,\] where $x_{\infty}:=x_{1,\infty}$, namely $x_\infty\in \Omega$ is a critical point of the  Robin function $x\mapsto R(x):=H(x,x)$. 
	\\	
	The function  $R$ is $C^2$ and we denote by  $m(x_\infty)$ and  $m_0(x_\infty)$ the Morse and augmented Morse index of  $x_\infty$, as a critical point of $R$, that is:
	\[\left\{
	\begin{array}{lr}
	m(x_\infty):= \# \{k\in\{1,2\}\ :\ \mu_k<0\}
	\\
	m_0(x_\infty):=\# \{k\in\{1,2\}\ :\ \mu_k\leq0\}
	\end{array}
	\right.
	\]
	where $\mu_1\leq \mu_2$ are the eigenvalues of the hessian matrix $D^2 R(x_{\infty})$ of the Robin function $R$ at the point $x_\infty$.
	\\
	\\
	Our first  result is the following: 
	\begin{theorem}
		\label{theorem:Morse1bubble}
		Let  $(u_{p_n})_n$ be a $1$-spike sequence of solutions to \eqref{problem}. Then there exists $n^{*}\in\mathbb N$ such that
		\begin{equation}\label{diesis} 1+m(x_{\infty})\leq m(u_{p_n})\leq m_0(u_{p_n})\leq 1+m_0(x_\infty)\leq  2, \quad \forall n\geq n^{*}.\end{equation}
		Moreover if $x_{\infty}$ is nondegenerate then  $u_{p_n}$ is nondegenerate for $n\geq n^*$ and
		\[m(u_{p_n})=1+m(x_{\infty}), \quad \forall n\geq n^{*}.\]	
	\end{theorem}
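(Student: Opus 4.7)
The plan is to analyze the spectrum of the linearized problem \eqref{linea} by a blow-up argument at the concentration point $x_\infty$, matching the rescaled eigenfunctions with the kernel of the linearized Liouville operator on $\mathbb R^2$ at leading order, and then extracting the lower order corrections in terms of the Hessian $D^2R(x_\infty)$ of the Robin function. A first easy remark is that $u_{p_n}$ itself is a positive solution of \eqref{linea} with $\lambda=1/p_n$, so it is, up to sign, the first eigenfunction and $\lambda_{1,p_n}=1/p_n\to 0<1$. This eigenvalue always contributes $1$ to $m(u_{p_n})$ and is responsible for the ``$+1$'' in \eqref{diesis}.

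For $k\geq 2$, let $v_{k,n}$ be an $L^2$-normalized eigenfunction of \eqref{linea} with eigenvalue $\lambda_{k,p_n}$ and rescale as $\tilde v_{k,n}(y):=v_{k,n}(x_\infty+\varepsilon_n y)$, where $\varepsilon_n\to 0$ is the scaling parameter from Theorem \ref{teoAsymptotic} for which $p_n\varepsilon_n^2 u_{p_n}^{p_n-1}(x_\infty+\varepsilon_n\,\cdot\,)\to e^U$ in $C^2_{loc}(\mathbb R^2)$, $U$ being the standard Liouville profile. After extracting a subsequence, $\tilde v_{k,n}$ converges locally to a bounded solution of
\[
-\Delta w=\lambda_k\, e^U w\quad\text{in }\mathbb R^2,\qquad \lambda_k:=\lim_n\lambda_{k,p_n}.
\]
Using the classification of bounded solutions of this limit eigenvalue problem, together with a non-vanishing-at-infinity argument that exploits $p_n u_{p_n}\to 8\pi\sqrt e\,G(\cdot,x_\infty)$ in $C^2_{loc}(\overline\Omega\setminus\{x_\infty\})$, one concludes that $\lambda_{2,p_n}$ and $\lambda_{3,p_n}$ both converge to $1$, their rescaled eigenfunctions converging into $\mathrm{span}(\partial_1 U,\partial_2 U)$, while $\liminf_n\lambda_{k,p_n}>1$ for every $k\geq 4$. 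In particular $m_0(u_{p_n})\leq 3$, which already gives the rightmost inequality in \eqref{diesis}.

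The heart of the proof is then to detect the sign of $1-\lambda_{k,p_n}$ for $k=2,3$. Following a Lyapunov--Schmidt type reduction in the spirit of \cite{BahriLiRey,Rey}, I would construct two approximate eigenfunctions
\[
\varphi_i^{(n)}(x)=\partial_i U\!\Big(\tfrac{x-x_\infty}{\varepsilon_n}\Big)-Z_i^{(n)}(x),\quad i=1,2,
\]
with $Z_i^{(n)}$ a correction enforcing $\varphi_i^{(n)}\in H^1_0(\Omega)$ whose leading behaviour is dictated by the regular part $H$ of the Green function. Evaluating the quadratic form $Q_n(\varphi):=\int_\Omega |\nabla\varphi|^2-\int_\Omega p_n u_{p_n}^{p_n-1}\varphi^2$ on the two-dimensional span of $\varphi_1^{(n)},\varphi_2^{(n)}$ and inserting this space into the min-max characterization of $\lambda_{2,p_n},\lambda_{3,p_n}$, the fine asymptotics of $u_{p_n}$ from \cite{DGIPsqrte} should yield an expansion
\[
\lambda_{k,p_n}-1=c_n\bigl(\mu_{k-1}+o(1)\bigr),\quad k=2,3,
\]
with $c_n>0$ and $c_n\to 0$. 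Once this is established, the number of $k$ with $\lambda_{k,p_n}<1$ (resp.\ $\leq 1$) is exactly $1+\#\{i:\mu_i<0\}=1+m(x_\infty)$ (resp.\ $1+m_0(x_\infty)$), which yields both sides of \eqref{diesis}. If in addition $\mu_1,\mu_2\neq 0$, the expansion is strict, $\lambda_{k,p_n}=1$ is excluded for $n$ large, and $u_{p_n}$ is nondegenerate with Morse index $1+m(x_\infty)$.

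The main obstacle lies precisely in the reduced expansion above. One must handle the two scales of the problem in parallel --- the inner $\varepsilon_n$-scale of the bubble and the outer $O(1)$-scale where $G$ and $H$ live --- and carefully control the contribution of the intermediate annular region, using the sharp rate of $\varepsilon_n$ and the $C^2_{loc}$ convergence of $p_n u_{p_n}$ from Theorem \ref{teoAsymptotic} in order to isolate the Robin-Hessian term at the correct order. Because the limit problem is of Liouville type (exponential nonlinearity) rather than of critical Sobolev type (polynomial), the precise expansions of \cite{BahriLiRey,Rey} cannot be quoted directly and must be redone in the present regime; this is the nontrivial technical ingredient.
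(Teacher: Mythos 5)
Your overall blueprint is correct in spirit --- blow up, match the rescaled eigenfunctions with the kernel of the linearized Liouville operator, extract the Robin--Hessian correction --- and it agrees with the paper's in those respects. But there are two genuine gaps.

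The first and most serious is the claim that $\liminf_n\lambda_{k,p_n}>1$ for every $k\geq 4$. That is false. The kernel of $-\Delta w=e^U w$ among bounded functions on $\R^2$ is \emph{three}-dimensional (Lemma \ref{thm:autofunznucleo}): besides the two translation modes $y_j/(8+|y|^2)$ that you account for, there is the dilation mode $(8-|y|^2)/(8+|y|^2)$. Equivalently, on $S^2$ the eigenvalue $\Lambda=1$ of the limit problem has multiplicity three. Accordingly $\lambda_{4,p_n}\to 1$ as well, and its rescaled eigenfunction converges precisely to the dilation mode by orthogonality to $v_{2,p_n},v_{3,p_n}$ (see \eqref{ele4}). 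Since $\lambda_{4,p_n}\to 1$, your argument cannot by itself conclude that $\lambda_{4,p_n}>1$ for $n$ large, so the upper bound $m_0(u_{p_n})\leq 2$ (or even $\leq 3$) is not yet established. The paper closes this with a separate Pohozaev-type computation (Proposition \ref{prop:assurdob}) that yields $\lambda_{4,p_n}=1+\tfrac{6}{p_n}+o(\tfrac1{p_n})$; note that $\varepsilon_n^2$ is exponentially small in $p_n$, so this $1/p_n$ correction dominates the Robin--Hessian scale $\varepsilon_n^2$ that governs $\lambda_{2,p_n},\lambda_{3,p_n}$ and has a definite positive sign. You must add an argument of this kind, or an equivalent lower bound on $\lambda_{4,p_n}$, before the chain of inequalities in \eqref{diesis} is complete.

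The second point is more of a route comparison. You propose a Lyapunov--Schmidt reduction on a two-dimensional space of approximate eigenfunctions $\partial_i U(\cdot/\varepsilon_n)-Z_i^{(n)}$ and a quadratic-form expansion. The paper instead uses cut-off derivatives $\phi_n\,\partial_{x_j}u_{p_n}$ only to get the crude upper bound $\lambda_{i,p_n}\leq 1+C\varepsilon_n^2$ (Proposition \ref{prop:1stimaautov}), then derives the sharp expansion $\lambda_{i,p_n}=1+24\pi\varepsilon_n^2\mu_{i-1}+o(\varepsilon_n^2)$ directly from Pohozaev-type integral identities \eqref{integralidentity}, \eqref{star} applied to the actual eigenfunctions, combined with Green's representation and the boundary integrals \eqref{Green1}--\eqref{Robin2}. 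Your approach could in principle work, but because the quadratic form on the test space gives only an upper bound for $\lambda_{2,p_n}$ and $\lambda_{3,p_n}$ (via min-max), you would need a matching lower bound to pin down the sign of $\lambda_{i,p_n}-1$; the integral-identity route gives the two-sided expansion in one stroke. A lesser technical remark: for the fine expansion you should rescale around the actual local maximum $x_n$ rather than $x_\infty$ (the drift $|x_n-x_\infty|$ is not negligible at the scale $\varepsilon_n^2$ you need), and your cancellations should exploit $\nabla R(x_\infty)=0$ as in \eqref{sx} and \eqref{sx2}.
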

	Observe that by \eqref{diesis} $m(u_{p_n})$ can be either  $1$ or $2$. This is due to the estimate $m_0(x_{\infty})\leq 1$, which is a consequence of the properties of the Robin function in planar domains (see Section \ref{section:preliminaries}). In higher dimension $N$ one has instead the weaker estimate $m_0(x_{\infty})\leq N$.
	\\
	The proof of Theorem \ref{theorem:Morse1bubble} is quite long and consists in analyzing the asymptotic behavior of the eigenvalues and eigenfunctions of the linearized operator at the \emph{$1$-spike solutions} $u_{p_n}$  by exploiting all the information collected in Theorem \ref{teoAsymptotic}.  
	A similar approach can be found in \cite{GrossiPacellaMathZ} for the almost critical problem in dimension $N\geq 3$ (see also \cite{GladialiGrossi} where the Gelfand problem is studied). 
	However the analysis of the planar Lane-Emden problem for large exponent is more delicate and several additional estimates are required.\\ 
	In the case of  sign-changing radial solutions in the ball the  Morse index has been computed in  \cite{DeMarchisIanniPacellaMathAnn, DeMarchisIanniPacellaAdvances} following a different approach which combines the information on the asymptotic behavior of the solutions with a spectral decomposition. This methods cannot be used for general non-symmetric solutions. 
	\\
	The precise asymptotic  behavior, as $n\rightarrow +\infty$, of the eigenvalues $\lambda_{i,p_n}$ and eigenfunctions $v_{i,p_n}$ for $i=2,3,4$, of the linearized operator $L_{p_n}=-\Delta -p_n u_{p_n}^{p_n-1}$ at the $1$-spike sequence of solution $u_{p_n}$ is described in the following theorem
	\begin{theorem}
		\label{teoAutofunzioniAutovalori}
		Under the same assumptions of Theorem \ref{theorem:Morse1bubble} one has, as $n\to+\infty$,
		\begin{eqnarray}		
		\label{ele2}
		&&	\frac{v_{i,p_n}}{\varepsilon_n}= 2\pi\sum_{j=1}^2 a^i_j\frac{\partial G}{\partial x_j}(\cdot,x_\infty)+ o(1)\qquad \mbox{in $C^1_{loc}(\overline\Omega\setminus\{x_\infty\})$}\qquad i=2,3
		\\ 
		\label{ele1}
		&&		
		\lambda_{i,p_n} =1+24\pi \varepsilon_{n}^2\mu_{i-1} +o(\varepsilon_n^2)\qquad i=2,3
		\\
		\label{ele4}
		&&	p_n v_{4,p_n}= 4\pi b \,G(\cdot,x_\infty)+o(1)\qquad \mbox{in $C^1_{loc}(\overline\Omega\setminus\{x_\infty\})$}
		\\
		\label{ele3}	
		&&	\lambda_{4,p_n}= 1+\frac{6}{p_n}+o(\frac{1}{p_n})
		\end{eqnarray}
		where $\varepsilon_n:=[p_n\|u_{p_n}\|_{L^{\infty}(\Omega)}^{p_n-1}]^{-1/2}\rightarrow 0$ as $n\rightarrow +\infty$ by \eqref{NormaLinf},  $\mu_1\leq\mu_2$ are  the eigenvalues of the hessian  matrix $D^2 R(x_\infty)$,  $(a_1^i,a_2^i)\in\R^2\setminus\{(0,0)\}$ and $b\in\R\setminus\{0\}$.
	\end{theorem}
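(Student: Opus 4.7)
The strategy combines a blow-up analysis near $x_\infty$ with an outer Green-function representation, then extracts the eigenvalue corrections by projecting the equation onto approximate kernel directions. The scheme parallels \cite{GrossiPacellaMathZ,GladialiGrossi} in the higher-dimensional case, with the Aubin--Talenti bubble replaced by the Liouville bubble.

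\emph{Blow-up.} Setting $\tilde v_{i,n}(y):=v_{i,p_n}(x_\infty+\varepsilon_n y)$ (after normalizing $\|v_{i,p_n}\|_\infty=1$), the eigenvalue equation becomes $-\Delta\tilde v_{i,n}=\lambda_{i,p_n}V_n\tilde v_{i,n}$ on $\Omega_n:=\varepsilon_n^{-1}(\Omega-x_\infty)$, with $V_n:=\varepsilon_n^2 p_n u_{p_n}^{p_n-1}(x_\infty+\varepsilon_n\cdot)\to e^U$ in $C^1_{loc}(\R^2)$ by Theorem~\ref{teoAsymptotic}, where $U$ is the standard Liouville bubble ($-\Delta U=e^U$, $\int e^U=8\pi$). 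Standard elliptic estimates yield (up to subsequences) $\tilde v_{i,n}\to w_i$ in $C^1_{loc}(\R^2)$, with $w_i$ a bounded solution of $-\Delta w=\ell_i e^U w$, $\ell_i:=\lim\lambda_{i,p_n}$. The bound $m_0(u_{p_n})\leq 2$ from \eqref{diesis} together with Courant--Fischer forces $\ell_2=\ell_3=\ell_4=1$, so each $w_i$ lies in the three-dimensional kernel of $-\Delta-e^U$, spanned by the translations $\partial_{y_1}U,\partial_{y_2}U$ and the dilation mode $\psi_0(y):=(8-|y|^2)/(8+|y|^2)$.

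\emph{Outer expansions and eigenvalue asymptotics.} The Green representation
\[
v_{i,p_n}(x)=\lambda_{i,p_n}\!\int_\Omega G(x,y)\,p_n u_{p_n}^{p_n-1}(y)\,v_{i,p_n}(y)\,dy,
\]
combined with the change of variable $y=x_\infty+\varepsilon_n z$ and a Taylor expansion of $G(x,\cdot)$ around $x_\infty$, yields the outer formulas. The monopole $\int e^U w_i\,dz$ vanishes for every $i$ (by oddness for $i=2,3$; because $\psi_0\in\ker(-\Delta-e^U)$ for $i=4$), so one descends to the next order. For $i=2,3$ the dipole term uses $\int_{\R^2}z_k e^U\partial_{y_j}U\,dz=-8\pi\delta_{jk}$ (from $e^U\partial_jU=\partial_je^U$ and integration by parts); absorbing $\|u_{p_n}\|_\infty\to\sqrt e$ produces the prefactor $2\pi$ in \eqref{ele2}, and also forces $(a_1^i,a_2^i)\neq 0$. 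For $i=4$ the next term comes from the subleading correction
\[
p_n u_{p_n}^{p_n-1}(x_\infty+\varepsilon_n z)=\frac{e^{U(z)}}{\varepsilon_n^2\|u_{p_n}\|_\infty}\Bigl(1+\frac{\eta(z)}{p_n}+O(p_n^{-2})\Bigr),
\]
obtained by expanding $(p_n-1)\log(1+\tilde u_n/p_n)$, which gives \eqref{ele4} with $b\neq 0$. For the eigenvalue expansions, test the equation against the $H^1_0(\Omega)$-projections of the approximate kernel modes. Using
\[
H(x_\infty+\varepsilon_n\,\cdot\,,x_\infty+\varepsilon_n\,\cdot\,)=R(x_\infty)+\tfrac{\varepsilon_n^2}{2}(\cdot)^TD^2R(x_\infty)(\cdot)+o(\varepsilon_n^2),
\]
for $i=2,3$ the leading orthogonalities cancel and what remains is a $2\times 2$ linear system with matrix $24\pi\varepsilon_n^2 D^2R(x_\infty)+o(\varepsilon_n^2)$, whose diagonalization yields \eqref{ele1}. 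For $i=4$, the same projection technique against an approximation of $\psi_0$, combined with the subleading expansion of $p_n u_{p_n}^{p_n-1}$ above and the Liouville integrals $\int e^U\psi_0^2$ and $\int e^U\eta\,\psi_0$, produces the $6/p_n$ correction and hence \eqref{ele3}. The ordering $\varepsilon_n^2\ll 1/p_n$ (since $\varepsilon_n^{-2}\sim p_n e^{(p_n-1)/2}$) is then consistent with the separation of the translation and dilation modes.

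\emph{Main obstacle.} The technical crux is to upgrade the $C^1_{loc}$-convergence of the blow-up step to sharp uniform weighted estimates on the entire rescaled domain $\Omega_n$, at order $o(\varepsilon_n^2)$ for $i=2,3$ and $o(1/p_n)$ for $i=4$, as required to rigorously justify the projections above. This calls for inverting the linearized Liouville operator $-\Delta-e^U$ on the orthogonal complement of its kernel in a suitable weighted $L^\infty$-space (a Fredholm-type argument) and matching the resulting inner control to the outer Green expansion across an intermediate annulus. Pinning down the precise constants $2\pi$, $4\pi$, $24\pi$, $6$ through this double expansion is delicate, and is more involved than the higher-dimensional analysis of \cite{GrossiPacellaMathZ} because the Liouville bubble decays only polynomially with infinite second moments (e.g.\ $\int_{\R^2}|y|^2 e^U=\infty$), so several integrals must be rewritten against carefully weighted test functions.
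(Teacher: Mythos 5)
Your skeleton is on target: blow-up to the Liouville kernel, outer Green representation with a Taylor expansion of $G(x,\cdot)$ around the concentration point, the translations yielding \eqref{ele2}--\eqref{ele1} and the dilation mode yielding \eqref{ele4}--\eqref{ele3}, and the remark that the Robin Hessian enters through the next-order expansion of $H$. The constants and the key Liouville integrals ($\int_{\R^2}z_ke^U\partial_{y_j}U=-8\pi\delta_{jk}$, etc.) are the right ones. Your warning that the bubble has only polynomial decay and infinite second moments is exactly the reason the paper needs the refined pointwise bound $\bigl(1+w_n/p_n\bigr)^{p_n-1}\le C_\gamma/(1+|z|^{4-\gamma})$ of Lemma~\ref{lemma:lebesgue} in place of naive dominated convergence.

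However, two points break the argument as you have sketched it.

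First, you deduce $\ell_2=\ell_3=\ell_4=1$ from ``the bound $m_0(u_{p_n})\le2$ from \eqref{diesis}''. This is circular: \eqref{diesis} is the conclusion of Theorem~\ref{theorem:Morse1bubble}, which in the paper is \emph{derived from} Theorem~\ref{teoAutofunzioniAutovalori}, not used to prove it. The paper establishes $\lambda_{i,n}\to1$ for $i=2,3$ directly (Proposition~\ref{prop:1stimaautov}) by testing the Rayleigh quotient with cut-offs of $\partial u_{p_n}/\partial x_j$ and by a contradiction argument excluding $\Lambda<1$ via the Kelvin transform (Lemma~\ref{lemma:varphineq0}); the estimate $\lambda_{4,n}\le1+o(1)$ is proven separately by testing with a cut-off of the dilation mode $(x-x_n)\cdot\nabla u_{p_n}+\tfrac{2}{p_n-1}u_{p_n}$, orthogonalized against $v_{1,n},v_{2,n},v_{3,n}$. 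You need an independent route to these limits.

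Second, ``so one descends to the next order'' hides the hardest step for $i=2,3$: showing that the monopole coefficient $\gamma_{i,n}=\int(1+w_n/p_n)^{p_n-1}\tilde v_{i,n}\,dz$ is not merely $o(1)$ but $o(\varepsilon_n)$. The leading-order vanishing (oddness / $\int e^U\psi_0=0$) only kills the $O(1)$ piece; without the $o(\varepsilon_n)$ bound the term $E_n=\lambda_{i,n}G(x,x_n)\gamma_{i,n}$ could dominate the dipole and destroy \eqref{ele2}. The paper proves $\gamma_{i,n}=o(\varepsilon_n)$ by contradiction, feeding the Green expansion into two Pohozaev-type integral identities (\eqref{integralidentity} and \eqref{star}) evaluated at $x_n$ and using $\nabla R(x_\infty)=0$ together with the Robin-function boundary integrals \eqref{Green1}--\eqref{Robin2}; the same identities then identify $\eta_i$ as an eigenvalue of $D^2R(x_\infty)$ with eigenvector $a^i$. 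Your proposed alternative (inverting $-\Delta-e^U$ on a kernel complement in weighted $L^\infty$ and matching inner and outer expansions) is a genuinely different strategy, closer in spirit to a Lyapunov--Schmidt construction; you correctly flag it as unresolved, and the weighted-space inversion in 2D with borderline decay and the matching across an intermediate annulus would be substantial work. Finally, a technical point: the paper rescales at the local maximum $x_n$ rather than at $x_\infty$; the cancellations in the Pohozaev identities rest on $\nabla R(x_\infty)=0$ and on $x_n\to x_\infty$, and rescaling at $x_\infty$ would require controlling $|x_n-x_\infty|/\varepsilon_n$, which is not addressed.
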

	Theorem \ref{teoAutofunzioniAutovalori} is the counterpart in dimension $N=2$ of the results in \cite{GrossiPacellaMathZ}. \\
%	The question of extending the previous analysis to the case of \emph{multi-spike} solutions for \eqref{problem}, similarly to what was done for the almost critical problem  in dimension $N\geq 3$ in \cite{ChoiKimLee} arises.  However this would  require a more delicate analysis and other estimates due to the interaction among different spikes.
%	\\
%	\\
%	Let us mention that 
%		for sign-changing solutions of the Lane-Emden problem there are few results where the Morse index is computed or at least estimated, we  quote \cite{AftalionPacella, BartschWeth, CCN, DeMarchisIanniPacellaMathAnn, DeMarchisIanniPacellaAdvances}.
As clear from the proof of Theorem \ref{theorem:Morse1bubble}, to get \eqref{diesis} it is sufficient to know the precise asymptotic behavior of $\lambda_{i,n}$ for $i=2,3$. However we believe that it is interesting to see that also the eigenvalue $\lambda_{4,n}$ converges to $1$ and know exactly its rate of convergence, as well as the asymptotic limit of the corresponding eigenfunction which is essentially the Green function.

	\
	
	Finally we focus on the case when the domain $\Omega$ is convex.\\  
	In this situation using Theorem \ref{teoAsymptotic} and a result of \cite[Theorem 2.4]{GrossiTakahashi} we have that the solutions to \eqref{problem} cannot concentrate at more than one point, essentially because the Robin function has only one critical point, hence necessarily $k=1$ in  Theorem \ref{teoAsymptotic} and so Theorem \ref{theorem:Morse1bubble} applies. In particular, combining  Theorem \ref{theorem:Morse1bubble}  with the results in \cite{CaffarelliFriedman} about the critical points of the Robin function in convex planar domains,
	we get:
	\begin{corollary} 
		\label{teoMorse}
		Let $\Omega\subset\mathbb R^2$ be  a smooth bounded and convex domain. Let $(u_p)$ be a family of solutions which satisfies \eqref{energylimit}. Then there exists $p^{\star}=p^{\star}(\Omega, C)>1$ (where   $C$ is the constant in \eqref{energylimit})  such that 
		\begin{equation}\label{diesis2} m(u_p)=1\  \mbox{ and }\  u_p\mbox{ is nondegenerate,  if }p\geq p^{\star}.\end{equation}
	\end{corollary}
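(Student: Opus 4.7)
The plan is to deduce the corollary from Theorem \ref{theorem:Morse1bubble} by using convexity of $\Omega$ in two ways: first to force the number of concentration points to be $k=1$, and second to identify this unique spike location as a nondegenerate minimum of the Robin function. Since \eqref{diesis2} is a statement for the full family $(u_p)$, I would pass from a subsequential conclusion to the full-family one by a standard contradiction argument.

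Concretely, I would suppose for contradiction that no $p^{\star}$ as in the statement exists. Then there is a sequence $p_n\to+\infty$ along which either $m(u_{p_n})\neq 1$ or $u_{p_n}$ is degenerate. The uniform bound \eqref{energylimit} allows me to apply Theorem \ref{teoAsymptotic} along a further subsequence, realising $(u_{p_n})$ as a $k$-spike sequence concentrating at a finite set $\{x_{1,\infty},\ldots,x_{k,\infty}\}\subset\Omega$ whose elements are coupled critical points of the Green--Robin configuration \eqref{x_j relazioneTeoIntro}.

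I would now use convexity of $\Omega$ twice. By \cite[Theorem 2.4]{GrossiTakahashi}, in a convex planar domain the system \eqref{x_j relazioneTeoIntro} has no solution with $k\geq 2$, so necessarily $k=1$ and the sequence is $1$-spike, concentrating at a critical point $x_\infty$ of the Robin function $R$. By \cite{CaffarelliFriedman}, in a convex planar domain $R$ has exactly one critical point and that point is a nondegenerate minimum; hence $x_\infty$ is nondegenerate with $m(x_\infty)=0$. The nondegenerate case of Theorem \ref{theorem:Morse1bubble} then yields, for all large $n$, that $u_{p_n}$ is nondegenerate and
\[m(u_{p_n})=1+m(x_\infty)=1,\]
contradicting the choice of $(p_n)$ and producing the desired threshold.

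The substantive work has already been packaged into Theorem \ref{theorem:Morse1bubble}, the Grossi--Takahashi rigidity and the Caffarelli--Friedman analysis of the Robin function, so no genuine technical obstacle is expected at this final step. The only point deserving a brief check is that $p^{\star}$ can be chosen depending only on $\Omega$ and the energy constant $C$, and not on the particular family: this follows from the contradiction framework, since the limit concentration point $x_\infty$ is uniquely determined by $\Omega$ alone, so the threshold inherited from Theorem \ref{theorem:Morse1bubble} depends only on $(\Omega,C)$.
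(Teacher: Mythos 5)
Your argument is correct and coincides with the paper's own proof: argue by contradiction, extract a concentrating subsequence via Theorem \ref{teoAsymptotic}, use \cite[Theorem 2.4]{GrossiTakahashi} to force $k=1$, use Lemma \ref{thm:Robin2} (Caffarelli--Friedman) to make $x_\infty$ a nondegenerate minimum of $R$, and then invoke the nondegenerate case of Theorem \ref{theorem:Morse1bubble}. No gap; the closing remark on the dependence of $p^\star$ on $(\Omega,C)$ is a harmless elaboration of the same contradiction scheme.
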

	Observe that once $m(u_p)=1$ is proved, then the  nondegeneracy follows by the results in \cite{Lin} for  $\Omega\subset\mathbb R^2$ convex. Anyway here we get the nondegeneracy independently.\\
	Note also that this  result holds for  any family of solutions $u_p$, without passing to a subsequence, differently from Theorem \ref{theorem:Morse1bubble}.
	\\
	\\
	In view of the uniqueness result of \cite{Lin} for solutions of Morse index $1$, from Corollary \ref{teoMorse} it immediately derives that, in any convex domain, \eqref{problem} admits only one solution for $p$ large, as long as \eqref{energylimit} holds. In order to get rid of the condition \eqref{energylimit} and get the full uniqueness result for large $p$, we observe that in star-shaped domains it is possible to see (applying Pohozaev identity) that the energy bound \eqref{energylimit} is equivalent to an  uniform $L^{\infty}$-bound:
	\begin{equation}\label{boundSirakov}\sup_p\|u_p\|_{L^{\infty}(\Omega)}\leq \widetilde C,\quad \mbox{for some constant }\widetilde C>0.
	\end{equation}
	The important bound \eqref{boundSirakov} has been indeed proved very recently in \cite{Sirakov}, therefore \emph{the result \eqref{diesis2} holds without assuming \eqref{energylimit}}. This 	together with the results  in \cite{Lin}  shows  that the Gidas-Ni-Nirenberg uniqueness conjecture in planar  convex domains is true for large values of the exponent  $p$.
	\begin{theorem}\label{teoUnicita}
		Let $\Omega\subset\mathbb R^2$ be a smooth bounded convex domain. Then there exists $p^{\star}=p^{\star}(\Omega)>1$ such that \eqref{problem} admits a unique solution for any $p\geq p^{\star}$.
	\end{theorem}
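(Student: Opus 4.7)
The plan is to combine three ingredients: Corollary~\ref{teoMorse} (which gives Morse index $1$ and nondegeneracy for $p$ large, modulo the energy hypothesis \eqref{energylimit}), the uniqueness criterion of Lin \cite{Lin} for Morse-index-one solutions in convex planar domains, and the uniform $L^\infty$ bound \eqref{boundSirakov} of Sirakov. The logical structure is standard: a nondegenerate Morse-index-one branch of solutions can be continued in $p$ all the way down to $p$ close to $1$, where uniqueness is classical; hence showing that \emph{every} solution of \eqref{problem} has Morse index $1$ and is nondegenerate for $p\geq p^\star(\Omega)$ immediately yields the theorem.

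I would carry this out in three steps. First, I apply Sirakov's theorem \cite{Sirakov} to obtain $\|u_p\|_{L^\infty(\Omega)}\leq \widetilde C(\Omega)$ for every positive solution of \eqref{problem}. Second, using that convexity of $\Omega$ makes it star-shaped with respect to some $x_0\in\Omega$ with $\inf_{\partial\Omega}(x-x_0)\cdot\nu\geq\delta_\Omega>0$, I combine the Pohozaev identity
\[
\frac{2}{p+1}\int_\Omega u_p^{p+1}\,dx\;=\;\frac{1}{2}\int_{\partial\Omega}|\partial_\nu u_p|^2\,(x-x_0)\cdot\nu\,d\sigma
\]
with boundary regularity estimates driven only by $\|u_p\|_{L^\infty(\Omega)}$ to upgrade the $L^\infty$ bound to the uniform energy bound $p\|\nabla u_p\|_2^2\leq C(\Omega)$, i.e.\ \eqref{energylimit} with constant depending only on $\Omega$. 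Third, I feed this into Corollary~\ref{teoMorse}: it produces a threshold $p^\star(\Omega)>1$ such that every solution of \eqref{problem} has Morse index $1$ and is nondegenerate for $p\geq p^\star(\Omega)$. Lin's argument then forces uniqueness.

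The main obstacle is the second step, the $L^\infty$-to-energy conversion. The Pohozaev identity by itself just rewrites the energy as a boundary integral of $|\partial_\nu u_p|^2$, and to obtain an energy bound uniform in $p$ one needs this boundary integral to decay like $1/p^2$. The star-shapedness bounds the weight $(x-x_0)\cdot\nu$ from below, and, combined with the uniform $L^\infty$ bound and standard Dirichlet-Laplacian regularity, it allows one to control the boundary gradient by exploiting the fact that the solutions become flat near $\partial\Omega$ as $p\to\infty$. Once this control is in hand, the remaining steps amount to a direct combination of Corollary~\ref{teoMorse} and the uniqueness result of \cite{Lin}.
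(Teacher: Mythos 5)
Your proposal matches the paper's proof exactly: Sirakov's uniform $L^\infty$ bound \eqref{boundSirakov}, upgraded to the energy bound \eqref{energylimit} via Pohozaev, fed into Corollary~\ref{teoMorse}, and closed by Lin's uniqueness criterion for Morse-index-one solutions in convex planar domains. One correction to your description of Step~2, though: the upgrade from the $L^\infty$ bound to \eqref{energylimit} requires neither boundary regularity estimates nor any asymptotic ``flatness near $\partial\Omega$'' (which would be circular, as that information comes from Theorem~\ref{teoAsymptotic}, itself conditional on \eqref{energylimit}); the Pohozaev identity is self-improving. Indeed, since $\int_\Omega u_p^p\,dx=\int_{\partial\Omega}|\partial_\nu u_p|\,d\sigma$ and $(x-x_0)\cdot\nu\geq\delta_\Omega>0$ on $\partial\Omega$, Pohozaev gives
\[
\frac{\delta_\Omega}{2}\,\|\partial_\nu u_p\|_{L^2(\partial\Omega)}^2
\;\leq\;\frac{2}{p+1}\int_\Omega u_p^{p+1}\,dx
\;\leq\;\frac{2\|u_p\|_\infty}{p+1}\,\|\partial_\nu u_p\|_{L^1(\partial\Omega)}
\;\leq\;\frac{2\|u_p\|_\infty|\partial\Omega|^{1/2}}{p+1}\,\|\partial_\nu u_p\|_{L^2(\partial\Omega)},
\]
so $\|\partial_\nu u_p\|_{L^2(\partial\Omega)}\lesssim \|u_p\|_\infty/p$ and hence $p\|\nabla u_p\|_2^2=p\int_\Omega u_p^{p+1}\lesssim \|u_p\|_\infty^2\leq \widetilde C^2$, uniformly in $p$.
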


Note that we do not make any assumption on the solution (as in \cite{Lin}) or on the geometry of the convex domain $\Omega$ (as in \cite{DamascelliGrossiPacella, Dancer88,  GrossiADE2000, Computer, Computer2, Zou}). On the other side our result applies only
in dimension $N=2$ and for large values of the exponent $p$.

	\

	The paper is organized as follows: in Section \ref{section:preliminaries} we collect some notations and preliminary results. Sections \ref{section:ele12} and \ref{section:ele34} are devoted to the asymptotic analysis of the eigenvalues and eigenfunctions of the linearized operator which leads to the proof of Theorem \ref{teoAutofunzioniAutovalori}. At the end of Section \ref{section:ele34} we then deduce Theorem \ref{theorem:Morse1bubble}. Section \ref{section:conclusion} focuses on the case $\Omega$ convex and contains the proof of Corollary \ref{teoMorse}.
	\\
	\\	
	(*) The results of Theorem \ref{theorem:Morse1bubble}, Theorem \ref{teoAutofunzioniAutovalori} and Corollary \ref{teoMorse} were announced during two conferences in Brasilia (\cite{FilomenaTalk}, September 2017) and Roma (\cite{FrancescaTalk}, January 2018), where the question of obtaining the uniform estimate \eqref{boundSirakov} was posed and its validity was conjectured.

	\tableofcontents

\section{Preliminary results}\label{section:preliminaries}	

\subsection{Green and Robin functions}
Let $G(x,y)$ be the Green function of $-\Delta$ in $\Omega$ with Dirichlet boundary conditions. Then
\begin{equation}\label{HparteregolareGreen}
G(x,y)=-\frac1{2\pi}\log|x-y|-H(x,y),
\end{equation}
where $H(x,y)$ is the regular part of the Green function. Let $R(x)=H(x,x)$ be the Robin function of $\Omega$.

\begin{lemma}\label{thm:Robin}
	Let $\Omega\subset \R^2$ be a bounded domain, then the Robin function satisfies
	\begin{equation}\label{Robinequation}
	\Delta R>0\quad \mbox{ in }\Omega.
	\end{equation}
\end{lemma}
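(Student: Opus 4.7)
My plan is to reduce $\Delta R(x)$ to a mixed second derivative of $H$ on the diagonal and then identify this derivative with (a positive multiple of) the Bergman reproducing kernel of $\Omega$.

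First I would perform the reduction: since $R(x)=H(x,x)$ with $H$ smooth on $\Omega\times\Omega$, harmonic in each slot and symmetric $H(x,y)=H(y,x)$, the chain rule gives
\begin{equation*}
\Delta R(x)=\Delta_xH(x,x)+2\sum_{i=1}^{2}\frac{\partial^{2} H}{\partial x_i\,\partial y_i}(x,x)+\Delta_yH(x,x)=2\sum_{i=1}^{2}\frac{\partial^{2} H}{\partial x_i\,\partial y_i}(x,x).
\end{equation*}
Writing $z=x_1+ix_2$, $w=y_1+iy_2$ and using the Wirtinger identities, one has $\sum_{i=1}^{2}\partial_{x_i}\partial_{y_i}=2(\partial_z\partial_{\bar w}+\partial_{\bar z}\partial_w)$; since $H$ is real and the imaginary part of $\partial_z\partial_{\bar w}H$ vanishes on the diagonal by the symmetry $H(z,w)=H(w,z)$, this collapses to $\Delta R(x)=8\,\partial_z\partial_{\bar w}H(z,z)$. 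Setting $\mathcal K(z,w):=\partial_z\partial_{\bar w}H(z,w)$, harmonicity of $H$ in each slot makes $\mathcal K$ holomorphic in $z$ and antiholomorphic in $w$; combining $H=-\frac{1}{2\pi}\log|z-w|-G$ with the off-diagonal identity $\partial_z\partial_{\bar w}\log|z-w|=0$ yields $\mathcal K(z,w)=-\partial_z\partial_{\bar w}G(z,w)$ for $z\neq w$, with a smooth extension to $z=w$.

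Next I would invoke the Bergman--Schiffer identity for bounded planar domains, $K_B(z,w)=-4\,\partial_z\partial_{\bar w}G(z,w)$, where $K_B$ is the Bergman reproducing kernel of $L^2_{\mathrm{hol}}(\Omega)$. Since $\Omega$ is bounded, point evaluation is a non-trivial continuous functional on $L^2_{\mathrm{hol}}(\Omega)$, so $K_B(z,z)=\sum_n|h_n(z)|^{2}>0$ for any orthonormal basis $\{h_n\}$. Therefore $\Delta R(x)=8\mathcal K(z,z)=2K_B(z,z)>0$.

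The main obstacle is the Bergman--Schiffer identity in the multiply connected case; for simply connected $\Omega$ it can be avoided entirely, since composing with the Riemann map $\phi\colon\Omega\to\mathbb{D}$ yields the closed form $R(x)=\frac{1}{2\pi}\log\frac{|\phi'(x)|}{1-|\phi(x)|^{2}}$, whose Laplacian is computed directly as $\Delta R(x)=\frac{2|\phi'(x)|^{2}}{\pi(1-|\phi(x)|^{2})^{2}}>0$ (using that $\log|\phi'|$ is harmonic and $\Delta\log(1-|\phi|^{2})=-\frac{4|\phi'|^{2}}{(1-|\phi|^{2})^{2}}$). For a general bounded domain one either cites the classical identity or, equivalently, verifies by a direct Green's-identity computation that $\mathcal K(z,\cdot)$ is a reproducing kernel on $L^2_{\mathrm{hol}}(\Omega)$, from which the strict positivity of $\mathcal K(z,z)$ follows by Cauchy--Schwarz.
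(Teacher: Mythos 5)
Your proof is correct and follows essentially the same route as the paper's: both reduce the claim to the strict positivity of the Bergman kernel on the diagonal. The paper simply cites Bandle--Flucher for the identity $\Delta R = 2K(x,x)$ (and Gustafsson for the Liouville equation $\Delta R = 8\pi e^{2\pi R}$ in the simply connected case), whereas you re-derive the Bergman--Schiffer relation by computing $\Delta R = 8\,\partial_z\partial_{\bar w}H(z,z)$ directly via Wirtinger derivatives and harmonicity of $H$ in each slot; the constants check out ($K_B = -4\,\partial_z\partial_{\bar w}G$ with the normalization $G = -\tfrac{1}{2\pi}\log|z-w| - H$ gives $\Delta R = 2K_B(z,z)$). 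This is a more self-contained presentation of the same idea, at the cost of carrying out the computation the paper delegates to a reference.
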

\begin{proof} 
	If  $\Omega$ is a simply connected domain one has (see e.g. \cite{Gustafsson})
	\[
	\Delta R=8\pi e^{2\pi R}\quad\mbox{ in }\Omega,
	\]
	so \eqref{Robinequation} immediately follows. More in general in \cite{BandleFlucher} it is proved that in any bounded domain 
	\[
	\Delta R=2K (x,x)\quad\mbox{ in }\Omega,
	\]
	where $K(x,y)$ is the Bergman kernel function. Then \eqref{Robinequation} follows immediately from the following characterization (see \cite{BergmanBOOK}):
	\[K(x,x)=\sup\{|f(x)|^2\ :\  f\in L^2(\Omega), \ \|f\|_{L^2(\Omega)}\leq 1, \ f \mbox{ is holomorfic in }\Omega\}.\]
\end{proof}
\begin{lemma} [\cite{CaffarelliFriedman}]\label{thm:Robin2}
	If $\Omega$ is any bounded convex domain, then $R$ is strictly convex and so it has a unique critical point which is a strict minimum. Moreover the corresponding Hessian matrix $D^2R$ in the point is positive definite.
\end{lemma}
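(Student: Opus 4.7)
The plan is first to establish that $R$ is strictly convex on $\Omega$; the remaining two conclusions then follow immediately, since a strictly convex $C^2$ function on a bounded convex open set that blows up at the boundary admits a unique interior minimum at which the Hessian is positive definite. Recall from the proof of Lemma~\ref{thm:Robin} that on a simply connected bounded domain $R$ satisfies the Liouville-type equation
\[
\Delta R = 8\pi e^{2\pi R}\quad\mbox{in }\Omega,
\]
together with $R(x)\to+\infty$ as $x\to\partial\Omega$. A bounded convex planar domain is automatically simply connected, so we are in this setting.

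To prove the strict convexity, I would apply a constant-rank theorem to the Hessian $D^2R$. Let $\lambda_{\min}(x)$ denote the smallest eigenvalue of $D^2R(x)$. Differentiating the Liouville equation twice and using the fact that the nonlinearity $f(R)=8\pi e^{2\pi R}$ is log-linear (so in particular $\log f$ is concave in $R$), one derives a degenerate elliptic differential inequality for $\lambda_{\min}$; the classical constant-rank principle then implies that either $D^2R$ has constant rank throughout $\Omega$, or $\lambda_{\min}$ attains no interior minimum. The boundary asymptotics $R(x)\sim -\tfrac{1}{2\pi}\log\mathrm{dist}(x,\partial\Omega)$ on the convex set $\partial\Omega$, together with the existence of a supporting line at each boundary point, allows one to construct one-dimensional barriers showing that $D^2R$ cannot have rank $\le 1$ at any interior point; hence $\lambda_{\min}>0$ everywhere, i.e.\ $R$ is strictly convex.

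The main obstacle will be this upgrade from convexity to strict convexity: the constant-rank principle by itself only yields that $D^2R$ has constant rank $\in\{0,1,2\}$ on $\Omega$, and excluding the ranks $0$ and $1$ requires carefully exploiting both the Liouville equation and the convexity of $\Omega$ near $\partial\Omega$. A more two-dimensional alternative would be to represent $R$ via the Riemann map $f:\mathbb{D}\to\Omega$ and the conformal radius $\rho(x)=|f'(f^{-1}(x))|(1-|f^{-1}(x)|^2)$ as $R=-\tfrac{1}{2\pi}\log\rho$, and then deduce strict log-concavity of $\rho$ on convex $\Omega$ from classical results on conformal maps of convex domains. Either route completes the proof of the lemma, and the uniqueness of the critical point plus positive definiteness of $D^2R$ at it are then automatic.
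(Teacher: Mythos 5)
The paper does not prove this lemma; it simply cites Caffarelli--Friedman \cite{CaffarelliFriedman}, so the question is whether your sketch faithfully reconstructs an argument of that type. It does not: the central logical step is misstated. The constant-rank theorem (Caffarelli--Friedman; Korevaar--Lewis) takes as a \emph{hypothesis} that the solution is already convex, $D^2R\geq 0$, and concludes that $\operatorname{rank} D^2R$ is constant in $\Omega$; it is not the device that establishes $D^2R\geq 0$ in the first place. Your paraphrase ``either $D^2R$ has constant rank, or $\lambda_{\min}$ attains no interior minimum'' is not a theorem: the differential inequality obtained by twice differentiating $\Delta R = 8\pi e^{2\pi R}$ only closes, in the maximum-principle computation for $\lambda_{\min}$ (or for $\det D^2R$), on the set where $D^2R\geq 0$ is already known; without that sign information the remainder terms have no definite sign and no conclusion follows.

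The missing ingredient is therefore the a priori convexity $D^2R\geq 0$, which is precisely the main content of \cite{CaffarelliFriedman}: one applies Korevaar's concavity maximum principle to the function $\mathcal C(x,y)=R\bigl(\tfrac{x+y}{2}\bigr)-\tfrac12 R(x)-\tfrac12 R(y)$ on $\Omega\times\Omega$, using the structural condition that $1/f$ is convex (here $f(R)=8\pi e^{2\pi R}$, so $1/f=\tfrac1{8\pi}e^{-2\pi R}$) to exclude an interior positive maximum of $\mathcal C$, together with the boundary blow-up $R\to+\infty$ and the convexity of $\Omega$ to control $\mathcal C$ near $\partial(\Omega\times\Omega)$. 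Only once $D^2R\geq 0$ is in hand does the constant-rank theorem apply, and only then does your barrier idea correctly rule out ranks $0$ and $1$ via the blow-up of $R$ at $\partial\Omega$. (A variant packages this as a deformation argument from the disk through convex domains, using openness of strict convexity and the constant-rank theorem for closedness; but this still uses the constant-rank theorem in its correct, convexity-assuming form.) Your conformal-radius alternative $R=-\tfrac1{2\pi}\log\rho$ is a legitimate parallel route, but strict log-concavity of $\rho$ on convex planar domains is a statement of the same strength, so it does not by itself circumvent the gap.
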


Furthermore we list some computations derived in \cite{GladialiGrossiCPDE} and \cite{GladialiGrossi}, see in particular \cite[Lemma 3.4]{GladialiGrossi}.
\begin{lemma}
	For any $y\in\Omega$:
	\begin{equation}
	\label{Green1}
	\int_{\partial\Omega}(x-y)\cdot \nu(x) \left(\frac{\partial G}{\partial \nu}(x,y)\right)^2 d\sigma_x=\frac1{2\pi};
	\end{equation}
	\begin{equation}
	\label{Green2}
	2\int_{\partial\Omega} (x-y)\cdot \nu(x)\frac{\partial G}{\partial \nu}(x,y) d\sigma_x=\frac{\partial R}{\partial y_j}(y);
	\end{equation}
	\begin{equation}
	\label{Robin1}
	\int_{\partial\Omega}\nu_j(x) \left(\frac{\partial G}{\partial \nu}(x,y)\right)^2 d\sigma_x=\frac{\partial R}{\partial y_j}(y);
	\end{equation}
	\begin{equation}
	\label{Robin2}
	\int_{\partial\Omega} \frac{\partial G}{\partial x_j}(x,y) \frac{\partial}{\partial y_k}\left(\frac{\partial G}{\partial \nu}(x,y)\right) d\sigma_x=\frac12\frac{\partial^2 R}{\partial y_j \partial y_k}(y).
	\end{equation}
\end{lemma}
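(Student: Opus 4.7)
The plan is to derive all four identities by combining the decomposition $G(x,y)=-\tfrac{1}{2\pi}\log|x-y|-H(x,y)$ with classical integration-by-parts identities of Pohozaev/Rellich type, applied on the punctured domain $\Omega_\rho:=\Omega\setminus\overline{B_\rho(y)}$ where $G(\cdot,y)$ is smooth and harmonic, followed by the limit $\rho\to 0^+$. The contributions from the outer boundary $\partial\Omega$ reproduce the surface integrals appearing on the left-hand sides, while the contributions concentrated on the inner circle $\partial B_\rho(y)$ extract either the universal constant $\tfrac{1}{2\pi}$ (from the normalization of the fundamental solution of $-\Delta$) or the first/second $y$-derivatives of the regular part $H$ along the diagonal. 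The latter are identified with the first/second derivatives of $R$ via $R(y)=H(y,y)$ together with the symmetry $H(x,y)=H(y,x)$.

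For \eqref{Green1} I would apply the Pohozaev identity to $u(x):=G(x,y)$ with the vector field $X(x)=x-y$ on $\Omega_\rho$: since $\Delta u=0$ in $\Omega_\rho$, one obtains
\[
\tfrac12\int_{\partial\Omega_\rho}(X\cdot\nu)|\nabla u|^2\,d\sigma \;=\; \int_{\partial\Omega_\rho}(X\cdot\nabla u)\,\partial_\nu u\,d\sigma.
\]
On $\partial\Omega$, the condition $u=0$ forces $\nabla u=(\partial_\nu u)\,\nu$, so the two boundary terms on that portion combine into $-\tfrac12\int_{\partial\Omega}(x-y)\cdot\nu\,(\partial_\nu G)^2\,d\sigma$. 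On $\partial B_\rho(y)$ the outward normal (with respect to $\Omega_\rho$) is $\nu=-(x-y)/\rho$, so $X\cdot\nu=-\rho$; substituting the expansion $\nabla_x G(x,y)=-\tfrac{1}{2\pi}\tfrac{x-y}{|x-y|^2}-\nabla_x H(x,y)$ into both terms and expanding the resulting quadratic form, the leading singular contributions conspire to give the limit $-\tfrac{1}{2\pi}$ as $\rho\to 0^+$, while the cross terms vanish by the symmetry of integration on $\partial B_\rho(y)$ and the regular terms vanish by dimensional reasons. Rearranging yields \eqref{Green1}.

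Identity \eqref{Robin1} is the classical Hadamard-type variational formula: differentiating $R(y)=H(y,y)$ and using the symmetry $H(x,y)=H(y,x)$ gives $\partial_{y_j}R(y)=2\,\partial_{x_j}H(x,y)|_{x=y}$, and this quantity is computed by applying Green's second identity on $\Omega_\rho$ to the pair $u=G(\cdot,y)$, $v=\partial_{y_j}G(\cdot,y)$; the singular contribution on $\partial B_\rho(y)$ extracts $2\partial_{x_j}H(y,y)$ while the $\partial\Omega$ contribution yields $\int_{\partial\Omega}\nu_j(\partial_\nu G)^2\,d\sigma$. Identity \eqref{Green2} is obtained by an analogous Pohozaev argument combining the normalization that produced \eqref{Green1} with a first-order $y$-derivative of $G$. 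Finally, \eqref{Robin2} is derived either by differentiating \eqref{Robin1} with respect to $y_k$ and pulling the derivative inside the boundary integral, or by the Green's identity approach applied directly to the pair $\partial_{y_j}G(\cdot,y),\partial_{y_k}G(\cdot,y)$; in either case one uses at the end that $G(\cdot,y)\equiv 0$ on $\partial\Omega$ implies $\partial_{x_j}G(\cdot,y)=\nu_j\,\partial_\nu G(\cdot,y)$ on $\partial\Omega$, which recasts the integrand into the claimed form.

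The main obstacle will be to control rigorously the passage $\rho\to 0^+$ around the logarithmic singularity: the integrands on $\partial B_\rho(y)$ blow up like $\rho^{-1}$ or $\rho^{-2}$ depending on how many derivatives of $G$ appear, and only after a full Laurent-type expansion of $G$ and its $x$-derivatives near the diagonal, together with the systematic cancellation of divergent terms against the measure-of-circle weight $\sim\rho$, do the finite limits and the derivatives of $R$ emerge. Once this expansion is carried out (legitimate since $H$ is smooth away from the diagonal and sufficiently regular up to it), the remaining manipulations are routine integrations by parts and differentiations under the integral sign.
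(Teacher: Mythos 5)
The paper does not actually prove this lemma: the identities are taken verbatim from \cite{GladialiGrossiCPDE} and \cite[Lemma 3.4]{GladialiGrossi}, so there is no in-text argument to compare against. That said, your strategy --- Rellich/Pohozaev identities on the punctured domain $\Omega_\rho=\Omega\setminus\overline{B_\rho(y)}$ for harmonic $G(\cdot,y)$, with the decomposition $G=-\tfrac1{2\pi}\log|x-y|-H$, the relation $\nabla_x G=(\partial_\nu G)\nu$ on $\partial\Omega$, and a careful $\rho\to0^+$ limit that extracts $\tfrac{1}{2\pi}$ or derivatives of $H$ along the diagonal --- is exactly the standard method, and it is the one used in the cited reference. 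Your proposal is correct in substance.

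Three remarks worth fixing in a full write-up. First, a bookkeeping slip in \eqref{Green1}: with the identity $\int_{\partial\Omega_\rho}\bigl[(X\cdot\nabla u)\partial_\nu u-\tfrac12(X\cdot\nu)|\nabla u|^2\bigr]\,d\sigma=0$ ($N=2$, $X=x-y$, $u$ harmonic) the outer boundary term simplifies to $+\tfrac12\int_{\partial\Omega}(x-y)\cdot\nu\,(\partial_\nu G)^2\,d\sigma$ (not $-\tfrac12$), and the inner circle contributes $-\tfrac1{4\pi}$ (not $-\tfrac1{2\pi}$); the constants then close correctly. Second, for \eqref{Robin1} the cleanest derivation is the \emph{translation} Pohozaev identity with the constant field $X=e_j$, which yields $0=\int_{\partial\Omega_\rho}\bigl[\partial_j u\,\partial_\nu u-\tfrac12\nu_j|\nabla u|^2\bigr]\,d\sigma$ and directly produces $2\partial_{x_j}H(y,y)=\partial_{y_j}R(y)$ from the inner circle; your proposed Green's second identity with $u=G(\cdot,y)$ and $v=\partial_{y_j}G(\cdot,y)$ does not obviously extract the diagonal derivative, since both $u$ and $v$ vanish on $\partial\Omega$ and $\int_{\partial\Omega}(u\partial_\nu v-v\partial_\nu u)$ is trivially zero there. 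Your alternative route for \eqref{Robin2} --- differentiate \eqref{Robin1} in $y_k$ and use $\partial_{x_j}G=\nu_j\partial_\nu G$ on $\partial\Omega$ --- is correct. Third, note that \eqref{Green2} as printed in the paper has a misprint: the integrand is missing the factor $\tfrac{\partial^2 G}{\partial y_j\partial\nu}(x,y)$, which also explains the dangling index $j$ on the right-hand side; the identity actually used later (in the display leading to the Robin-function terms) is $2\int_{\partial\Omega}(x-y)\cdot\nu\,\tfrac{\partial G}{\partial\nu}\,\tfrac{\partial^2 G}{\partial y_j\partial\nu}\,d\sigma_x=\tfrac{\partial R}{\partial y_j}(y)$, which is obtained by differentiating \eqref{Green1} in $y_j$ once one notices that the explicit $y$-dependence of $(x-y)\cdot\nu$ contributes $-\int_{\partial\Omega}\nu_j(\partial_\nu G)^2=-\partial_{y_j}R$ and is absorbed by \eqref{Robin1}. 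Your sketch for \eqref{Green2} is too vague to assess on its own, but given the typo in the statement that is understandable.
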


\

\subsection{Asymptotic behavior of multi-spike solutions}

The complete analysis of the asymptotic behavior as $p\rightarrow +\infty$ of the solutions to \eqref{problem}  has been done in \cite{DIPpositive} and refined in \cite{DGIPsqrte}. The results can be summarized in the following:

\begin{theorem}[\cite{DIPpositive, DGIPsqrte}]
	\label{teoAsymptotic} 
	Let $(u_p)$ be a family of solutions to \eqref{problem}  and assume that \eqref{energylimit} holds.
	Then there exist a finite number $k$   of distinct  points $x_{j,\infty}\in\Omega$, $j=1,\ldots, k$   and a sequence $p_n\rightarrow +\infty$ as $n\rightarrow +\infty$
	such that setting \[\mathcal S:=\{x_{1,\infty},\ldots, x_{k,\infty}\}\]
	one has
	\begin{equation}\label{pu_va_a_zeroTeo}
	\lim_{n\rightarrow +\infty}	\sqrt{p_n}u_{p_n}=0\ \mbox{ in $C^2_{loc}(\overline\Omega\setminus\mathcal{S})$;}
	\end{equation}
	\begin{equation}\label{convergenzapup}
	\lim_{n\rightarrow +\infty}p_nu_{p_n}(x)= 8\pi \sqrt{e} \sum_{j=1}^k G(x,x_{j,\infty})\  \mbox{ in } 
	C^2_{loc}(\overline\Omega\setminus\mathcal S), 
	\end{equation}
	the energy satisfies
	\begin{equation}
	\label{convergenzaenergia}
	\lim_{n\rightarrow +\infty}	p_n\int_\Omega |\nabla u_{p_n}(x)|^2\,dx= 8\pi e\cdot k
	\end{equation}
	and the concentration points $x_{j,\infty}, \ j=1,\ldots, k$ fulfill
	the system 
	\begin{equation}\label{x_j relazioneTeo}
	-\nabla_x H(x_{j,\infty},x_{j,\infty})+\sum_{j\neq h}\nabla_x G(x_{j,\infty},x_{h,\infty})=0.		
	\end{equation}
	Furthermore, for $r>0$  such that $B_{3r}(x_{j,\infty})\subset\Omega$, $\overline{B_{2r}(x_{j,\infty})}\cap \overline{B_{2r}(x_{h,\infty})}=\emptyset$, for any $j, h=1,\ldots, k$, $h\neq j$. Let $x_{j,n}\in\overline{B_{2r}(x_{j,\infty})}$	be the sequence defined as 
	\begin{equation}
	\label{ptimaxloc}
	u_{p_n}(x_{j,n})=\max_{\overline{B_{2r}(x_{j,\infty})}}u_{p_n}
	\end{equation}
	then, for any $j=1,\ldots, k$,
	\begin{equation}\label{convptimax}
	\lim_{n\rightarrow +\infty}x_{j,n}= x_{j,\infty},
	\end{equation}
	\begin{equation}\label{misqrte}
	\lim_{n\rightarrow +\infty} u_{p_n}(x_{j,n})=\sqrt{e},
	\end{equation}
	\begin{equation}
	\label{varep}
	\lim_{n\rightarrow +\infty}\varepsilon_{j,n}:=\left[ p_n u_{p_n}(x_{j,n})^{p_n-1}\right]^{-1/2}= 0\end{equation}
	and setting
	\begin{equation}
	\label{defRiscalataMax}
	w_{j,n}(y):=\fr{p_n}{u_{p_n}(x_{j,n})}(u_{p_n}(x_{j,n}+\varepsilon_{j,n} y)-u_{p_n}(x_{j,n})),\quad  y\in  \Omega_{j,n}:=\frac{\Omega-x_{j,n}}{\varepsilon_{j,n}},
	\end{equation}
	one has
	\begin{equation}\label{convRiscalateNeiMax}
	\lim_{n\rightarrow +\infty} w_{j,n}=U\ \mbox{ in }\ C^2_{loc}(\R^2)
	\end{equation}
	where
	\begin{equation}\label{definizioneU}
	U(x)=-2\log\left(1+\frac{ |x|^2}{8}\right)
	\end{equation}
	is a solution of the Liouville equation 
	\begin{equation}\label{LiouvilleEquationINTRO}
	\left\{
	\begin{array}{lr}
	-\Delta U=e^U\quad\mbox{ in }\R^2\\
	\int_{\R^2}e^Udx= 8\pi.
	\end{array}
	\right.
	\end{equation}
	Moreover there exists $C>0$ such that:
	\begin{equation}\label{P3}
	p_n\min_{j=1,\ldots,k}|x-x_{j,n}|^2|u_{p_n}(x)|^{p_n-1}\leq C\qquad\mbox{ for any } x\in\Omega \mbox{ and for any }n
	\end{equation}
	and
	\begin{equation}\label{P4}
	p_n\min_{j=1,\ldots,k}|x-x_{j,n}||\nabla u_{p_n}(x)|\leq C\qquad\mbox{for any }x\in\Omega\mbox{ and for any }n.
	\end{equation}
\end{theorem}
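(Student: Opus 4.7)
The plan is to combine a concentration-compactness reduction with a blow-up analysis at the local maxima and a matched asymptotic expansion between the inner Liouville profile and an outer Green-function representation. First, using the energy bound \eqref{energylimit} together with a Moser-Trudinger argument, one shows that $u_p\to 0$ in $H^1_0(\Omega)$ while $pu_p^p$ stays bounded in $L^1(\Omega)$. Extracting a subsequence $p_n\to+\infty$, the non-negative measures $p_n u_{p_n}^{p_n}\,dx$ converge weakly-$\ast$ to a finite combination $\sum_{j=1}^k\alpha_j\delta_{x_{j,\infty}}$; a standard $\varepsilon$-regularity argument (any quantum of mass below a threshold depending only on $8\pi$ cannot concentrate) rules out accumulation of blow-up points and gives $k<+\infty$. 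Away from $\mathcal S$ elliptic bootstrap upgrades the convergence to $C^2_{loc}$ and in fact yields $\sqrt{p_n}\,u_{p_n}\to 0$, which is \eqref{pu_va_a_zeroTeo}.

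Next, at each concentration point I would select the local max $x_{j,n}$ and perform the scaling \eqref{defRiscalataMax} with $\varepsilon_{j,n}$ as in \eqref{varep}. The rescaled $w_{j,n}$ satisfies $-\Delta w_{j,n}=(1+w_{j,n}/p_n)^{p_n}$ with $w_{j,n}(0)=0=\max w_{j,n}$, so elliptic compactness gives $C^2_{loc}(\R^2)$ subconvergence to an entire solution $U$ of $-\Delta U=e^U$ with mass $\leq 8\pi$ by lower semicontinuity from \eqref{energylimit}; the Chen-Li classification then identifies $U$ as in \eqref{definizioneU} with $\int_{\R^2}e^U=8\pi$, proving \eqref{convRiscalateNeiMax}. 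The precise constants $\sqrt e$ in \eqref{misqrte} and $8\pi\sqrt e$ in \eqref{convergenzapup} are obtained through the Green representation $u_{p_n}(x)=\int_\Omega G(x,y)\,u_{p_n}(y)^{p_n}\,dy$: the inner contribution on $B_{R\varepsilon_{j,n}}(x_{j,n})$ produces, via \eqref{convRiscalateNeiMax} and $\int e^U=8\pi$, a factor $8\pi\cdot u_{p_n}(x_{j,n})(1+o(1))$ times $G(x,x_{j,\infty})$, while the outer part is negligible; matching the resulting formula back at $x=x_{j,n}$ gives an algebraic self-consistency condition whose unique solution is $u_{p_n}(x_{j,n})\to\sqrt e$, from which \eqref{convergenzaenergia} follows since $\int_\Omega|\nabla u_{p_n}|^2=\int_\Omega u_{p_n}^{p_n+1}$. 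The location relation \eqref{x_j relazioneTeo} comes from applying a local Pohozaev identity on $B_r(x_{j,\infty})$ to $u_{p_n}$, passing to the limit in $n$ using \eqref{convergenzapup}, then letting $r\to 0$.

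Finally, the pointwise bounds \eqref{P3}-\eqref{P4} are obtained by contradiction: if they failed along a sequence $x_n$ with $p_n|x_n-x_{j,n}|^2 u_{p_n}(x_n)^{p_n-1}\to+\infty$, rescaling at $x_n$ at the scale $(p_n u_{p_n}(x_n)^{p_n-1})^{-1/2}$ would produce a second independent bubble, contradicting either the finiteness of $\mathcal S$ or the maximality of $x_{j,n}$ on $\overline{B_{2r}(x_{j,\infty})}$. The main obstacle is the sharp identification of $\sqrt e$ and $8\pi\sqrt e$: because $\varepsilon_{j,n}$ is \emph{exponentially} small in $p_n$, the self-consistency loop between $\|u_{p_n}\|_\infty$ and the Dirac masses $\alpha_j$ is quite rigid and requires a careful next-order matched asymptotic expansion keeping track of the interaction of all $k$ bubbles with each other and with the boundary; this is precisely the refinement carried out in \cite{DIPpositive, DGIPsqrte}, beyond the single-bubble least-energy analyses of \cite{AdimurthiGrossi, RenWeiTAMS1994, RenWeiPAMS1996}.
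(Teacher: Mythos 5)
Theorem \ref{teoAsymptotic} is not proved in this paper: it is imported and attributed to \cite{DIPpositive, DGIPsqrte}, as the theorem header and the sentence immediately preceding it (``The complete analysis of the asymptotic behavior \dots has been done in \cite{DIPpositive} and refined in \cite{DGIPsqrte}'') make explicit. There is therefore no internal proof for your sketch to be measured against. Treating your outline as a reconstruction of the cited arguments, it does capture the correct global strategy: concentration--compactness selection of finitely many spikes, local blow-up to a Liouville profile with Chen--Li classification, Green representation to pin down the constants, a local Pohozaev identity for the location relation \eqref{x_j relazioneTeo}, and pointwise decay of Druet type.

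Two points nevertheless need correction. (i) The proposed logical order is circular. You prove \eqref{P3}--\eqref{P4} \emph{last}, by a rescaling contradiction that presupposes the bubble decomposition is already complete, yet the $C^2_{loc}(\R^2)$ convergence \eqref{convRiscalateNeiMax} and the dominated-convergence passages that turn $(1+w_{j,n}/p_n)^{p_n}$ into $e^U$ already rely on pointwise decay of exactly that type (compare Lemma \ref{lemma:lebesgue} in this paper, which invokes \cite[Lemma 4.4, Proposition 4.3]{DIPpositive}). In \cite{DIPpositive} the estimates \eqref{P3}--\eqref{P4} are obtained \emph{together with} the selection of concentration points through an exhaustion/induction argument, not as a corollary of the finished picture. (ii) ``Mass $\le 8\pi$ by lower semicontinuity'' misattributes the roles: the Chen--Li classification forces $\int_{\R^2}e^U=8\pi$ for any entire finite-mass solution, so the $8\pi$ is automatic once finiteness is known; what Fatou together with a bound like $p\int_\Omega u_p^{p-1}\,dx\le C$ (cf.\ \eqref{bound_int_p-1}) actually supplies is finiteness of $\int_{\R^2}e^U$, and the energy bound \eqref{energylimit} instead controls the \emph{number} $k$ of bubbles. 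The remaining delicate step, the sharp identification $u_{p_n}(x_{j,n})\to\sqrt e$ and the constant $8\pi\sqrt e$ in \eqref{convergenzapup}, you correctly flag as the hard technical core and defer to \cite{DIPpositive, DGIPsqrte}; your sketch would not, as written, produce it.
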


\

\begin{remark}	
		It is easy to see that \eqref{energylimit} implies 
		\begin{equation}
		\label{boundLoo}
		\|u_p\|_{L^{\infty}(\Omega)}\leq \widetilde C,
		\end{equation}
		for some uniform constant $\widetilde C>0$. Besides from \eqref{pu_va_a_zeroTeo} and \eqref{misqrte} we have that for the whole family $u_p$
		\begin{equation}
		\label{sqrte}
		\lim_{p\rightarrow +\infty}\|u_p\|_{L^{\infty}(\Omega)}=\sqrt{e}.
		\end{equation}
\end{remark}
Next we collect other useful properties from \cite{DIPpositive}. Throughout the section  $(u_p)$ is a family of solutions to \eqref{problem} which satisfies the uniform bound \eqref{energylimit} and we keep the notations of Theorem \ref{teoAsymptotic}.   
\begin{lemma}
	For any $\alpha\in\R$ and any compact set $\omega\subset\Omega\setminus \mathcal S$
	\begin{equation}
	\label{utile}
	\lim_{n\rightarrow +\infty} p_n^{\alpha}\|u_{p_n}\|^{p_n-1}_{L^\infty(\omega)}=0.
	\end{equation}
	Moreover there exists $C>0$ such that
	\begin{equation}
	\label{bound_int_p-1}
	p\int_\Omega u_p^{p-1} dx\leq C.
	\end{equation}
\end{lemma}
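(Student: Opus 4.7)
The lemma combines two independent estimates, which I would prove separately.

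For the compact-set estimate \eqref{utile}, the starting point is the $C^0$-convergence $\sqrt{p_n}\,u_{p_n}\to 0$ on any compact set $\omega\subset\overline\Omega\setminus\mathcal S$, furnished by \eqref{pu_va_a_zeroTeo}. Fix $\eta\in(0,1)$; then for $n$ large enough, $\|u_{p_n}\|_{L^\infty(\omega)}\leq \eta/\sqrt{p_n}$, so
\[
p_n^\alpha\,\|u_{p_n}\|_{L^\infty(\omega)}^{p_n-1}\;\leq\; \eta^{p_n-1}\, p_n^{\alpha-(p_n-1)/2}.
\]
Taking logarithms, the exponent is $(p_n-1)\log\eta+\bigl(\alpha-\tfrac{p_n-1}{2}\bigr)\log p_n$, and this tends to $-\infty$ for every $\alpha\in\R$ because the term $-\tfrac{p_n-1}{2}\log p_n$ dominates both the polynomial $\alpha\log p_n$ and the linear $(p_n-1)\log\eta$. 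This gives \eqref{utile}.

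For the integral bound \eqref{bound_int_p-1}, I would combine \eqref{energylimit} with H\"older's inequality. Multiplying $-\Delta u_p=u_p^{p}$ by $u_p$ and integrating by parts (using that $u_p$ vanishes on $\partial\Omega$),
\[
\int_\Omega u_p^{p+1}\,dx \;=\; \int_\Omega |\nabla u_p|^2\,dx,
\]
so \eqref{energylimit} yields $p\int_\Omega u_p^{p+1}\,dx\leq C$. Applying H\"older's inequality to $u_p^{p-1}\cdot 1$ with conjugate exponents $s=\frac{p+1}{p-1}$ and $s'=\frac{p+1}{2}$,
\[
\int_\Omega u_p^{p-1}\,dx \;\leq\; \Bigl(\int_\Omega u_p^{p+1}\,dx\Bigr)^{\frac{p-1}{p+1}}\,|\Omega|^{\frac{2}{p+1}},
\]
and multiplying by $p$,
\[
p\int_\Omega u_p^{p-1}\,dx \;\leq\; p^{\frac{2}{p+1}}\,C^{\frac{p-1}{p+1}}\,|\Omega|^{\frac{2}{p+1}}.
\]
Each of the three factors on the right is continuous in $p\in(1,\infty)$ and admits a finite limit as $p\to\infty$ (namely $1$, $C$, and $1$), so the right-hand side is uniformly bounded.

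Neither part presents a real obstacle. The first statement is an elementary consequence of a $C^0$-bound already in hand; the second is a one-line H\"older interpolation between the available $L^{p+1}$ energy control and the trivial $L^0$ bound. Both estimates are auxiliary book-keeping tools that will be reused in the later asymptotic analysis of the linearized operator.
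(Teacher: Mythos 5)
Your proposal is correct and follows essentially the same route as the paper: \eqref{utile} is read off from \eqref{pu_va_a_zeroTeo}, and \eqref{bound_int_p-1} comes from H\"older with exponents $\tfrac{p+1}{p-1}$ and $\tfrac{p+1}{2}$ combined with $\int_\Omega u_p^{p+1}=\int_\Omega|\nabla u_p|^2$ and \eqref{energylimit}. In fact your write-up of the second estimate is slightly more careful than the paper's display, which asserts $\bigl(\int_\Omega|\nabla u_p|^2\bigr)^{\frac{p-1}{p+1}}\leq\int_\Omega|\nabla u_p|^2$ even though $\int_\Omega|\nabla u_p|^2\to 0$ (so that inequality actually goes the other way for large $p$); your direct substitution $\int_\Omega u_p^{p+1}\leq C/p$ followed by the observation that $p^{2/(p+1)}$ stays bounded sidesteps that minor slip and reaches the same conclusion.
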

\begin{proof}
	\eqref{utile} follows immediately from \eqref{pu_va_a_zeroTeo} in Theorem \ref{teoAsymptotic}.
	\\Moreover
	\begin{eqnarray}
	p\int_\Omega u_p^{p-1} dx&\overset{\mbox{\footnotesize{H\"older}}}{\leq} & p|\Omega|^{\frac{2}{p+1}}\left(\int_\Omega u_p^{p+1}dx\right)^{\frac{p-1}{p+1}}\overset{\eqref{problem}}{\leq}  p \left(\int_\Omega |\nabla u_p|^2 dx\right)^{\frac{p-1}{p+1}}(1+o(1))\nonumber\\
	&\leq&p \left(\int_\Omega |\nabla u_p|^2 dx\right)(1+o(1))\overset{\eqref{energylimit}}{\leq} C.
	\end{eqnarray} 
\end{proof}

Finally we give pointwise decay estimates for $w_{j,n}$ which will be crucial to pass to the limit in various integral identities (see Remark \ref{rem:lebesgue} below).
\begin{lemma} \label{lemma:lebesgue}
	For any $\gamma>0$ there exist $R_{\gamma}>0$, $\widetilde C_{\gamma}$, $C_{\gamma}>0$ and $n_\gamma\in\N$ such that for any $n\geq n_\gamma$
	\begin{equation}
	\label{prelebesgue}
	w_{j,n}(z)\leq \left(4-\frac{\gamma}{2}\right)\log\frac{1}{|z|}+\widetilde C_\gamma\qquad\mbox{for }R_\gamma\leq |z|\leq\frac{r}{\varepsilon_{j,p}}
	\end{equation}
	and
\begin{equation}
\label{lebesgue}
	0\leq \left(1+\frac{w_{j,n}(z)}{p_n}\right)^{p_n-1}\leq 
	\frac{ C_\gamma}{1+|z|^{4-\gamma}}\qquad \mbox{for }|z|\leq \frac{r}{\varepsilon_{j,n}}.
\end{equation} 
\end{lemma}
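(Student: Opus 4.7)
By the definition \eqref{defRiscalataMax} of $w_{j,n}$ together with \eqref{varep}, the rescaled function solves
\[
-\Delta w_{j,n}=\Bigl(1+\frac{w_{j,n}}{p_n}\Bigr)^{p_n}\quad\text{in }\Omega_{j,n},
\]
with $w_{j,n}\le 0$ on $B_{r/\varepsilon_{j,n}}(0)$ by \eqref{ptimaxloc}. The target \eqref{prelebesgue} mirrors the decay of the Liouville profile $U(z)=-2\log(1+|z|^2/8)\le-4\log|z|+2\log 8$ from \eqref{definizioneU}. My plan is to establish \eqref{prelebesgue} first and then to derive \eqref{lebesgue} from it by an elementary computation.

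For \eqref{prelebesgue}, I split the range $[R_\gamma,r/\varepsilon_{j,n}]$ of $|z|$ into a bounded piece and a tail. On the bounded piece $|z|\le R_\gamma+1$, the $C^2_{\rm loc}$-convergence \eqref{convRiscalateNeiMax} together with the logarithmic decay of $U$ yields \eqref{prelebesgue} for $n$ large, provided $R_\gamma$ is chosen large enough so that the constant $2\log 8$ is absorbed into the $\gamma/2$ slack. On the tail $R_\gamma<|z|\le r/\varepsilon_{j,n}$, rewriting \eqref{P3} in the rescaled variable $z=(x-x_{j,n})/\varepsilon_{j,n}$ and using $u_{p_n}^{p_n-1}(x_{j,n})=(p_n\varepsilon_{j,n}^2)^{-1}$ gives the weak polynomial estimate
\[
\Bigl(1+\frac{w_{j,n}(z)}{p_n}\Bigr)^{p_n-1}\le \frac{C}{|z|^2},
\]
which, after taking logarithms and using $p_n/(p_n-1)\to 1$, yields the preliminary decay $w_{j,n}(z)\le-2\log|z|+C$. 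To upgrade this to the sharp rate $4-\gamma/2$ I would bootstrap: the bound $w_{j,n}\le-\alpha\log|z|+C$ combined with $\log(1+t)\le t$ implies $(1+w_{j,n}/p_n)^{p_n}\le C|z|^{-\alpha}$, and via the radial average identity
\[
2\pi\rho\,\bar w_{j,n}'(\rho)=-\int_{B_\rho}\Bigl(1+\frac{w_{j,n}}{p_n}\Bigr)^{p_n}\,dy
\]
together with the per-bubble $L^1$-mass conservation $\int_{B_{r/\varepsilon_{j,n}}}(1+w_{j,n}/p_n)^{p_n}\,dy\to 8\pi$ (which follows from the Liouville limit and the energy quantization \eqref{convergenzaenergia}), integrating between $R_\gamma$ and $\rho$ gives $\bar w_{j,n}(\rho)\le-(4-\gamma/2)\log\rho+C$. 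A Harnack-type argument applied to the linearised equation (the coefficient $(1+w_{j,n}/p_n)^{p_n-1}\le C/|z|^2$ being small on annuli of bounded ratio) then promotes the radial estimate to the pointwise \eqref{prelebesgue}.

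From \eqref{prelebesgue}, the estimate \eqref{lebesgue} is elementary. For $|z|\le R_\gamma$ one has $(1+w_{j,n}/p_n)^{p_n-1}\le 1$, in turn bounded by $(1+R_\gamma^{4-\gamma})/(1+|z|^{4-\gamma})$. For $R_\gamma<|z|\le r/\varepsilon_{j,n}$, $\log(1+t)\le t$ combined with \eqref{prelebesgue} gives
\[
\Bigl(1+\frac{w_{j,n}}{p_n}\Bigr)^{p_n-1}\le e^{(1-1/p_n)w_{j,n}(z)}\le e^{(1-1/p_n)\widetilde C_\gamma}|z|^{-(4-\gamma/2)(1-1/p_n)}\le C_\gamma\,|z|^{-(4-\gamma)},
\]
the last inequality holding for $n$ so large that $(4-\gamma/2)(1-1/p_n)\ge 4-\gamma$, i.e.\ $p_n\ge 8/\gamma-1$, and then absorbing $|z|^{-(4-\gamma)}$ into $2/(1+|z|^{4-\gamma})$. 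The heart of the argument is the bootstrap upgrading the polynomial decay $|z|^{-2}$ coming from \eqref{P3} to the sharp $|z|^{-(4-\gamma)}$; it hinges on the per-bubble $L^1$-mass conservation inherited from the Liouville limit and on a radial-plus-Harnack analysis, with the $\gamma/2$ slack absorbing the errors in the mass convergence and in the passage from the $(p_n-1)$-th to the $p_n$-th power.
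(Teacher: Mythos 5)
Your derivation of \eqref{lebesgue} from \eqref{prelebesgue} is essentially the paper's: the only information used is that $w_{j,n}\le 0$ on $B_{r/\varepsilon_{j,n}}(0)$ (from \eqref{ptimaxloc} and \eqref{convptimax}), the elementary inequality $(1+t/p)^{p-1}\le e^{(1-1/p)t}$, and the choice of $n_\gamma$ so that $(4-\gamma/2)(1-1/p_n)\ge 4-\gamma$. So that half matches.

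The real divergence is on \eqref{prelebesgue}. The paper does not prove it at all; it simply quotes it from \cite[Lemma 4.4 and Proposition 4.3]{DIPpositive}. You instead try to give a self-contained proof, and your sketch has two genuine gaps.

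First, the ``preliminary decay'' step is not correct as stated. From $(1+w_{j,n}/p_n)^{p_n-1}\le C|z|^{-2}$ one gets $w_{j,n}\le p_n\bigl(\exp\bigl(\tfrac{\log C-2\log|z|}{p_n-1}\bigr)-1\bigr)$. Since $e^u-1\ge u$, this is \emph{weaker} than $w_{j,n}\le\frac{p_n}{p_n-1}(\log C-2\log|z|)$, and the linearization $e^u-1\approx u$ is only valid where $\tfrac{2\log|z|}{p_n-1}$ is small. On the tail $|z|\sim r/\varepsilon_{j,n}$ one has $\log|z|\sim\tfrac{p_n}{4}$ (since $u_{p_n}(x_{j,n})\to\sqrt e$), so $u\to -\tfrac12$ and the bound degenerates; you cannot conclude $w_{j,n}\le -2\log|z|+C$ from \eqref{P3} on the whole range. (This particular step turns out to be dispensable, since the radial-average argument only needs the mass lower bound, which follows from \eqref{convRiscalateNeiMax} alone --- but the claim as written is false.)

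Second, and more importantly, the passage from the radial-average estimate $\bar w_{j,n}(\rho)\le -(4-\tfrac\gamma2)\log\rho+C$ to the pointwise \eqref{prelebesgue} is not a routine ``Harnack-type argument.'' Here $-w_{j,n}\ge 0$ is \emph{subharmonic} (not a supersolution), so the available local estimate is $\sup_{B_{R/2}(z_0)}(-w_{j,n})\le\frac{C}{|B_R|}\int_{B_R(z_0)}(-w_{j,n})$ with a universal constant $C>1$; applied on an annulus of radius $\sim\rho$, this bounds $-w_{j,n}$ by $C\bigl[(4-\tfrac\gamma2)\log\rho+C'\bigr]$ and the multiplicative constant destroys the sharp rate $4-\tfrac\gamma2$ that \eqref{prelebesgue} requires. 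There is no small potential to play with here that restores constant $1$. The sharp pointwise decay in \cite{DIPpositive} is obtained by a different, substantially more delicate argument (Green's representation together with the concentration and quantization of the right-hand side on every scale), and cannot be recovered from the radial average by a generic local boundedness estimate. So your sketch does not close, whereas the paper avoids the issue entirely by invoking \cite{DIPpositive}.
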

\begin{proof}
\eqref{prelebesgue} follows directly from \cite[Lemma 4.4 and Proposition 4.3]{DIPpositive}.
\\
Next we derive \eqref{lebesgue}. Observe that by \eqref{convptimax} $B_r(x_{j,n})\subset B_{2r}(x_{j,\infty})$ for $n$ sufficiently large. As a consequence, by \eqref{ptimaxloc}, $w_{j,n}\leq0$ in $B_{\frac{r}{\varepsilon_{j,n}}}(0)\subset\Omega_{j,n}$ for $n$ large, which implies
\begin{equation}\label{ven}
\left(1+\frac{w_{j,n}(z)}{p_n}\right)^{p_n-1}\leq 1\qquad\mbox{for any $z\in B_{\frac{r}{\varepsilon_{j,n}}}(0)$}.
\end{equation}
Moreover, since  for $n$ sufficiently large $\frac{p_n-1}{p_n}\geq \frac{4-\gamma}{4-\frac\gamma2}$,  by \eqref{prelebesgue} we get
\begin{equation}\label{ven2}
\left(1+\frac{w_{j,n}(z)}{p_n}\right)^{p_n-1}=e^{(p_n-1)\log\left(1+\frac{w_{j,n}(z)}{p_n}\right)}\leq e^{\frac{p_n-1}{p_n} w_{j,n}(z)}\overset{w_{j,n}\leq 0}{\leq} e^{\frac{4-\gamma}{4-\frac{\gamma}{2}} w_{j,n}(z)}\overset{\eqref{prelebesgue}}{\leq}\frac{  \widehat C_{\gamma}}{|z|^{4-\gamma}}, 
\end{equation}
for $R_{\gamma}\leq 
	|z|\leq \frac{r}{\varepsilon_{j,n}}$ and for some $\widehat C_\gamma>0$. Combining \eqref{ven2} with \eqref{ven} we get the thesis.
\end{proof}

\

\begin{remark}
	\label{rem:lebesgue} 
By \eqref{convRiscalateNeiMax} and Lemma \ref{lemma:lebesgue} we can use the dominated convergence theorem and pass to limit proving that
	\[
\lim_{n\rightarrow +\infty}	\int_{D_n\cap B_{\frac{r}{\varepsilon_{j,n}}}(0)}\left(1+\frac{w_{j,n}(z)}{p_n}\right)^{p_n-1}g_n(z)\,dz
	=\int_{\R^2}e^{U(z)}g(z)\,dz,
	\]
	for any sequence of sets $D_n\subset\R^2$, $D_n\to \R^2$ and any sequence of  functions $g_n$ such that $g_n\to g$ pointwise in $\R^2$ as $n\rightarrow +\infty$ and $|g_n(z)|\leq h(z)$ for $|z|\leq\frac{r}{\varepsilon_{j,n}}$, where the function $h$ is such that $\frac{h(z)}{1+|z|^{4-\gamma}}\in L^1(\R^2)$, for some $\gamma >0$.
\end{remark}

\

\

{\bf Notation in the case $\mathbf{k=1}$.}
In this paper we focus on  solutions to \eqref{problem} for which $k=1$. In order to simplify the notation let us set:\\

\begin{itemize}
\item $x_\infty:=x_{1,\infty}$, which, by \eqref{x_j relazioneTeo}, is a critical point of the Robin function $R(x):=H(x,x)$;
\item $x_n:=x_{1,n}$ the local maximum point in \eqref{ptimaxloc}, which coincides now with the global maximum, namely
\begin{equation}\label{ptimaxloc1}
u_{p_n}(x_n)=\|u_{p_n}\|_{L^{\infty}(\Omega)};
\end{equation}
\item
$\varepsilon_n:=\varepsilon_{1,n}$  the parameter in \eqref{varep}, namely 
	\begin{equation}
\label{varep1}
\varepsilon_{n}:=\left[ p_n \|u_{p_n}\|_{L^{\infty}(\Omega)}^{p_n-1}\right]^{-1/2}(\rightarrow  0 \mbox{ as }n\rightarrow +\infty);\end{equation}
\item
$w_{n}:=w_{1,n}$ the rescaled function in \eqref{defRiscalataMax};
\item
$\Omega_n:=\Omega_{1,n}$ the rescaled domain in \eqref{defRiscalataMax}.
\end{itemize}

\

We will also use the following notation for this rescaling of $u_{p_n}$:
\begin{equation}\label{utilde}
\widetilde u_{p_n}(y)=u_{p_n}(x_n+\varepsilon_n y),\qquad\quad 	 \mbox{for $y\in\Omega_n$}.
\end{equation} 
Observe that 
	\begin{equation}
\label{boundLootilde}
\|\widetilde u_p\|_{L^{\infty}(\Omega)}=\|u_p\|_{L^{\infty}(\Omega)}\overset{\eqref{boundLoo}}{\leq} \widetilde C,
\end{equation} 
moreover, by \eqref{convRiscalateNeiMax} and \eqref{sqrte} 
\begin{equation}\label{convutilde}
\widetilde u_{p_n}=\|u_{p_n}\|_{L^{\infty}(\Omega)}\left(1+\frac{w_n}{p_n} \right)\longrightarrow \sqrt{e}\quad \mbox{ in }C^0_{loc}(\R^2) \mbox{, as }n\rightarrow +\infty,
\end{equation}
\begin{equation}\label{convnablautilde}
p_n\nabla \widetilde u_{p_n}=\|u_{p_n}\|_{L^{\infty}(\Omega)}\nabla w_n\longrightarrow \sqrt{e}\,\nabla U\quad \mbox{ in }C^0_{loc}(\R^2) \mbox{, as }n\rightarrow +\infty,
\end{equation}
Moreover \eqref{P4} becomes
	\begin{equation}\label{P4tilde}
p_n|y||\nabla \widetilde u_{p_n}(y)|\leq C\qquad\mbox{for any }y\in\Omega_n\mbox{ and for any }n.
\end{equation}

\

\

\subsection{Linearized equation at $1$-spike solutions: preliminary asymptotic results}
Let $(u_p)$ be a family of solutions to \eqref{problem}, satisfying the uniform bound \eqref{energylimit}, let $p_n$ be the sequence in Theorem \ref{teoAsymptotic} and assume that Theorem \ref{teoAsymptotic} holds with $k=1$. \\
Let us denote respectively by $\lambda_{i,n}$ and $v_{i,n}$, $i\in\N$,  the eigenvalues (counted with multiplicity) and the associated  eigenfunctions  of  the linearized problem
\begin{equation}
\label{eigenvalueProblem}
\left\{
\begin{array}{ll}
-\Delta v=\lambda p_nu_{p_n}^{p_n-1} v & \mbox{in }\Omega\\
v=0 & \mbox{on }\partial\Omega\\
\|v\|_{L^{\infty}(\Omega)}=1.
\end{array}
\right.
\end{equation}
We may assume that the eigenfunctions are orthogonal in the space $H^1_0(\Omega)$, i.e. 
\begin{equation}\label{autofunzOrtogonali}
\int_{\Omega}\nabla v_{i,n}\nabla v_{j,n}=0 \quad \forall j\neq i. 
\end{equation}
Taking $\lambda=\frac{1}{p_n}$ in \eqref{eigenvalueProblem} we have that $v_{1,n}=u_{p_n}/\|u_{p_n}\|_{L^{\infty}(\Omega)}$ is a corresponding eigenfunction and hence $\frac{1}{p_n}=\lambda_{1,n}$ is the first eigenvalue.
\\
\\
The following result holds:
\begin{lemma}
	For any eigenfunction $v_{i,n}$, $i\in\N$  and for any $y\in\R^2$ we have the following integral identities:
	\begin{equation}\label{integralidentity}
	\int_{\partial\Omega}(x-y)\cdot\nabla u_{p_n}\frac{\partial v_{i,n}}{\partial\nu}d\sigma_x=(1-\lambda_{i,n})p_n\int_\Omega  u_{p_n}^{{p_n}-1}v_{i,n}\left((x-y)\cdot\nabla u_{p_n}+\frac{2}{{p_n}-1}u_{p_n}\right)dx.
	\end{equation}
	\begin{equation}\label{star}
	\int_{\partial\Omega}\frac{\partial u_{p_n}}{\partial x_j}\frac{\partial v_{i,n}}{\partial\nu}d\sigma_x=(1-\lambda_{i,n})p_n\int_\Omega  u_{p_n}^{{p_n}-1}\frac{\partial u_{p_n}}{\partial x_j}v_{i,n}dx.
	\end{equation}
\end{lemma}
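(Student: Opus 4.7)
Both identities are Pohozaev-type statements, obtained by testing a conveniently chosen auxiliary equation against $v_{i,n}$, integrating by parts twice, and exploiting the boundary conditions $u_{p_n}|_{\partial\Omega}=v_{i,n}|_{\partial\Omega}=0$ together with the eigenvalue equation \eqref{eigenvalueProblem}.

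For \eqref{star}, I would differentiate the equation $-\Delta u_{p_n}=u_{p_n}^{p_n}$ in $x_j$ to obtain
\[
-\Delta(\partial_{x_j} u_{p_n}) = p_n u_{p_n}^{p_n-1}\partial_{x_j} u_{p_n} \qquad \text{in } \Omega.
\]
Multiplying by $v_{i,n}$ and integrating over $\Omega$, the first integration by parts produces no boundary term since $v_{i,n}|_{\partial\Omega}=0$, while the second moves the Laplacian onto $v_{i,n}$ and, because $\partial_{x_j}u_{p_n}$ does not vanish on $\partial\Omega$, yields the boundary contribution $\int_{\partial\Omega}\partial_{x_j}u_{p_n}\,\partial_\nu v_{i,n}\,d\sigma$. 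Substituting $-\Delta v_{i,n}=\lambda_{i,n}p_n u_{p_n}^{p_n-1}v_{i,n}$ and rearranging immediately yields \eqref{star}.

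For \eqref{integralidentity}, I would take the standard Pohozaev multiplier $\phi(x):=(x-y)\cdot \nabla u_{p_n}(x)$ and verify by direct computation, using $-\Delta u_{p_n}=u_{p_n}^{p_n}$, the commutation identity
\[
-\Delta \phi \;=\; 2\,u_{p_n}^{p_n}+p_n u_{p_n}^{p_n-1}\phi \qquad \text{in } \Omega.
\]
Testing against $v_{i,n}$, integrating by parts twice (again the first boundary term vanishes because $v_{i,n}|_{\partial\Omega}=0$, and the second equals the left-hand side of \eqref{integralidentity}, since on $\partial\Omega$ one has $\nabla u_{p_n}=(\partial_\nu u_{p_n})\,\nu$), and substituting the eigenvalue equation, one arrives at
\[
\int_{\partial\Omega}\phi\,\partial_\nu v_{i,n}\,d\sigma \;=\; 2\int_\Omega u_{p_n}^{p_n} v_{i,n}\,dx + (1-\lambda_{i,n})p_n\int_\Omega u_{p_n}^{p_n-1} v_{i,n}\,\phi\,dx.
\]

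The only non-algebraic input needed is the ``hidden'' orthogonality that absorbs the extra term $2\int_\Omega u_{p_n}^{p_n}v_{i,n}\,dx$ into the prefactor $(1-\lambda_{i,n})p_n$. For this I would test the eigenvalue equation against $u_{p_n}$ and the nonlinear equation against $v_{i,n}$; both yield $\int_\Omega \nabla u_{p_n}\cdot\nabla v_{i,n}\,dx$ and hence $(1-\lambda_{i,n}p_n)\int_\Omega u_{p_n}^{p_n}v_{i,n}\,dx=0$. This relation is algebraically equivalent to
\[
2\int_\Omega u_{p_n}^{p_n}v_{i,n}\,dx \;=\; \frac{2p_n(1-\lambda_{i,n})}{p_n-1}\int_\Omega u_{p_n}^{p_n}v_{i,n}\,dx,
\]
and plugging this back, together with the trivial factorisation $u_{p_n}^{p_n}=u_{p_n}\cdot u_{p_n}^{p_n-1}$, produces exactly \eqref{integralidentity}. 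No step is genuinely hard; the only real risk is a sign or coefficient slip in the two integrations by parts, so I would carefully track every boundary contribution.
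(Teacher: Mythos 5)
Your proof is correct and is essentially the argument of the paper: the paper simply cites \cite[Lemmas 4.3 and 5.1]{GrossiPacellaMathZ}, where exactly your two computations are carried out — differentiate the equation in $x_j$ (for \eqref{star}), or test against the Pohozaev field $(x-y)\cdot\nabla u_{p_n}$ using the commutation identity $-\Delta\big((x-y)\cdot\nabla u_{p_n}\big)=2u_{p_n}^{p_n}+p_nu_{p_n}^{p_n-1}(x-y)\cdot\nabla u_{p_n}$ (for \eqref{integralidentity}), integrate by parts twice, substitute the linearized equation, and absorb the extra term via $(1-\lambda_{i,n}p_n)\int_\Omega u_{p_n}^{p_n}v_{i,n}\,dx=0$. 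All signs, coefficients, and boundary-term cancellations check out, including the harmless case $i=1$ where $(1-\lambda_{1,n}p_n)=0$.
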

\begin{proof}
	\eqref{integralidentity} follows arguing exactly as in \cite[Lemma 4.3]{GrossiPacellaMathZ}, 
	\eqref{star} can be proved as in \cite[Lemma 5.1]{GrossiPacellaMathZ}.	
\end{proof}
Our aim is to study the asymptotic behavior of the eigenfunctions and eigenvalues  $v_{i,n}$ and $\lambda_{i,n}$, $i\in\N$, as $n\rightarrow +\infty$. It is convenient to rescale the eigenfunctions $v_{i,n}$ as follows:
\begin{equation}\label{rescaledeigenfunct}
\widetilde{v}_{i,n}(x):=  v_{i,n}(x_n+\varepsilon_n x), \ \ x\in {\Omega}_n,
\end{equation}
where $x_n$ and $\varepsilon_n$ are as in \eqref{ptimaxloc1} and \eqref{varep1} respectively. 
Then, it is easy to see that
$(\lambda_{i,n},\widetilde{v}_{i,n})$ are the eigenpairs for the following eigenvalue problem
\begin{equation}\label{eqrescaledeigenfunct}
\left\{\begin{array}{lr}
-\Delta v= \lambda V_n(x)v& \mbox{ in }{\Omega}_n\\
v=0 &\mbox{ on }\partial {\Omega}_n\\
\|v\|_{L^{\infty}({\Omega}_n)}=1
\end{array}\right.
\end{equation}
where
\begin{equation}\label{defVn}
V_n(x):=\left(\frac{u_{p_n}(x_n+\varepsilon_n x)}{u_{p_n}(x_n)} \right)^{p_n-1}=\left(  1+\frac{w_{n}(x)}{p_n}  \right)^{p_n-1},
\end{equation}
and  $w_n=w_{1,n}$ is  the rescaled function defined  in  \eqref{defRiscalataMax}.

\

In the rest of the section we prove some crucial intermediate asymptotic results  for eigenvalues and eigenfunctions which will be used throughout the paper.
\begin{lemma}\label{lemma:varphineq0} Let $\widetilde{v}_{i,n}$, $i\in\N$, be the rescaled eigenfunction defined in \eqref{rescaledeigenfunct}. 
If  $\widetilde{v}_{i,n}\to \widetilde v $ in $C^0_{loc}(\R^2)$ and $\lambda_{i,n}\rightarrow \Lambda\in[0,+\infty)$ as $n\rightarrow +\infty$, 
	then $\widetilde v\not\equiv0$.
\end{lemma}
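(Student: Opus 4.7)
The plan is to argue by contradiction: suppose $\widetilde v\equiv 0$. Since $\|\widetilde v_{i,n}\|_{L^\infty(\Omega_n)}=1$, pick $z_n\in\Omega_n$ with $|\widetilde v_{i,n}(z_n)|=1$; the $C^0_{loc}$ hypothesis forces $|z_n|\to\infty$. Setting $y_n:=x_n+\varepsilon_n z_n$ and passing to a subsequence with $y_n\to y_\infty\in\overline\Omega$, the easy case is $y_\infty\neq x_\infty$: the Green representation
\[v_{i,n}(y_n)=\lambda_{i,n}p_n\int_\Omega G(y_n,y)u_{p_n}^{p_n-1}(y)v_{i,n}(y)\,dy,\]
split at $B_\delta(x_\infty)$ (bounded outside via \eqref{utile}, and inside via Remark \ref{rem:lebesgue} together with the uniform boundedness of $G(y_n,\cdot)$ on $B_\delta(x_\infty)$ since $y_n$ stays away from $x_\infty$), immediately gives $v_{i,n}(y_n)\to 0$, a contradiction. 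So I may assume $y_n\to x_\infty$, which is the hard case.

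For this I would use the rescaled Green identity, which after inserting $G=-\frac{1}{2\pi}\log|\cdot|-H$ reads
\[\widetilde v_{i,n}(z)=-\frac{\lambda_{i,n}\log\varepsilon_n}{2\pi}A_n-\frac{\lambda_{i,n}}{2\pi}B_n(z)-\lambda_{i,n}C_n(z),\]
with $A_n:=\int_{\Omega_n}V_n\widetilde v_{i,n}\,dw$, $B_n(z):=\int_{\Omega_n}\log|z-w|V_n\widetilde v_{i,n}\,dw$ and $C_n(z):=\int_{\Omega_n}H(x_n+\varepsilon_n z,x_n+\varepsilon_n w)V_n\widetilde v_{i,n}\,dw$. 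Evaluating at $z=0$: Remark \ref{rem:lebesgue} (applied respectively with $\widetilde v_{i,n}$, $\log|w|\widetilde v_{i,n}$ and $H(x_n,x_n+\varepsilon_n w)\widetilde v_{i,n}$ as $g_n$), combined with the boundedness of $H(x_n,\cdot)$ on $\overline\Omega$ (since $x_n\to x_\infty\in\Omega$) and the tail estimate $\int_{\Omega_n\setminus B_{r/\varepsilon_n}}V_n\to 0$ coming from \eqref{utile}, yields $A_n\to 0$, $B_n(0)\to 0$ and $C_n(0)\to 0$. Since $\widetilde v_{i,n}(0)\to\widetilde v(0)=0$, the identity then forces the quantitative rate
\[\lambda_{i,n}\,|\log\varepsilon_n|\,A_n\;\longrightarrow\;0.\]

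It remains to show the three terms at $z=z_n$ vanish. The first coincides with the one just handled. For $B_n(z_n)$ I would split at $\{|w|\leq|z_n|/2\}$ and write $\log|z_n-w|=\log|z_n|+\log|1-w/z_n|$: since $z_n\in\Omega_n$ gives $|z_n|\leq\mathrm{diam}(\Omega)/\varepsilon_n$ and hence $\log|z_n|\leq|\log\varepsilon_n|+O(1)$, the $\log|z_n|$-prefactor (after multiplication by $\lambda_{i,n}$) is absorbed by combining the two bounds $\lambda_{i,n}|\log\varepsilon_n|A_n\to 0$ and $\lambda_{i,n}A_n\to 0$, while the correction $\log|1-w/z_n|$ is uniformly bounded and tends pointwise to $0$, so its contribution vanishes by dominated convergence. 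On $\{|w|>|z_n|/2\}$, the pointwise bound $V_n(w)\leq C_\gamma|w|^{-(4-\gamma)}$ together with $|\log|z_n-w||\leq\log(3|w|)+C$ gives a remainder of order $\log|z_n|/|z_n|^{2-\gamma}\to 0$. Finally $\lambda_{i,n}C_n(z_n)\to 0$ because $H(y_n,\cdot)$ is uniformly bounded on $\overline\Omega$ (as $y_n\to x_\infty\in\Omega$) and $A_n\to 0$. Altogether $\widetilde v_{i,n}(z_n)\to 0$, the desired contradiction. The main obstacle is the delicate interplay between the large factor $|\log\varepsilon_n|\sim p_n/4$ and the small quantity $A_n$: the mere convergence $A_n\to 0$ is not enough, and the sharper rate $A_n=o(1/|\log\varepsilon_n|)$ extracted from the identity at $z=0$ is essential to absorb the logarithmic loss $\log|z_n|$ that inevitably appears in the analysis of $B_n(z_n)$.
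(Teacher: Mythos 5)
Your proof is correct, but it follows a genuinely different path from the paper's. The paper avoids the logarithmic bookkeeping altogether: it first shows (by elliptic regularity and \eqref{utile}) that $v_{i,n}\to 0$ locally uniformly in $\overline\Omega\setminus\{x_\infty\}$, so the rescaled maximum point $s_n$ satisfies $|s_n|<r/\varepsilon_n$ and $|s_n|\to\infty$. Then it performs a Kelvin transform $z_n(x):=\widetilde v_{i,n}(x/|x|^2)$, which maps $s_n$ to a point near the origin; the transformed equation has a right-hand side $f_n$ which converges to $0$ in $L^2(B_1)$ thanks to the decay estimate \eqref{lebesgue}, so the auxiliary solution $g_n$ of $-\Delta g_n=f_n$ in $B_1$ tends uniformly to $0$, and since $z_n-g_n$ is harmonic in an annulus, the maximum principle bounds it by its (vanishing) boundary values. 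This contradicts $z_n(s_n/|s_n|^2)=1$. Your route instead pulls the Green representation through the rescaling, splits $G=-\tfrac1{2\pi}\log|\cdot|-H$, and confronts head-on the $-\tfrac{\lambda_{i,n}\log\varepsilon_n}{2\pi}A_n$ term produced by the scaling of the logarithm; the key and correct observation is that evaluating the same identity at $z=0$, where $\widetilde v_{i,n}(0)\to 0$ is already known, yields the refined rate $\lambda_{i,n}|\log\varepsilon_n|A_n\to 0$, which is exactly what is needed to absorb the loss $\log|z_n|\le|\log\varepsilon_n|+O(1)$ at $z=z_n$. Both arguments work; the Kelvin-transform device is conceptually tidier and sidesteps the delicate compensation between $A_n\to 0$ and $|\log\varepsilon_n|\to\infty$, whereas your approach makes that compensation explicit and is perhaps more transparent about exactly where the decay estimate \eqref{lebesgue} is used. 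Two small cautions: in your ``easy case'' you should note it covers $y_\infty\in\partial\Omega$ as well (the bound $\int_\Omega|G(y_n,\cdot)|\,dy\le C$ uniformly in $y_n$ handles this), and in the estimate of $B_n(z_n)$ on $\{|w|>|z_n|/2\}$ the bound $|\log|z_n-w||\le\log(3|w|)+C$ only controls the positive part of the logarithm; the negative part (the singularity at $w=z_n$) must be controlled separately using $V_n(w)\lesssim|z_n|^{-(4-\gamma)}$ there together with the local integrability of $\log$, which is straightforward but should be stated.
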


\begin{proof}	%\edz{citare Grossi-outhsuka-suzuki?}	
Let us first consider the eigenfunction  $v_{i,n}$, which solves \eqref{eigenvalueProblem} with $\lambda=\lambda_{i,n}$.\\
Observe that, by  \eqref{utile},  $\lambda_{i,n}p_n u_{p_n}^{p_n-1}v_{i,n}\to 0$ locally uniformly in $\Omega\setminus\{x_\infty\}$ as $n\rightarrow +\infty$. Hence from \eqref{eigenvalueProblem}, by standard elliptic regularity estimates,   we deduce that
\begin{equation}
\label{ven4}
v_{i,n}\longrightarrow0\qquad\mbox{locally uniformly in }\overline\Omega\setminus \{x_\infty\} \mbox{, as }n\rightarrow +\infty.
\end{equation}
Let now  $\widetilde v_{i,n} $ be the rescaled eigenfunction. Let us assume without loss of generality that $\max_{\Omega_n} \widetilde v_{i,n}=1$ and let us denote by $s_n\in\Omega_n$ a point such that 
\begin{equation}\label{tildevin=1}
\widetilde v_{i,n}(s_n)=1.
\end{equation}
Observe that $B_{\frac{r}{\varepsilon_n}}(0)\subset\Omega_n$ (by the choice of $r$ in Theorem \ref{teoAsymptotic}) and that
\begin{equation}
\label{snstainpalla}
|s_n|<\frac{r}{\varepsilon_n}\quad \mbox{ for $n$ large,}
\end{equation}
indeed $v_{i,n}(x_n+\varepsilon_n s_n)=1$ by \eqref{rescaledeigenfunct} and \eqref{tildevin=1}, so that by \eqref{ven4} and \eqref{convptimax} we can easily deduce that $x_n+\varepsilon_n s_n\in B_r(x_n)$ for $n$ large, namely \eqref{snstainpalla}.\\
Assume by contradiction that $\widetilde v\equiv0$, namely  that 
\begin{equation}\label{vtildena0}
\widetilde v_{i,n}\longrightarrow0\quad\mbox{locally uniformly in $\R^2$, as $n\rightarrow +\infty$.}
\end{equation}
Then necessarily $s_n\to+\infty$ and so in particular 
\begin{equation}\label{snmaggiorediuno}
|s_n|>1 \quad \mbox{for $n$ large.}
\end{equation}
Let $z_n$ be the Kelvin transform of $\widetilde v_{i,n}$, namely
\[
z_{n}(x):=\widetilde v_{i,n}\left(\frac{x}{|x|^2}\right). 
\]
Observe that $z_n$ is well defined in $\R^2\setminus B_{\frac{\varepsilon_n}{r}}(0)$ (since $ B_{\frac{r}{\varepsilon_n}}(0)\subset\Omega_n$ and $\widetilde v_{i,n}$ is defined in $\Omega_n$) and by \eqref{eqrescaledeigenfunct}, it  satisfies
\begin{equation}\label{equazionezn}
	-\Delta z_{n}=\frac{\lambda_{i,n}}{|x|^4}V_n(\frac{x}{|x|^2})z_{n} \qquad\mbox{in }\R^2\setminus B_{\frac{\varepsilon_n}{r}}(0).
\end{equation}
	Moreover, by \eqref{vtildena0},   
	\begin{equation}\label{znconvZeroPunt}z_{n}(x)\to 0\mbox{ as }n\rightarrow +\infty,\mbox{ pointwise for any }x\neq 0.\end{equation}
Let us define
\[f_n(x):=\left\{
\begin{array}{ll}
\displaystyle\frac{\lambda_{i,n}}{|x|^4}V_n(\frac{x}{|x|^2})z_{n}(x) &\mbox{ for  }x\in B_1(0)\setminus \overline{B_{\frac{\varepsilon_n}{r}}(0)}
\\
&\\
0&\mbox{ for }x\in B_{\frac{\varepsilon_n}{r}}(0).
\end{array}\right.\]
By \eqref{znconvZeroPunt}, the definition of $V_n$ in \eqref{defVn} and using \eqref{convRiscalateNeiMax} we have that
$f_n\to0$ pointwise in $B_1(0)$,  as $n\rightarrow +\infty$. Moreover by the estimate \eqref{lebesgue}  one has 
\[
f^2_n(x) 
\leq h(x):=
\frac{(\Lambda+1)^2C_\gamma^2}{|x|^{2\gamma}(1+|x|^{4-\gamma})^2} \quad
\in L^1(B_1(0)) \mbox{, choosing $\gamma<\frac12$}.
\]
Thus by the dominated convergence theorem 
\begin{equation}\label{convInL2}
\lim_{n\rightarrow +\infty}\|f_n\|_{L^2(B_1(0)}=0.
\end{equation}
As a consequence, considering $g_n\in H^1_0(B_1(0))$ such that 
\begin{equation}\label{equaziongn}
\left\{
\begin{array}{ll}
-\Delta g_n=f_n & \mbox{ in }B_1(0)
\\
g_n=0 &\mbox{ on }\partial B_1(0),
\end{array}
\right.
\end{equation}
 by \eqref{convInL2} and  the elliptic regularity we have that 
\begin{equation} 
\label{ven3}
g_n\to0\quad\mbox{ uniformly in } B_1(0) \mbox{ as }n\rightarrow +\infty.
\end{equation}
Now we consider the difference $z_{n}-g_n$. This function is harmonic on $B_1(0)\setminus \overline{B_{\frac{\varepsilon_{n}}{r}}(0)}$  by the equations \eqref{equazionezn} and \eqref{equaziongn} and the maximum principle for harmonic functions guarantees
\begin{eqnarray*}
\|z_{n}-g_n\|_{L^\infty(B_1(0))\setminus \overline{B_{\frac{\varepsilon_{n}}{r}}(0)})}&\leq& \|z_{n} -g_n \|_{L^\infty(\partial B_1(0))}+\|z_{n} -g_n \|_{L^\infty(\partial {B_{\frac{\varepsilon_{n}}{r}}(0)})}\\
&\leq& \|z_{n}\|_{L^\infty(\partial B_1(0))}+\| z_{n}\|_{L^\infty(\partial {B_{\frac{\varepsilon_{n}}{r}}(0)})}+\| g_n\|_{L^\infty(\partial {B_{\frac{\varepsilon_{n}}{r}}(0)})}\\
&\overset{\eqref{ven3}}{=}&  \|\widetilde v_{i,n}\|_{L^\infty(\partial B_1(0))}+\|\widetilde v_{i,n}\|_{L^\infty(\partial B_{\frac{r}{\varepsilon_n}}(0))}+o(1)\\
&\overset{\eqref{vtildena0}}{=}& \|v_{i,n}\|_{L^\infty(\partial B_r(x_n))}+o(1)
\\
&\overset{\eqref{convptimax}}{=}& \|v_{i,n}\|_{L^\infty(\partial B_r(x_{\infty}))}+o(1)\overset{\eqref{ven4}}{=}o(1).
\end{eqnarray*}
In conclusion, recalling again \eqref{ven3}, we get
\[
\|z_{n}\|_{L^\infty(B_1(0)\setminus \overline{B_{\frac{\varepsilon_{n}}{r}}(0)})}\leq \|g_n\|_{L^\infty(B_1(0))}+\|z_{n}-g_n\|_{L^\infty(B_1(0)\setminus \overline{B_{\frac{\varepsilon_{n}}{r}}(0)})}=o(1),
\]
which contradicts the fact that by \eqref{snstainpalla} and \eqref{snmaggiorediuno}
\[\frac{s_n}{|s_n|^2}\in B_1(0)\setminus \overline{B_{\frac{\varepsilon_{n}}{r}}(0)}\]
and
\[
z_{n}\left(\frac{s_n}{|s_n|^2}\right)=\widetilde v_{i,n}(s_n)\overset{\eqref{tildevin=1}}{=}1.
\]
\end{proof}

\

Next, let us recall a well known characterization of the kernel of the linearized operator at $U$ of the Liouville equation obtained in \cite{ElMehdiGrossi}.
\begin{lemma}\label{thm:autofunznucleo}
Let $v\in C^2(\R^2)$ be a solution of the following problem
%\edz{ogni soluzione $C^1$ e' anche $C^2$, quindi in prossimo lemma ho rimesso convergenza $C^1_{loc}(\R^2)$}
\begin{equation}\label{quadratino}
\left\{
\begin{array}{ll}
-\Delta v=e^U v &\mbox{in $\R^2$}\\
v\in L^\infty(\R^2),
\end{array}
\right.
\end{equation}
where $U$ is defined in \eqref{definizioneU}. 
Then
\[
v(y)=\sum_{k=1}^2\frac{a_k y_k}{8+|y|^2}+b\frac{8-|y|^2}{8+|y|^2}
\]
for some $a_k$, $b\in\R$.
\end{lemma}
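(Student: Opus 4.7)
The plan is to pass to polar coordinates $y = (r\cos\theta, r\sin\theta)$ and decompose $v$ into angular Fourier modes, reducing the PDE to a one-parameter family of linear second order ODEs, and then classifying the bounded solutions mode by mode. Since $v\in L^\infty(\R^2)$, writing
\[
v(r,\theta)=\sum_{k=0}^\infty\bigl(\phi_k(r)\cos(k\theta)+\psi_k(r)\sin(k\theta)\bigr),
\]
the equation $-\Delta v=e^{U}v$ together with $e^{U(y)}=\frac{64}{(8+|y|^2)^2}$ and the polar form of the Laplacian forces each coefficient $\phi_k$ (and similarly each $\psi_k$) to solve
\begin{equation}\label{eq:Lk}
-\phi_k''(r)-\frac{1}{r}\phi_k'(r)+\frac{k^2}{r^2}\phi_k(r)=\frac{64}{(8+r^2)^2}\,\phi_k(r),\qquad r>0,
\end{equation}
with $\phi_k$ bounded and, for $k\ge1$, regular at the origin ($\phi_k(0)=0$).

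The next step is to produce, for each $k$, one explicit solution by differentiating the family of bubbles $U_{\lambda,a}(y)=-2\log(1+|y-a|^2/(8\lambda^2))+\text{const}$ along its symmetries. Differentiating in $a_j$ at $a=0$, $\lambda=1$ yields the two translation eigenfunctions
\[
\frac{\partial U}{\partial y_j}(y)=\frac{-y_j/2}{1+|y|^2/8}=-\frac{4y_j}{8+|y|^2},
\]
which live in the $k=1$ sector and show that $\phi_1(r)=r/(8+r^2)$ solves \eqref{eq:Lk}. Differentiating in $\lambda$ at $\lambda=1$ gives the dilation eigenfunction $\frac{8-|y|^2}{8+|y|^2}$, which lives in the $k=0$ sector and shows that $\phi_0(r)=(8-r^2)/(8+r^2)$ solves \eqref{eq:Lk}. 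In each of these two sectors one constructs the second, linearly independent solution by reduction of order: the Wronskian identity combined with the explicit formulas for $\phi_0,\phi_1$ above produces integrals that are logarithmically singular at the origin (for $k=0$) or behave like $r^{-1}$ near $0$ (for $k=1$), so only the known explicit solutions are bounded (and regular at $0$ when $k\geq1$).

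It remains to rule out any nonzero bounded solution for $k\ge2$, which is the main point. The idea is to exploit the fact that the potential $\tfrac{64}{(8+r^2)^2}$ decays like $r^{-4}$ at infinity and, after the change of variable $r=\sqrt8\,e^t$, $\phi_k(r)=\varphi_k(t)$, equation \eqref{eq:Lk} becomes
\[
-\varphi_k''(t)+k^2\varphi_k(t)=\frac{4}{\cosh^2 t}\,\varphi_k(t),
\]
a one-dimensional Schrödinger equation on $\R$ with the reflectionless Pöschl--Teller potential. Its spectrum and eigenfunctions are classical: the $L^\infty(\R)$ solutions have eigenvalues $k^2\in\{0,1\}$, corresponding precisely to the $k=0$ mode (eigenvalue $0$, eigenfunction $\tanh t\leftrightarrow (8-r^2)/(8+r^2)$) and the $k=1$ mode (eigenvalue $1$, eigenfunction $\mathrm{sech}\,t\leftrightarrow r/(8+r^2)$); for $k\ge2$, $k^2\ge 4$ lies in the continuous spectrum and the two linearly independent solutions at $+\infty$ behave like $e^{\pm k t}$, so the only solution regular at $0$ grows like $e^{kt}\sim r^k$ at infinity, forcing $\varphi_k\equiv0$.

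Combining the three cases: all $\phi_k$ and $\psi_k$ with $k\ge2$ vanish, $\phi_0$ is a multiple of $(8-|y|^2)/(8+|y|^2)$, and $\phi_1\cos\theta+\psi_1\sin\theta$ is a linear combination of $y_1/(8+|y|^2)$ and $y_2/(8+|y|^2)$, giving exactly the claimed expression
\[
v(y)=\sum_{k=1}^{2}\frac{a_k y_k}{8+|y|^2}+b\,\frac{8-|y|^2}{8+|y|^2}.
\]
The hardest technical point is the exclusion for $k\ge2$; I expect the cleanest presentation to be the Pöschl--Teller reduction above, but equivalently one can run a direct Wronskian/asymptotic analysis on \eqref{eq:Lk}, verifying that the solution determined by $\phi_k(r)\sim r^k$ near $0$ satisfies $\phi_k(r)\sim c_k r^k$ at infinity with $c_k\neq0$ for $k\geq 2$.
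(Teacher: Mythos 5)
The paper does not prove this lemma; it simply records it as ``a well known characterization'' and cites El~Mehdi--Grossi, so there is no in-paper proof to compare against. Your argument is the standard one and is essentially correct: decompose into angular Fourier modes, identify the explicit $k=0$ and $k=1$ eigenfunctions coming from the scaling and translation invariances of the Liouville equation, and rule out $k\ge 2$ by a spectral/asymptotic analysis of the radial ODE.

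One arithmetic slip: with $e^{U(r)}=\dfrac{64}{(8+r^2)^2}$ and the substitution $r=\sqrt8\,e^{t}$, $\phi_k(r)=\varphi_k(t)$, one gets
\[
r^2 e^{U(r)}=\frac{8e^{2t}}{(1+e^{2t})^2}=\frac{2}{\cosh^2 t},
\]
so the reduced equation is $-\varphi_k''+k^2\varphi_k=\dfrac{2}{\cosh^2 t}\,\varphi_k$, not with coefficient $4$. This actually makes the picture cleaner: $\dfrac{2}{\cosh^2 t}$ is the $n=1$ P\"oschl--Teller potential, whose operator $-\partial_t^2-\dfrac{2}{\cosh^2 t}$ has exactly one bound state, at energy $-1$ with eigenfunction $\mathrm{sech}\,t\leftrightarrow \dfrac{r}{8+r^2}$, plus a bounded zero-energy resonance $\tanh t\leftrightarrow \dfrac{8-r^2}{8+r^2}$ whose companion grows linearly in $t$ (log-singular in $r$). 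For the exclusion of $k\ge 2$ you can also avoid the Jost-function/asymptotic discussion entirely: since $\|2/\cosh^2 t\|_\infty=2<4\le k^2$, a bounded solution necessarily decays like $e^{-k|t|}$ at both ends, hence lies in $H^1(\R)$, and multiplying the equation by $\varphi_k$ and integrating gives $\int(\varphi_k'^2+(k^2-2/\cosh^2 t)\varphi_k^2)=0$ with a strictly positive integrand, forcing $\varphi_k\equiv 0$. With these two small adjustments (the correct coefficient $2$, and either the quadratic-form argument or a careful justification that the Jost coefficient of $e^{kt}$ is nonzero), the proof is complete and correct.
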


\begin{lemma}\label{lemma:limab}
Let $i\in\N$. If $\lambda_{i,n}\to1$, as $n\to+\infty$,  then  there exists   $\R^3\ni (a^i_1,a^i_2,b^i)\neq(0,0,0)$ such that
\begin{equation}\label{tesiLemmalimab}
\widetilde{v}_{i,n}(y)\longrightarrow\sum_{j=1}^2\frac{a^i_j y_j}{8+|y|^2}+ b^i\frac{8-|y|^2}{8+|y|^2}\quad \mbox{ as } n\to+\infty \mbox{ in }C^1_{loc}(\R^2),
\end{equation}
where $\widetilde{v}_{i,n}$ is the rescaled eigenfunction defined in \eqref{rescaledeigenfunct}.
\end{lemma}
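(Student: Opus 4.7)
The plan is to combine standard elliptic regularity with the classification of bounded solutions of the linearized Liouville equation provided by Lemma \ref{thm:autofunznucleo}, using Lemma \ref{lemma:varphineq0} to rule out the trivial limit. From \eqref{eqrescaledeigenfunct}, the rescaled eigenfunction $\widetilde{v}_{i,n}$ satisfies $-\Delta \widetilde{v}_{i,n} = \lambda_{i,n} V_n \widetilde{v}_{i,n}$ on $\Omega_n$ with $\|\widetilde{v}_{i,n}\|_{L^\infty(\Omega_n)} = 1$. By \eqref{lebesgue} (applied with any fixed $\gamma \in (0,1)$) and \eqref{convRiscalateNeiMax}, together with \eqref{defVn}, the potential $V_n$ is uniformly bounded on compact subsets of $\R^2$ and converges to $e^U$ locally uniformly; combined with $\lambda_{i,n} \to 1$, this bounds the right-hand side in $L^\infty_{\mathrm{loc}}(\R^2)$.

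Next, I would apply standard $L^q$ elliptic regularity to obtain uniform $W^{2,q}_{\mathrm{loc}}$ bounds on $\widetilde{v}_{i,n}$ for every $q < \infty$, and hence uniform $C^{1,\alpha}_{\mathrm{loc}}$ bounds by Sobolev embedding. Extracting a subsequence (not relabeled, consistent with the convention introduced by Theorem \ref{teoAsymptotic}), I get $\widetilde{v}_{i,n} \to \widetilde{v}$ in $C^1_{\mathrm{loc}}(\R^2)$ for some function $\widetilde{v}$. Passing to the limit in the equation yields $-\Delta \widetilde{v} = e^U \widetilde{v}$ in $\R^2$, and the normalization $\|\widetilde{v}_{i,n}\|_\infty \le 1$ forces $\widetilde{v} \in L^\infty(\R^2)$. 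Thus $\widetilde{v}$ solves \eqref{quadratino}, and Lemma \ref{thm:autofunznucleo} gives
\[
\widetilde{v}(y) = \sum_{j=1}^2 \frac{a_j^i\, y_j}{8+|y|^2} + b^i\,\frac{8-|y|^2}{8+|y|^2}
\]
for some $a_1^i, a_2^i, b^i \in \R$.

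Finally, I invoke Lemma \ref{lemma:varphineq0} with $\Lambda = 1$: since $\widetilde{v}_{i,n} \to \widetilde{v}$ at least in $C^0_{\mathrm{loc}}(\R^2)$ and $\lambda_{i,n} \to 1 \in [0,+\infty)$, the limit $\widetilde{v}$ cannot be identically zero, which immediately gives $(a_1^i, a_2^i, b^i) \neq (0,0,0)$. The heart of the argument has really been done in the two preparatory lemmas, so no serious obstacle remains at this stage; the only delicate point is upgrading the convergence from $C^0_{\mathrm{loc}}$ (which is all that Lemma \ref{lemma:varphineq0} needs) to the $C^1_{\mathrm{loc}}$ convergence stated in \eqref{tesiLemmalimab}, which follows from the $C^{1,\alpha}_{\mathrm{loc}}$ compactness provided by elliptic regularity.
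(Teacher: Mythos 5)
Your proposal is correct and matches the paper's own proof: both rest on the same three ingredients — standard elliptic estimates to extract a $C^1_{\mathrm{loc}}$ limit of $\widetilde{v}_{i,n}$ solving \eqref{quadratino}, the classification Lemma \ref{thm:autofunznucleo} to identify the limit, and Lemma \ref{lemma:varphineq0} (with $\Lambda=1$) to rule out the trivial limit. The only difference is that you spell out the $W^{2,q}_{\mathrm{loc}}$/$C^{1,\alpha}_{\mathrm{loc}}$ bootstrap that the paper summarizes as ``standard elliptic estimates.''
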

\begin{proof}
The rescaled eigenfunction $\widetilde{v}_{i,n}$ satisfies the eigenvalue problem \eqref{eqrescaledeigenfunct}.
By the assumption $\lambda_{i,n}\to1$, \eqref{convRiscalateNeiMax} and standard elliptic estimates we have that $\widetilde v_{i,n}$ converges in $C^1_{loc}(\R^2)$ to a solution $\widetilde v_i$  of \eqref{quadratino}. By Lemma \ref{thm:autofunznucleo} we have that 
\[
\widetilde v_i(y)=\sum_{k=1}^2\frac{a_k^i y_k}{8+|y|^2}+b^i\frac{8-|y|^2}{8+|y|^2},
\]
for some $(a^i_1,a^i_2,b^i )\in\R^3$. 
At last  $(a^i_1,a^i_2,b^i )\neq (0,0,0)$ by Lemma \ref{lemma:varphineq0} (applied with $\Lambda=1$).
\end{proof}

\begin{proposition}\label{prop:assurdob}
Let $i\in\N$. Suppose that  $\lambda_{i,n}\to1$ as $n\to+\infty$ and that
$b^i\neq0$, where $b^i$ is the constant in \eqref{tesiLemmalimab}. 
Then:	
\begin{equation}\label{riorganizzazione1}
p_n v_{i,n}\to-8\pi b^i G(x,x_{\infty})\qquad \mbox{in $C^1_{loc}(\overline\Omega\setminus\{x_\infty\})$}
\end{equation}
and 
\begin{equation}\label{riorganizzazione2}
\lambda_{i,n}=1+\frac{6}{p_n}(1+o(1))\qquad\mbox{as $n\to+\infty$}.
\end{equation}
\end{proposition}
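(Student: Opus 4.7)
My plan is to derive both conclusions from two key ingredients: the orthogonality $\int_\Omega u_{p_n}^{p_n}v_{i,n}\,dx=0$, which holds because $v_{i,n}$ is $H^1_0$-orthogonal to $v_{1,n}\propto u_{p_n}$ and the assumption $\lambda_{i,n}\to 1\neq \lambda_{1,n}=1/p_n$ forces $i\geq 2$; and the Pohozaev-type identity \eqref{integralidentity}. Throughout I will use the logarithmic bound $|w_n(z)|\le C(1+\log(1+|z|))$ on $\Omega_n$, obtained by integrating $|\nabla w_n|\le C/|z|$ (a consequence of \eqref{P4tilde}) radially from a fixed reference point, together with the uniform bound $|z\cdot\nabla w_n|\le C$; combined with the decay estimate \eqref{lebesgue}, these allow me to apply dominated convergence via Remark \ref{rem:lebesgue} in every integral that appears.

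For \eqref{riorganizzazione1}, the starting point is the Green representation, which after rescaling reads
\[
p_n v_{i,n}(x) = \lambda_{i,n}\,p_n \int_{\Omega_n} G(x, x_n + \varepsilon_n z)\,V_n(z)\,\widetilde v_{i,n}(z)\,dz.
\]
For $x$ in a compact subset of $\overline\Omega\setminus\{x_\infty\}$, a Taylor expansion $G(x,x_n+\varepsilon_n z)=G(x,x_n)+\varepsilon_n z\cdot\nabla_y G(x,x_n)+O(\varepsilon_n^2|z|^2)$ reduces this to
\[
p_n v_{i,n}(x) = \lambda_{i,n}\,G(x,x_n)\,\Bigl[\,p_n\!\int V_n\widetilde v_{i,n}\,dz\Bigr] + o(1),
\]
because the remainder terms carry factors $p_n\varepsilon_n$ and $p_n\varepsilon_n^2$, which vanish exponentially by \eqref{varep1}--\eqref{sqrte}, against integrals $\int |z|^k V_n\widetilde v_{i,n}\,dz$ controlled through \eqref{lebesgue}. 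The crux is then to compute $\lim_n p_n\!\int V_n\widetilde v_{i,n}\,dz$. Rescaling the orthogonality $\int u_{p_n}^{p_n}v_{i,n}\,dx=0$ gives
\[
\int_{\Omega_n} V_n\Bigl(1+\tfrac{w_n}{p_n}\Bigr)\widetilde v_{i,n}\,dz=0,\qquad\text{i.e.}\qquad p_n\!\int V_n\widetilde v_{i,n}\,dz = -\!\int V_n w_n \widetilde v_{i,n}\,dz.
\]
Remark \ref{rem:lebesgue} applied with $g_n := w_n \widetilde v_{i,n}$ (dominated by the logarithmic function above) yields $\int V_n w_n\widetilde v_{i,n}\,dz \to \int_{\R^2} e^U U\widetilde v_i\,dz = 8\pi b^i$; the value of this integral is a direct radial computation, in which the contributions from the $a^i_k$-components of $\widetilde v_i$ vanish by odd symmetry. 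Therefore $p_n\!\int V_n\widetilde v_{i,n}\,dz\to -8\pi b^i$, and $p_n v_{i,n}\to -8\pi b^i G(\cdot,x_\infty)$ pointwise on compact subsets of $\overline\Omega\setminus\{x_\infty\}$. The upgrade to $C^1_{\rm loc}$ is a standard elliptic regularity argument applied to $-\Delta(p_n v_{i,n})=\lambda_{i,n}(p_n u_{p_n}^{p_n-1})(p_n v_{i,n})$, whose right-hand side is $o(1)$ in $L^\infty(\omega)$ for every compact $\omega\subset\overline\Omega\setminus\{x_\infty\}$ by \eqref{utile}, with boundary regularity coming from the common Dirichlet condition $v_{i,n}=G(\cdot,x_\infty)=0$ on $\partial\Omega$.

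For \eqref{riorganizzazione2}, I exploit identity \eqref{integralidentity} with $y=x_n$: the orthogonality $\int u_{p_n}^{p_n}v_{i,n}\,dx=0$ kills the $\frac{2}{p_n-1}u_{p_n}$ term and leaves
\[
I_n := \int_{\partial\Omega}(x-x_n)\cdot\nabla u_{p_n}\,\frac{\partial v_{i,n}}{\partial\nu}\,d\sigma_x = (1-\lambda_{i,n})\,A_n,
\]
with $A_n := p_n\int_\Omega u_{p_n}^{p_n-1}v_{i,n}(x-x_n)\cdot\nabla u_{p_n}\,dx$. Rescaling yields $p_n A_n=\|u_{p_n}\|_\infty\int V_n\widetilde v_{i,n}(z\cdot\nabla w_n)\,dz$; dominated convergence together with a radial integration gives $p_n A_n\to\sqrt e\int e^U\widetilde v_i(z\cdot\nabla U)\,dz=\frac{16\pi\sqrt e}{3}b^i$. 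On the boundary side, \eqref{riorganizzazione1} (just established) combined with the $C^1(\partial\Omega)$ convergence $p_n u_{p_n}\to 8\pi\sqrt e\,G(\cdot,x_\infty)$ from Theorem \ref{teoAsymptotic} and identity \eqref{Green1} (using $\nabla G=\nu\,\partial G/\partial\nu$ on $\partial\Omega$) yields
\[
p_n^2 I_n \longrightarrow -64\pi^2\sqrt e\,b^i\int_{\partial\Omega}(x-x_\infty)\cdot\nu\Bigl(\frac{\partial G}{\partial\nu}(\cdot,x_\infty)\Bigr)^{\!2}d\sigma_x = -32\pi\sqrt e\,b^i.
\]
Equating the two asymptotics, i.e. $p_n^2 I_n = p_n(1-\lambda_{i,n})\cdot p_n A_n$, and cancelling $b^i\neq 0$, gives $p_n(1-\lambda_{i,n})\to -6$, which is \eqref{riorganizzazione2}.

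The main technical difficulty is the systematic justification of the dominated-convergence passages through Remark \ref{rem:lebesgue}; the decisive structural input is the orthogonality $\int u_{p_n}^{p_n}v_{i,n}\,dx=0$, without which the leading-order vanishing $\int e^U\widetilde v_i\,dz=0$ could not be exploited to reveal the $1/p_n$-scale cancellation that drives both \eqref{riorganizzazione1} and \eqref{riorganizzazione2}.
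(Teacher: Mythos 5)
Your overall strategy coincides with the paper's: the Green representation formula together with the orthogonality $\int_\Omega u_{p_n}^{p_n}v_{i,n}\,dx=0$ for \eqref{riorganizzazione1}, and the Pohozaev identity \eqref{integralidentity} at $y=x_n$ for \eqref{riorganizzazione2}, with the same limiting integrals $\int_{\R^2}e^U U\widetilde v_i\,dz = 8\pi b^i$ and $\int_{\R^2}e^U(z\cdot\nabla U)\widetilde v_i\,dz = \tfrac{16}{3}\pi b^i$. Your exact annihilation of the $\tfrac{2}{p_n-1}u_{p_n}$ term in the Pohozaev identity via the orthogonality is a clean simplification relative to the paper, which carries this term through and only sees it vanish a posteriori via the identity $\int_{\R^2}e^U\widetilde v_i\,dz=0$; the two routes give the same number because of precisely that identity. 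Your logarithmic bound on $|w_n|$ via integrating $|\nabla w_n|\leq C/|z|$ from a point on the unit circle is also sound, and in fact it supplies the two-sided bound that the paper's invocation of \eqref{prelebesgue} alone (an upper bound on $w_n$, which is $\leq 0$) does not immediately give.

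There is, however, a gap in your treatment of \eqref{riorganizzazione1}. The second-order Taylor expansion $G(x,x_n+\varepsilon_n z)=G(x,x_n)+\varepsilon_n z\cdot\nabla_y G(x,x_n)+O(\varepsilon_n^2|z|^2)$ with a uniform constant in the remainder is valid only as long as $y=x_n+\varepsilon_n z$ stays in a fixed ball $B_\delta(x_n)$ kept away from the singularity of $G(x,\cdot)$ at $y=x$; once $|z|\gtrsim \delta/\varepsilon_n$ the second derivative $D^2_y G(x,\cdot)$ is not bounded and the expansion breaks down. You therefore cannot expand over all of $\Omega_n$ and must split the integral into a region near $x_n$ (where Taylor applies), an intermediate region (where $V_n$ decays by \eqref{lebesgue}), and the far region $\Omega\setminus B_\delta(x_n)$ (where $u_{p_n}^{p_n-1}$ is negligible by \eqref{utile}) — essentially the three-region decomposition the paper performs in its estimate of $I_{i,n}$. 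Moreover, even inside $B_{\delta/\varepsilon_n}(0)$ the integral $\int|z|^2 V_n\,dz$ is not uniformly bounded (it grows like $\varepsilon_n^{-\gamma}$ for any small $\gamma>0$ by \eqref{lebesgue}), so the phrase ``controlled through \eqref{lebesgue}'' has to mean ``grows slower than $p_n\varepsilon_n^2$ decays,'' not ``bounded.'' Your stated conclusion $p_n v_{i,n}=\lambda_{i,n}G(x,x_n)\bigl[p_n\int V_n\widetilde v_{i,n}\bigr]+o(1)$ is correct, but the one-line justification as written does not establish it; you need the region decomposition to make the argument hold.
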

\begin{proof}
	{\emph{Step 1. We prove \eqref{riorganizzazione1}.}}\\
 Multiplying equations \eqref{problem} (with $p=p_n$) and \eqref{eigenvalueProblem} (with $p=p_n$ and $v=v_{i,n}$) by $v_{i,n}$ and $u_{p_n}$ respectively, integrating by parts and subtracting, we get
\[
\int_\Omega u_{p_n}^{p_n} v_{i,n}dx=0.
\]
Then
\begin{eqnarray}\label{ali}
0=\int_\Omega u_{p_n}^{p_n}v_{i,n} dx&=&\int_\Omega u_{p_n}^{{p_n}-1}(u_{p_n}-\|u_{p_n}\|_{L^{\infty}(\Omega)}+\|u_{p_n}\|_{L^{\infty}(\Omega)})v_{i,n}dx
\nonumber\\
&=&  \|u_{p_n}\|_{L^{\infty}(\Omega)}\int_\Omega u_{p_n}^{{p_n}-1}v_{i,n}dx +\int_\Omega u_{p_n}^{{p_n}-1}(u_{p_n}-\|u_{p_n}\|_{L^{\infty}(\Omega)})v_{i,n}dx. 
\end{eqnarray}
Moreover
\begin{eqnarray}\label{baba}
\int_{\Omega\setminus B_r(x_n)}u_{p_n}^{{p_n}-1}(\|u_{p_n}\|_{L^{\infty}(\Omega)}-u_{p_n})|v_{i,n}|dx &\leq& 2 |\Omega|
\|u_{p_n}\|^{p_n-1}_{L^\infty(\Omega\setminus B_r(x_n))}\|u_{p_n}\|_{L^{\infty}(\Omega)}
\nonumber\\
&\overset{(\star)}{\leq}& 2 |\Omega|
\|u_{p_n}\|^{p_n-1}_{L^\infty(\Omega\setminus B_{\frac{r}{2}}(x_\infty))}\|u_{p_n}\|_{L^{\infty}(\Omega)}
\nonumber\\
&\overset{\eqref{utile}+\eqref{boundLoo}}{=}& o(1),
\end{eqnarray}
where in $(\star)$ we have used that $B_{\frac{r}{2}}(x_{\infty})\subseteq B_r(x_n)$ for $n$ large, which is a consequence of \eqref{convptimax}. 
So by \eqref{ali} and \eqref{baba} we have
\begin{equation}\label{ondina}
-\|u_{p_n}\|_{L^{\infty}(\Omega)}\int_\Omega u_{p_n}^{{p_n}-1}v_{i,n}dx=\int_{B_r(x_n)} u_{p_n}^{{p_n}-1}(u_{p_n}-\|u_{p_n}\|_{L^{\infty}(\Omega)})v_{i,n}dx+o(1).
\end{equation}
Next, rescaling and recalling the definitions of $\widetilde v_{i,n}$  (see \eqref{rescaledeigenfunct}) and  $w_{n}$ (see \eqref{defRiscalataMax}), we have
\begin{eqnarray}\label{Max}
&&\int_{B_r(x_n)} u_{p_n}^{{p_n}-1}(u_{p_n}-\|u_{p_n}\|_{L^{\infty}(\Omega)})v_{i,n}dx =
\nonumber\\
&\overset{\eqref{rescaledeigenfunct}}{=}& \varepsilon_{n}^2\int_{B_{\frac{r}{\varepsilon_{n}}}(0)}u_{p_n}^{{p_n}-1}(x_n+\varepsilon_n y)(u_{p_n}(x_n+\varepsilon_n y)-\|u_{p_n}\|_{L^{\infty}(\Omega)}) \widetilde{v}_{i,n}(y)dy \nonumber\\
&\overset{\eqref{varep1} + \eqref{defRiscalataMax}}{=}&\frac{\|u_{p_n}\|_{L^{\infty}(\Omega)}}{{p_n}^2}\int_{B_{\frac{r}{\varepsilon_n}}(0)}\left(1+\frac{w_{n}(y)}{p_n}\right)^{{p_n}-1} w_{n}(y)\widetilde{v}_{i,n}(y)dy
\nonumber\\
&\overset{\eqref{convRiscalateNeiMax}+ \eqref{tesiLemmalimab}}{=}&\frac{\|u_{p_n}\|_{L^{\infty}(\Omega)}}{{p_n}^2}\left(\int_{\R^2}e^{U(y)}U(y)\left(\sum_{j=1}^2\frac{a^i_j y_j}{8+|y|^2}+ b^i\frac{8-|y|^2}{8+|y|^2}\right)dy + o(1)\right),
\end{eqnarray}
where the passage to the limit in the last equality can be done   arguing  as in Remark \ref{rem:lebesgue}. Indeed, setting  $g_n:= w_n\widetilde v_{ i,n}$,
by \eqref{convRiscalateNeiMax} and \eqref{tesiLemmalimab} one has $g(y):= U (y)\left(\sum_{j=1}^2\frac{a^i_j y_j}{8+|y|^2}+ b^i\frac{8-|y|^2}{8+|y|^2}\right)$.
Moreover, by the local uniform convergence \eqref{convRiscalateNeiMax} of $w_n$  and its estimate \eqref{prelebesgue}, one can take 
\[h(y):=
\left\{
\begin{array}{lr}
C &\quad\mbox{ for }|y|\leq R_{\gamma}\\
\left(4-\frac{\gamma}{2}\right)|\log|y||+\widetilde C_{\gamma} &\quad \mbox{ for } R_{\gamma}<|y|\leq\frac{r}{\varepsilon_n}
\end{array}
\right.\]
and finally choosing $\gamma=2$,  Remark \ref{rem:lebesgue} applies.
%
%
%
%
%
%
% and arguing separately  for the integral on $B_{R_{\gamma}}(0)$ and on $B_{\frac{r}{\varepsilon_{n}}}(0)\setminus B_{R_{\gamma}}(0)$. Indeed by \eqref{convRiscalateNeiMax} and \eqref{tesiLemmalimab} 
%\[f_n(y):=\left(1+\frac{w_{n}(y)}{p_n}\right)^{{p_n}-1} w_{n}(y)\widetilde{v}_{i,n}(y)\underset{n\rightarrow +\infty}{\longrightarrow} e^{U(y)}U(y)\left(\sum_{j=1}^2\frac{a^i_j y_j}{8+|y|^2}+ b^i\frac{8-|y|^2}{8+|y|^2}\right)\]
%\textcolor{red}{in $C^1(\overline{B_{R_{\gamma}}(0)})$}\edz{\textcolor{red}{di questa convergenza non sono sicura, lo motiverei come ho scritto in \textcolor{blue}{blu}\\$\;$\\inoltre non sto applicando esattamente il remark perche l'insieme non e' quello...}} \textcolor{blue}{pointwise, moreover by \eqref{lebesgue} and \eqref{convRiscalateNeiMax} $ \|f_{n}\|_{L^{\infty} ( B_{R_{\gamma}}(0) ) }\leq C$  and so we can apply the dominated convergence theorem  and pass to the limit in the integral on $B_{R_{\gamma}}(0)$.} For the integral  on $B_{\frac{r}{\varepsilon_{n}}}(0)\setminus B_{R_{\gamma}}(0)$ we can argue \textcolor{red}{as in Remark \ref{rem:lebesgue}}. Indeed, setting  $g_n:= w_n\widetilde v_{i,n}$,
%by \eqref{convRiscalateNeiMax} and \eqref{tesiLemmalimab} one has $g(y):= U (y)\left(\sum_{j=1}^2\frac{a^i_j y_j}{8+|y|^2}+ b^i\frac{8-|y|^2}{8+|y|^2}\right)$   
% and by \eqref{prelebesgue}, choosing $\gamma=2$, one can take $h(y):=3|\log|y||+\widetilde C_2$.  
%
%
%
\\
Substituting \eqref{Max} into \eqref{ondina}, we get 
\begin{eqnarray}\label{ago}
p_n^2\int_\Omega u_{p_n}^{{p_n}-1}v_{i,n} dx&=&\int_{\R^2}e^{U(y)}U(y)\left(\sum_{j=1}^2\frac{a^i_j y_j}{8+|y|^2}+ b^i\frac{8-|y|^2}{8+|y|^2}\right)dy + o(1)
\nonumber
\\
&\overset{\eqref{definizioneU}}{=}&-8\pi b^i+o(1).
\end{eqnarray}
As a consequence, using the Green's representation formula, for $x\neq x_\infty$, we can write
\begin{eqnarray*}
	{p_n} v_{i,n}(x)&=&{p_n}\lambda_{i,n}\int_\Omega G(x,y) {p_n} u_{p_n}^{{p_n}-1}(y)v_{i,n}(y) dy\\
	&=&  \lambda_{i,n} G(x,x_n)p_n^2\int_\Omega u_{p_n}^{{p_n}-1}(y)v_{i,n}(y) dy+\\
	& &+\underbrace{p_n^2 \lambda_{i,n} \int_\Omega (G(x,y)-G(x,x_n)) u_{p_n}^{{p_n}-1}(y)v_{i,n}(y) dy}_{=:I_{i,n}(x)}\\
	&\stackrel{\eqref{ago}}{=}&-8\pi b^i G(x,x_n)(1+o(1))+I_{i,n}(x)
	\\
	&\stackrel{\eqref{convptimax}}{=}&-8\pi b^i G(x,x_\infty)(1+o(1))+I_{i,n}(x).
\end{eqnarray*} 
To get \eqref{riorganizzazione1} it is enough to show that $I_{i,n}(x)=o(1)$ in $C^1_{loc}(\overline\Omega\setminus\{x_{\infty}\})$.\\
For any $x\in\overline\Omega\setminus\{x_\infty\}$ we can chose $\delta\in(0,r)$ (where $r$ is as in the statement of Theorem \ref{teoAsymptotic}) such that $x\notin B_{2\delta}(x_\infty)\subset\Omega$ and we can split $\Omega$ in three pieces: $\Omega\setminus B_\delta(x_n)$, $B_\delta(x_n)\setminus B_{p_n^{-2}}(x_n)$, $B_{p_n^{-2}}(x_n)$. Integrating separately in the three regions we obtain:
\begin{eqnarray}\label{set1}
&& p_n^2\lambda_{i,n}\int_{\Omega\setminus B_\delta(x_n)}|G(x,y)-G(x,x_n)| u_{p_n}^{{p_n}-1}(y)|v_{i,n}(y)| dy
\nonumber
\\
&&\qquad\qquad\stackrel{|y-x_n|
	\geq \delta}{\leq} p_n^2 C \|u_{p_n}\|^{{p_n}-1}_{L^\infty(\Omega\setminus B_{\tfrac{\delta}{2}}(x_\infty))}\overset{\eqref{utile}}{=}o(1).
\end{eqnarray}
Next, by scaling and applying \eqref{lebesgue} with $\gamma=1$, we get:
\begin{eqnarray}\label{set2}
&&\!\!\!\!\!\!\! \!\!\!\!\!\!\! \!\!\!\!\!\!\! \!\!\!\!\!\!\! \!\!\!\!\!\!\! p_n^2\lambda_{i,n}\int_{B_\delta(x_n)\setminus B_{\tfrac{1}{p_n^2}}(x_n)}|G(x,y)-G(x,x_n)| u_{p_n}^{{p_n}-1}(y)|v_{i,n}(y)| dy=
\nonumber\\
&\leq&
{p_n}C\int_{B_{\tfrac \delta{\varepsilon_n}}(0)\setminus B_{\tfrac{1}{p_n^2\varepsilon_n}}(0)}(|G(x,x_n+\varepsilon_n z)|+|G(x,x_n)|) \left(1+\frac{w_n(z)}{p_n}\right)^{{p_n}-1} dz
\nonumber\\
&\overset{\eqref{lebesgue}}{\leq}&
p_n\int_{B_{\frac{\delta}{\varepsilon_n}}(0)\setminus B_{\frac{1}{p_n^2\varepsilon_n}}(0)}(|G(x,x_n+\varepsilon_n z)|+|G(x,x_n)|)\frac{C}{1+|z|^{3}}dz
\nonumber\\
&\leq&
p_n\int_{B_{\frac{\delta}{\varepsilon_n}}(0)\setminus B_{\frac{1}{p_n^2\varepsilon_n}}(0)}(|G(x,x_n+\varepsilon_n z)|+|G(x,x_n)|)\frac{C}{|z|^{3}}dz
\nonumber\\
&\leq& 
p_n^7\varepsilon_n^3C\int_{B_{\frac{\delta}{\varepsilon_n}}(0)\setminus B_{\frac{1}{p_n^2\varepsilon_n}}(0)}(|G(x,x_n+\varepsilon_n z)|+|G(x,x_n)|)dz
\nonumber\\
&\leq& 
p_n^7 \varepsilon_n C\int_{B_{\delta}(x_n)}(|G(x,y)|+|G(x,x_n)|)dy
\nonumber\\
&\overset{\eqref{varep1}}{=}& \frac{ C p_n^{\frac{13}{2}}}{\|u_{p_n}\|_{L^{\infty}(\Omega)}^{\frac12 (p_n-1)}} \int_{B_{\delta}(x_n)}(|G(x,y)|+|G(x,x_n)|)dy
\nonumber\\
&\overset{G(x,\, \cdot\,)\in L^1}{\leq}&
\frac{C p_n^{\frac{13}{2}}}{\|u_{p_n}\|_{L^{\infty}(\Omega)}^{\frac12 (p_n-1)}}\underset{n\to+\infty}{\longrightarrow} 0,
\end{eqnarray}
where the last convergence is due to \eqref{sqrte}.
At last, recalling that $x\not\in B_{2\delta}(x_\infty)$:
\begin{equation}\label{set3}
\begin{aligned}
p_n^2\lambda_{i,n}&\int_{B_{\tfrac{1}{p_n^2}}(x_n)}|G(x,y)-G(x,x_n)| u_{p_n}^{{p_n}-1}(y)|v_{i,n}(y)| dy\\
&\leq p_n^2C \sup_{\xi\in B_{\tfrac{1}{p_n^2}}(x_n)}|\nabla G(x,\xi)|\int_{B_{\tfrac{1}{p_n^2}}(x_n)}|y-x_n| u_{p_n}^{{p_n}-1}(y) dy\\
&\leq  C\sup_{\xi\in B_{\tfrac{1}{p_n^2}}(x_n)}|\nabla G(x,\xi)|\int_\Omega u_{p_n}^{{p_n}-1}(y) dy\overset{\eqref{bound_int_p-1}}{\leq} \frac{C}{{p_n}}\sup_{\xi\in B_{\tfrac{1}{p_n^2}}(x_n)}|\nabla G(x,\xi)|\underset{n\to+\infty}{\longrightarrow 0}.
\end{aligned}
\end{equation}
Combining \eqref{set1}, \eqref{set2} and \eqref{set3} we get that $I_{i,n}(x)=o(1)$. It is not difficult to see that the convergence is $C^0_{loc}(\overline\Omega\setminus\{x_{\infty}\})$, the uniform convergence of the derivatives $p_n\frac{\partial v_{i,n}}{\partial x_j}$  may be done in a similar way, so we omit it.\\
\\\\\\
{\emph{Step 2. We derive \eqref{riorganizzazione2}.}}\\	
By the integral identity \eqref{integralidentity} with $y=x_n$, we have
\begin{equation}\label{ago2}
p_n\int_{\partial\Omega}(x-x_n)\cdot\nabla u_{p_n}\frac{\partial v_{i,n}}{\partial\nu}d\sigma_x=(1-\lambda_{i,n})p_n^2\int_\Omega  u_{p_n}^{{p_n}-1}v_{i,n}\left((x-x_n)\cdot\nabla u_{p_n}+\frac{2}{{p_n}-1}u_{p_n}\right)dx.
\end{equation}
By \eqref{convptimax}, Proposition \ref{prop:assurdob} and \eqref{convergenzapup} we can determine the  rate of decay of the l.h.s. of \eqref{ago2}:
\begin{eqnarray}\label{ago3}
p_n\int_{\partial\Omega}(x-x_n)\cdot\nabla u_{p_n}\frac{\partial v_{i,n}}{\partial\nu}d\sigma_x\!\!\!&=&\!\!\!-\frac{64\pi^2\sqrt{e}b^i}{p_n}(1+o(1))\int_{\partial \Omega}(x-x_\infty)\cdot\nu(x)\left(\frac{\partial G}{\partial\nu}(x,x_\infty)\right)^2 d\sigma_x\nonumber\\
&\overset{\eqref{Green1}}{=}&\!\!\!-\frac{32\pi\sqrt{e}b^i}{p_n}(1+o(1))
\end{eqnarray}
On the other hand recalling the definition of the rescaled functions $w_n:=w_{1,n}$ (see \eqref{defRiscalataMax}) and $\widetilde u_{p_n}$ (see \eqref{utilde})  we have
\begin{equation}
\label{ago4}
\begin{aligned}
&p_n^2\int_\Omega  u_{p_n}^{{p_n}-1}(x)v_{i,n}(x)\left((x-x_n)\cdot\nabla u_{p_n}+\frac{2}{{p_n}-1}u_{p_n}\right)dx=\\
&\overset{(\star)}{=}p_n^2\int_{B_r(x_n)} u_{p_n}^{{p_n}-1}(x)v_{i,n}(x)\left((x-x_n)\cdot\nabla u_{p_n}+\frac{2}{{p_n}-1}u_{p_n}\right)dx+o(1)\\
&=\int_{B_{\frac{r}{\varepsilon_n}}(0)}\left(
1+\frac{w_n(y)}{p_n}
\right)^{{p_n}-1}\!\!\! p_n\widetilde  v_{i,n}(y)\left(y\cdot\nabla \widetilde u_{p_n}(y)+\frac{2}{{p_n}-1}\widetilde u_{p_n}(y)\right)dy+o(1),
\end{aligned}
\end{equation}
where $r$ is as in the statement of Theorem \ref{teoAsymptotic} and in $(\star)$ we have used that $B_{\frac{r}{2}}(x_\infty)\subseteq B_{r}(x_n)$ for $n$ large by \eqref{convptimax}, so
that
\begin{eqnarray*}
	&&	\!\!\!\!\!\!\!\!\!\!\!\!\!	\!\!\!\!\!\!\!\!\!\!\!\!\!
		\!\!\!\!\!\!\!\!\!\!\!\!\!\!\!\!\!\!\!\!\!\!\!\!\!\!\!
		\left|p_n^2\int_{\Omega\setminus B_r(x_n)} 
	u_{p_n}^{{p_n}-1}(x)v_{i,n}(x)\left((x-x_n)\cdot\nabla u_{p_n}+\frac{2}{{p_n}-1}u_{p_n}\right)dx\right|\leq
	\\
	&\leq& p_n^2\int_{\Omega\setminus B_{\frac{r}{2}}(x_\infty)} 
	u_{p_n}^{{p_n}-1}(x)\left|(x-x_n)\cdot\nabla u_{p_n}+\frac{2}{{p_n}-1}u_{p_n}\right|dx
	\\
	&\overset{\eqref{P4}+\eqref{boundLoo}}{\leq}& |\Omega| p_n\|u_{p_n}\|^{p_n-1}_{L^\infty(\Omega\setminus B_{\tfrac r2}(x_\infty))}C\overset{\eqref{utile} }{=}o(1).
	\end{eqnarray*}
Arguing as in Remark \ref{rem:lebesgue}, we can pass to the limit into \eqref{ago4}. Indeed, setting now $g_n(y):=p_n \widetilde v_{i,n} (y)\left( y\cdot\nabla \widetilde u_{p_n}(y)+\frac{2}{{p_n}-1}\widetilde u_{p_n}(y)\right)$,
by \eqref{convutilde}, \eqref{convnablautilde} and \eqref{tesiLemmalimab} one  has $g(y):= \sqrt{e}(y\cdot \nabla U(y)+2)\left(\sum_{j=1}^2\frac{a^i_j y_j}{8+|y|^2}+ b^i\frac{8-|y|^2}{8+|y|^2}\right)$   
and by \eqref{P4tilde} and \eqref{boundLootilde}, one can take $h$ to be a constant function. Hence we get
\begin{equation}\label{ago5}
\begin{aligned}
\int_{B_{\frac r{\varepsilon_n}}(0)}&\left(
1+\frac{w_n(y)}{p_n}
\right)^{{p_n}-1}\!\!\!p_n\widetilde v_{i,n}\,\left(y\cdot\nabla \widetilde u_{p_n}(y)+\frac{2}{{p_n}-1}\widetilde u_{p_n}(y)\right)dy
\\
&= \sqrt{e}\int_{\R^2}(y\cdot \nabla U(y)+2)\left(\sum_{j=1}^2\frac{a^i_j y_j}{8+|y|^2}+ b^i\frac{8-|y|^2}{8+|y|^2}\right)dy +o(1)
\\
&\overset{\eqref{definizioneU}}{=}128\sqrt e \int_{\R^2}\frac{8-|y|^2}{(8+|y|^2)^3}\left(\sum_{j=1}^2\frac{a^i_j y_j}{8+|y|^2}+b^i\frac{8-|y|^2}{8+|y|^2}\right) dy+o(1)\\
&=\frac{16}{3}\pi\sqrt e b^i +o(1).
\end{aligned}
\end{equation} 
\\
Inserting \eqref{ago3}, \eqref{ago4} and \eqref{ago5} into \eqref{ago2} we derive that
\[
-\frac{32\pi\sqrt{e}b^i}{p_n}(1+o(1))=(1-\lambda_{i,n})\frac{16}{3}\pi \sqrt{e}b^i (1+o(1)).
\]
Since by assumption $b^i \neq0$, we find \eqref{riorganizzazione2}.
\end{proof}

\

\

\section{Proof of \eqref{ele2} and  \eqref{ele1}   of Theorem \ref{teoAutofunzioniAutovalori} }
 \label{section:ele12}
This section is devoted to the proof of \eqref{ele2} and  \eqref{ele1}   of Theorem \ref{teoAutofunzioniAutovalori}.\\

First we show that both the eigenvalues $\lambda_{2,n}$ and $\lambda_{3,n}$ of \eqref{eigenvalueProblem} converge to $1$, as $n\to+\infty$, and we also obtain a first  estimate on their asymptotic behavior (Proposition \ref{prop:1stimaautov} below). Then we prove a convergence result for the corresponding rescaled eigenfunctions $\widetilde v_{2,n}$ and  $\widetilde v_{3,n}$ (Proposition \ref{thm:eigenfunctions}). Finally at the end of the section we prove  \eqref{ele2}  and \eqref{ele1}  of Theorem \ref{teoAutofunzioniAutovalori}.

\

\begin{proposition}\label{prop:1stimaautov}
	We have 
	\begin{eqnarray}
	&\lambda_{i,n}&\leq\;\,1+C\varepsilon_n^2\label{1stimaautov}\\
	&\lambda_{i,n}&\longrightarrow\;\, 1 \qquad\qquad\qquad\qquad\quad\mbox{as $n\to+\infty$}\label{autova1}
	\end{eqnarray}
	for $i=2,3$.
\end{proposition}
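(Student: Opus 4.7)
The plan is to establish \eqref{1stimaautov} by a Courant--Fischer argument with test functions modeled on the kernel elements of Lemma \ref{thm:autofunznucleo}, and to deduce \eqref{autova1} by classifying the bounded solutions of the limiting equation on $\R^2$. For the test functions I would take $\psi_{j,n}:=\partial_{x_j}u_{p_n}-h_{j,n}\in H^1_0(\Omega)$, $j=1,2$, where $h_{j,n}$ is the harmonic extension of $\partial_{x_j}u_{p_n}|_{\partial\Omega}$. By \eqref{convergenzapup}, $\partial_{x_j}u_{p_n}|_{\partial\Omega}=O(1/p_n)$, so $\|h_{j,n}\|_{C^1(K)}=O(1/p_n)$ on compact $K\subset\Omega$. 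Differentiating $-\Delta u_{p_n}=u_{p_n}^{p_n}$ yields
\[
-\Delta\psi_{j,n}-p_nu_{p_n}^{p_n-1}\psi_{j,n}=p_nu_{p_n}^{p_n-1}h_{j,n},
\]
so testing against $\psi_{j,n}$ gives
\[
R(\psi_{j,n})-1=\frac{\int p_nu_{p_n}^{p_n-1}h_{j,n}\psi_{j,n}\,dx}{\int p_nu_{p_n}^{p_n-1}\psi_{j,n}^2\,dx}.
\]
After rescaling, $\varepsilon_np_n\widetilde\psi_{j,n}\to-4\sqrt{e}\,\Phi_j$ in $C^1_{loc}(\R^2)$, with $\Phi_j(y)=y_j/(8+|y|^2)$, so the denominator is of order $1/(\varepsilon_n^2p_n^2)$ (a positive multiple of $\int e^U\Phi_j^2\,dy$) via Remark \ref{rem:lebesgue}. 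Taylor-expanding $h_{j,n}$ around $x_n$ and using the parity cancellations $\int e^U\Phi_j\,dy=0$ and $\int e^Uy_k\Phi_j\,dy=0$ for $k\ne j$ (together with $|h_{j,n}|+|\nabla h_{j,n}|\leq C/p_n$), the numerator is $O(1/p_n^2)$, giving $R(\psi_{j,n})\leq 1+C\varepsilon_n^2$.

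Forming $V_n:=\mathrm{span}\{u_{p_n},\psi_{1,n},\psi_{2,n}\}$, three-dimensional for $n$ large by comparison of rescaled leading behaviors, I would control the off-diagonal entries of the bilinear forms $N(v,w)=\int\nabla v\nabla w$ and $D(v,w)=\int p_nu_{p_n}^{p_n-1}vw$ restricted to $V_n$. Integration by parts and $\int\partial_{x_j}(u_{p_n}^{p_n+1})\,dx=0$ give $N(u_{p_n},\psi_{j,n})=-\int u_{p_n}^{p_n}h_{j,n}\,dx$ and $D(u_{p_n},\psi_{j,n})=-p_n\int u_{p_n}^{p_n}h_{j,n}\,dx$, both small using $|h_{j,n}|\leq C/p_n$ and $\int u_{p_n}^{p_n}=O(1/p_n)$; the cross-terms $N(\psi_{1,n},\psi_{2,n})$ and $D(\psi_{1,n},\psi_{2,n})$ vanish in the limit by parity. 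Combined with the diagonal estimates $R(u_{p_n})=1/p_n$ and $R(\psi_{j,n})\leq 1+C\varepsilon_n^2$, Courant--Fischer yields $\lambda_{3,n}\leq\sup_{V_n}R(\cdot)\leq 1+C\varepsilon_n^2$, hence also $\lambda_{2,n}\leq 1+C\varepsilon_n^2$.

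For \eqref{autova1}, the upper bound makes $\lambda_{i,n}$ bounded, so along a subsequence $\lambda_{i,n}\to\Lambda\in[0,1]$. By Lemma \ref{lemma:varphineq0}, $\widetilde v_{i,n}\to\widetilde v\not\equiv 0$ in $C^1_{loc}(\R^2)$ (up to a further subsequence), with $\widetilde v\in L^\infty(\R^2)$ solving $-\Delta\widetilde v=\Lambda e^U\widetilde v$. Since $e^U\in L^1(\R^2)$, $\widetilde v$ tends to a constant at infinity; via stereographic projection (under which $e^U$, up to affine rescaling, is the conformal factor of the round metric on $S^2$), $\widetilde v$ extends continuously to a smooth eigenfunction of $-\Delta_{S^2}$ with eigenvalue $2\Lambda$. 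The sphere spectrum $\{k(k+1):k\geq 0\}$ combined with $\Lambda\leq 1$ forces $\Lambda\in\{0,1\}$. In the case $\Lambda=0$, $\widetilde v$ is bounded harmonic on $\R^2$, hence equal to a constant $c$; the weighted orthogonality $\int u_{p_n}^{p_n}v_{i,n}\,dx=0$ (which follows from $\lambda_{i,n}\ne\lambda_{1,n}=1/p_n$ by the eigenfunction orthogonality) rescales to $\int V_n(1+w_n/p_n)\widetilde v_{i,n}\,dy=0$ and passes to the limit via Remark \ref{rem:lebesgue} as $8\pi c=0$, contradicting $\widetilde v\not\equiv 0$. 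Thus $\Lambda=1$, and since the limit is subsequence-independent, $\lambda_{i,n}\to 1$ for $i=2,3$.

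The main obstacle is preserving the sharp $O(\varepsilon_n^2)$ rate in the three-dimensional min--max: a naive Cauchy--Schwarz on the pencil cross-terms only yields $O(\varepsilon_n)$. Recovering the sharper exponent relies on exploiting simultaneously the oddness of $\Phi_j$ (against the radial limit $e^U$ of $V_n$) and the identity $\nabla u_{p_n}(x_n)=0$, which removes the first-order Taylor term of $u_{p_n}$ at the maximum and thereby forces the non-radial part of $V_n$ to be small to high order. The stereographic classification used in the convergence step can equivalently be carried out by an ODE separation-of-variables argument in polar coordinates in $\R^2$.
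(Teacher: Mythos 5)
Your overall strategy mirrors the paper's: a Courant--Fischer upper bound built from test functions modeled on $\partial_{x_j}u_{p_n}$, followed by a limit-classification argument (stereographic projection on the sphere plus the orthogonality $\int u_{p_n}^{p_n}v_{i,n}\,dx=0$) to rule out $\Lambda<1$. The second half of your proposal is essentially the paper's Step~4 and is correct (the paper has a typo writing $S^1$ instead of $S^2$). Your first half is a genuinely different implementation of the same idea: the paper localizes $\partial_{x_j}u_{p_n}$ by a cut-off $\phi_n$ supported near $x_n$, whereas you globally correct it by subtracting the harmonic extension $h_{j,n}$ of its boundary trace. Your identity $R(\psi_{j,n})-1=\frac{\int p_n u^{p-1}h_{j,n}\psi_{j,n}}{\int p_nu^{p-1}\psi_{j,n}^2}$ is a clean alternative to the paper's explicit computation of $A_{n,i},B_{n,i},C_{n,i}$, and the numerator is indeed $O(1/p_n^2)$: one can avoid the Taylor/parity argument entirely by writing $\int p_nu^{p-1}h_j\partial_ju=\int h_j\,\partial_j(u^{p})=-\int(\partial_jh_j)\,u^{p}$ and using $\|h_j\|_{C^1}=O(1/p_n)$, $\int u^{p}=O(1/p_n)$.

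However, there is a gap in how you close the three-dimensional min--max, and you half-acknowledge it yourself. Your Step 2 concludes directly that $\sup_{V_n}R\le 1+C\varepsilon_n^2$, while your final paragraph concedes that Cauchy--Schwarz on the cross-terms only yields $O(\varepsilon_n)$; the proposed repair via \textquotedblleft oddness of $\Phi_j$\textquotedblright\ and $\nabla u_{p_n}(x_n)=0$ does not actually address where the $O(\varepsilon_n)$ term comes from. The actual mechanism, used implicitly in the paper (this is what the bound $|a_{0,n}p_n|\le C$ encodes), is that for $v=a_0u_{p_n}+\sum a_j\psi_{j,n}$ the numerator of $R(v)-1$ contains the coefficient $a_0^2\bigl[\int|\nabla u|^2-p_n\int u^{p+1}\bigr]=(1-p_n)a_0^2\int u^{p+1}\approx -8\pi e\,a_0^2$, which is \emph{large and negative}. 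The dangerous cross-term is $2a_0\sum a_j\bigl[N-D\bigr](u,\psi_j)=2a_0\sum a_j(p_n-1)\int u^{p}h_j=O(|a_0||a|/p_n)$, and Young's inequality absorbs it into $-8\pi e\,a_0^2$ at the cost of an $O(|a|^2/p_n^2)$ remainder, which is of the same size as the diagonal $\psi$-contribution $\sum_{j,k}a_ja_k[N-D](\psi_j,\psi_k)=O(|a|^2/p_n^2)$. Dividing by the denominator, whose $\psi$-block is $\sim c\,|a|^2/(\varepsilon_n^2p_n^2)$ with $c>0$, gives the claimed $O(\varepsilon_n^2)$ uniformly over $(a_0,a_1,a_2)$. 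So the proof can be pushed through along your lines, but the key quantitative input is the negativity and size of the $a_0^2$ coefficient (i.e. that $u_{p_n}$ is already an eigenfunction with eigenvalue $1/p_n\ll 1$), not parity of $\Phi_j$ or $\nabla u_{p_n}(x_n)=0$. You should replace your heuristic closing remark by this completion, or otherwise there is a real gap between your Step 2 claim and what you have established.
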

\begin{proof}
By the variational characterization of the eigenvalues we have
	\begin{equation}\label{caratt}
	\lambda_{i,n}= \inf_{\substack{
			W\subset H^1_{0}(\Omega)\\ dim W=i}}   \max_{\substack{v\in W\\v\neq 0} }\  \frac{\int_{\Omega}|\nabla v|^2dx}{p_n\int_{\Omega}u_{p_n}^{p_n-1}v^2dx}
	\end{equation}
	Observe that the functions
	$\frac{\partial u_p}{\partial x_1}$, $\frac{\partial u_p}{\partial x_2}$
	solve the equation in \eqref{eigenvalueProblem} when $\lambda=1$, but not the boundary conditions, hence they are almost in the $Ker(L_p)$. We cut-off  and use them as test functions in order to estimate the eigenvalues $\lambda_{i,n}$.	Let  	$r>0$ be as in Theorem \ref{teoAsymptotic} and let $\widetilde\phi\in C^{\infty}_0(B_{r}(0))$ be such that $\widetilde\phi\equiv 1$ in $B_{\frac r2}(0)$, $0\leq\widetilde\phi\leq 1$ in 
	$B_{r}(0)$. Let us define the  functions of $H^1_0(\Omega)$
	\[
	\psi_{i,n}:= \frac{\partial u_{p_n}}{\partial x_i}\phi_n  \quad  i=1,2
	\]
	where 
$\phi_n(x):=\widetilde\phi(x-x_n)$ and  $x_n$ is as in \eqref{ptimaxloc1}, and let us denote by
	\[W_i:=span\{u_{p_n},\ \psi_{j,n},\ j=1,\ldots, i-1\},\quad i=2,3.\]
It is easy to see (similarly as in \cite[Lemma 3.1]{GrossiPacellaMathZ}) that the functions
	$u_{p_n}$, $\psi_{1,n}$, $\psi_{2,n}$ are linearly independent for $n$ sufficiently large, 
	then $dim W_i=i$. As a consequence, by \eqref{caratt}, it follows that 
	\begin{equation}\label{lambdaiMinoreMax}\lambda_{i,n}\leq \max_{\substack{v\in W_i\\v\neq 0} } \frac{\int_{\Omega}|\nabla v|^2dx}{p_n\int_{\Omega}u_{p_n}^{p_n-1}v^2dx}. 
	\end{equation}
	Let us evaluate separately $\int_{\Omega}|\nabla v|^2dx$ and $p_n\int_{\Omega}u_{p_n}^{p_n-1}v^2dx$ for a function $v\in W_i$, i.e. for
	\[
	v=a_0 u_{p_n} + \sum_{j=1}^{i-1}a_j\psi_{j,n}= a_0 u_{p_n} + \phi_n z_n
	\]
	where
	\begin{equation}\label{defzp}
	z_n:= \sum_{j=1}^{i-1}a_j\frac{\partial u_{p_n}}{\partial x_j}
	\end{equation}
	and $a_0, a_1, \ldots a_{i-1}\in\R$.
	We have
	\begin{eqnarray}\label{denominatore}
	p_n\int_{\Omega}u_{p_n}^{p_n-1}v^2dx&=&p_na_0^2\int_{\Omega}u_{p_n}^{p_n+1}dx+ 2p_na_0\int_{\Omega}u_{p_n}^{p_n}\phi_n z_n\,dx+ p_n \int_{\Omega}u_{p_n}^{p_n-1}\phi_n^2z_n^2\,dx
	\nonumber\\
	&=&p_na_0^2\int_{\Omega}u_{p_n}^{p_n+1}dx+ 2p_na_oA_{n,i} +C_{n,i} 
	\end{eqnarray}
	Moreover
	\begin{eqnarray}\label{numeratore}
	\int_{\Omega}|\nabla v|^2\,dx &=&  \int_{\Omega}|\nabla a_0 u_{p_n} + \phi_n z_n|^2\,dx
	\nonumber
	\\
	& =  & 
	a_0^2\int_{\Omega}|\nabla u_{p_n}|^2dx + 2 a_0\int_{\Omega}\nabla  u_{p_n} \nabla (\phi_n z_n)dx + \int_{\Omega}|\nabla (\phi_n z_n)|^2dx\nonumber
	\\
	&\overset{\eqref{problem}}{ =}  & 
	a_0^2\int_{\Omega} u_{p_n}^{p_n+1}dx + 2 a_0\underbrace{\int_{\Omega}\nabla  u_{p_n} \nabla (\phi_n z_n) dx}_{=:I_n} + \underbrace{\int_{\Omega}|\nabla (\phi_n z_n)|^2dx}_{=:II_n}.
	\end{eqnarray} 
	Multiplying \eqref{problem} by $\phi_n z_n$ and integrating we have
	\begin{eqnarray}\label{altroPezzo}
	I_n=\int_{\Omega}\nabla  u_{p_n} \nabla (\phi_n z_n)dx =\int_{\Omega}u_{p_n}^{p_n}\phi_n z_ndx.
	\end{eqnarray}
Moreover $z_n$ satisfies the equation 
	\[-\Delta z_n=p_nu_{p_n}^{p_n-1}z_n \ \mbox{ in }\Omega, \]
	multiplying it by $\phi_n^2z_n$ and integrating we have
	\begin{equation}\label{beto}
	\int_{\Omega}\phi_n^2 |\nabla z_n|^2 dx+ 2\int_{\Omega}\phi_n z_n \nabla\phi_n\cdot\nabla z_ndx = p_n\int_{\Omega} u_{p_n}^{p_n-1}\phi_n^2z_n^2dx
	\end{equation}
	and then
	\begin{eqnarray}\label{unPezzo}
	II_n&=&\int_{\Omega}|\nabla (\phi_n z_n)|^2 dx= 
	\int_{\Omega}|\nabla \phi_n|^2 z_n^2dx+\int_{\Omega}\phi_n^2|\nabla z_n|^2dx+ 2\int_{\Omega}
	\phi_n z_n \nabla\phi_n\cdot\nabla z_ndx
	\nonumber
	\\
	&\overset{\eqref{beto}}{=} & \int_{\Omega}|\nabla \phi_n|^2 z_n^2dx+ p_n\int_{\Omega} u_{p_n}^{p_n-1}\phi_n^2z_n^2dx.
	\end{eqnarray}
	Substituting \eqref{unPezzo} and \eqref{altroPezzo} into \eqref{numeratore} we then have
	\begin{equation}\label{numeratore2}
	\int_{\Omega}|\nabla v|^2dx=a_0^2\int_{\Omega}u_{p_n}^{p_n+1} dx+ 2 a_0\int_{\Omega}  u_{p_n}^{p_n} \phi_n z_n dx + \int_{\Omega}|\nabla \phi_n|^2 z_n^2dx+ p_n\int_{\Omega} u_{p_n}^{p_n-1}\phi_n^2z_n^2dx.
	\end{equation}
	By \eqref{numeratore2} and \eqref{denominatore} we get
	\begin{eqnarray} \label{primoConto}
	&&\max_{\substack{v\in W_i\\v\neq 0} } \frac{\int_{\Omega}|\nabla v|^2dx}{p_n\int_{\Omega}u_{p_n}^{p_n-1}v^2dx}=
	\nonumber
	\\
	&=&
	\max_{\substack{a_0,\ldots, a_{i-1}\in\mathbb R} } 1+\frac{(1-p_n)a_0^2\int_{\Omega}u_{p_n}^{p_n+1}dx + 2(1-p_n) a_0 A_{n,i} + B_{n,i}}{p_na_0^2\int_{\Omega}u_{p_n}^{p_n+1}dx+ 2p_na_0 A_{n,i}+ C_{n,i}},
	\end{eqnarray}
	where
\begin{eqnarray}&&\label{defAnn}A_{n,i}:=\int_{\Omega}  u_{p_n}^{p_n} \phi_n \sum_{j=1}^{i-1}a_j\frac{\partial u_{p_n}}{\partial x_j} dx, 
				\\
&&\label{defBnn} B_{n,i}:=\int_{\Omega}|\nabla \phi_n|^2\left(\sum_{j=1}^{i-1}a_j\frac{\partial u_{p_n}}{\partial x_j}\right)^2dx, \\ && \label{defCnn} C_{n,i}:=p_n\int_{\Omega}u_{p_n}^{p_n-1}\phi_n^2\left(\sum_{j=1}^{i-1}a_j\frac{\partial u_{p_n}}{\partial x_j}\right)^2dx.\end{eqnarray} 
\\\\
\emph{Step 1. We estimate the terms $A_{n,i}$, $B_{n,i}$ and $C_{n,i}$.}
	\\\\
	Integrating by part,  since $\nabla\phi_n$ vanishes in $B_{\frac r2}(x_n)$ 
	\begin{eqnarray*}
		A_{n,i}&=&
		\frac{1}{p_n+1}\int_{\Omega}\phi_n \left(\sum_{j=1}^{i-1}\frac{\partial (u_{p_n}^{p_n+1})}{\partial x_j}\right)dx
		=
		-\frac{1}{p_n+1}\int_{\Omega}\left(\sum_{j=1}^{i-1}\frac{\partial \phi_n }{\partial x_j}\right)u_{p_n}^{p_n+1}dx\nonumber\\
		&=&
		-\frac{1}{p_n+1}\int_{\Omega\cap \{|x-x_n|\geq\frac r2\}}\left(\sum_{j=1}^{i-1}\frac{\partial \phi_n }{\partial x_j}\right)u_{p_n}^{p_n+1}dx,
	\end{eqnarray*}
	so, since $x_n\rightarrow x_{\infty}$ and using the convergence in \eqref{convergenzapup}
	\begin{eqnarray}
	\label{stimaTermineA}
	|A_{n,i}|&\leq & 
	\frac{1}{p_n+1}\int_{\Omega\cap \{|x-x_n|\geq\frac r2\}}\left(\sum_{j=1}^{i-1}\left|\frac{\partial \phi_n }{\partial x_j}\right|\right)u_{p_n}^{p_n+1}dx
	\nonumber
	\\
	&\overset{x_n\rightarrow x_{\infty}}{\leq} & \frac{C}{p_n(p_n+1)}\, p_n\int_{\Omega\cap \{ |x-x_{\infty}|\geq\frac r4\}}u_{p_n}^{p_n+1}dx
	\nonumber 
	\\
	&\overset{\eqref{P3}}{\leq}& \frac{C_{r}}{p_n^3(p_n+1)}\int_{\Omega\cap \{ |x-x_{\infty}|\geq\frac r4\}}(p_nu_{p_n})^2dx\overset{\eqref{convergenzapup}}{=}
	O\left(\frac{1}{p_n^4}\right).
	\end{eqnarray}
	Similarly, using again that $x_n\rightarrow x_{\infty}$  and  the convergence in \eqref{convergenzapup} 
	\begin{eqnarray}\label{stimaTermineB}
	B_{n,i}&=&\frac{1}{p_n^2}
	\int_{\Omega\cap \{|x-x_n|\geq\frac r2\}}|\nabla \widetilde\phi(x-x_n)|^2 \left(\sum_{j=1}^{i-1}a_j p_n\frac{\partial u_{p_n}}{\partial x_j}\right)^2 dx
	\nonumber\\
	&\overset{\eqref{convergenzapup}}{=}& \frac{1}{p_n^2}(\widetilde C+o(1))
	\end{eqnarray}
	where
	\[\widetilde C:=64\pi^2 e\int_{\Omega}|\nabla \widetilde\phi(x-x_{\infty})|^2 \left(\sum_{j=1}^{i-1}a_j\frac{\partial G(x,x_{\infty})}{\partial x_j}\right)^2dx>0.\]
	Finally, by changing variable and recalling the definition of $w_n$
	\begin{eqnarray}\label{stimaTermineC}
	C_{n,i}&=&p_n\int_{\Omega}u_{p_n}^{p_n-1}\phi_n^2\left(\sum_{j=1}^{i-1}a_j\frac{\partial u_{p_n}}{\partial x_j}\right)^2dx
	\nonumber
	\\
	&=&
	\frac{\|u_{p_n}\|_{L^{\infty}(\Omega)}^{p_n-1}}{p_n}\int_{B_{\frac{r}{\varepsilon_n}}(0)}
	\left(1+\frac{w_n}{p_n} \right)^{p_n-1}\|u_{p_n}\|_{L^{\infty}(\Omega)}^{2}\widetilde\phi (\varepsilon_n y)^2\left(\sum_{j=1}^{i-1}a_j\frac{\partial w_n}{\partial y_j}\right)^2dy
	\nonumber
	\\
	&\overset{(\star)}{=}&\widehat C\, \frac{\|u_{p_n}\|_{L^{\infty}(\Omega)}^{p_n-1}}{p_n}(1+o_n(1))
	\end{eqnarray}
	with
	\[\widehat C:= e \int_{\mathbb R^2}e^U\left(\sum_{j=1}^{i-1}a_j\frac{\partial U}{\partial y_j}\right)^2>0,\] 
	where the passage to the limit in $(\star)$ can be justified arguing as in Remark \ref{rem:lebesgue}. Indeed, setting  $g_n:= \|u_{p_n}\|_{L^{\infty}(\Omega)}^{2}\widetilde \phi (\varepsilon_n y)^2\left(\sum_{j=1}^{i-1}a_j\frac{\partial w_n}{\partial y_j}\right)^2$,
	by \eqref{sqrte} and \eqref{convRiscalateNeiMax} one has $g:= e \left(\sum_{j=1}^{i-1}a_j\frac{\partial U}{\partial y_j}\right)^2$. Moreover, recalling the definition of $\widetilde u_{p_n}$ (see \eqref{utilde}), one has
	$\frac{\partial w_n}{\partial y_j}=\|u_{p_n}\|_{L^{\infty}(\Omega)}^{-1}p_n\frac{\partial \widetilde u_{p_n}}{\partial y_j}$, hence	
	\[\left|\frac{\partial w_n(y)}{\partial y_j}\right|\overset{\eqref{P4tilde}}{\leq}\frac{C}{\|u_{p_n}\|_{L^{\infty}(\Omega)}}\frac{1}{|y|}\overset{\eqref{sqrte}}{\leq} C \frac{1}{|y|} \quad \mbox{ for }y\in{\Omega}_n,\]
	moreover, for $R>0$ fixed, by \eqref{convRiscalateNeiMax}
	\[\left\|\frac{\partial w_n}{\partial y_j}\right\|_{L^{\infty}(B_R(0))}\leq C,\]
	as a consequence  we can take 
	\[h(y):= C \left\{
	\begin{array}{lr}
	1 &  \mbox{  for }|y|\leq R
	\\
	\frac{1}{|y|^2} & \mbox{ for }R<|y|\leq\frac{r}{\varepsilon_n},
	\end{array}
	\right.
	\] for a suitable constant $C>0$, and apply  Remark \ref{rem:lebesgue}.  
	\\\\\\
	\emph{Step 2. We prove that there exists $C_0>0$ such that
		\begin{equation}\label{stimaStep2}
		\max_{\substack{v\in W_i\\v\neq 0} }\  \frac{\int_{\Omega}|\nabla v|^2 dx}{p_n\int_{\Omega}u_{p_n}^{p_n-1}v^2dx}\geq 1+ C_0\varepsilon_n^2.
		\end{equation}}
	\\
	Choosing $(a_0,\ldots, a_{i-1})=(0,1,\ldots,1)$ we deduce by \eqref{primoConto}
	\[
	\max_{\substack{v\in W_i\\v\neq 0} }\  \frac{\int_{\Omega}|\nabla v|^2dx}{p_n\int_{\Omega}u_{p_n}^{p_n-1}v^2dx}\geq 1+ \frac{B_{n,i}}{C_{n,i}},
	\]
	where $B_{n,i}$ and $C_{n,i}$ are defined in \eqref{defBnn} and \eqref{defCnn} respectively.
	Hence using  the estimates \eqref{stimaTermineB} and \eqref{stimaTermineC}  we get
	\[
	\max_{\substack{v\in W_i\\v\neq 0} }\  \frac{\int_{\Omega}|\nabla v|^2dx}{p_n\int_{\Omega}u_{p_n}^{p_n-1}v^2dx}
	\geq 
	1+ \frac{\widetilde C(1+o_n(1))}{\widehat C p_n\|u_{p_n}\|_{L^{\infty}(\Omega)}^{p_n-1}(1+o_n(1))}\geq 1+\varepsilon_n^2C_0,
	\]
	where $C_0:=\frac{1}{2}\frac{\widetilde C}{\widehat C}$. 
	\\
	\\
	\\
	\emph{Step 3. Proof of \eqref{1stimaautov}.}
	\\
	Let us denote by $a_{0,n}, a_{1,n},\ldots, a_{i-1,n}$ the point of $\mathbb R^i$ where the quotient in \eqref{primoConto} achieves its maximum. By \eqref{primoConto}, the limit of the energy in \eqref{convergenzaenergia} and the estimates in \eqref{stimaTermineA}, \eqref{stimaTermineB} and \eqref{stimaTermineC} we derive
	\begin{eqnarray}\label{confrontoOrdini}
	\max_{\substack{v\in W_i\\v\neq 0} }\  \frac{\int_{\Omega}|\nabla v|^2dx}{p_n\int_{\Omega}u_{p_n}^{p_n-1}v^2dx}
	&=&
	1+\frac{-a_{0,n}^2(8\pi e+ o_n(1))-2a_{0,n}O\left(\frac{1}{p_n^3}\right)+\frac{\widetilde C}{p_n^2}(1+o(1))}{a_{0,n}^2(8\pi e+ o_n(1))+2a_{0,n}O\left(\frac{1}{p_n^3} \right)+\widehat C\frac{\|u_{p_n}\|_{L^{\infty}(\Omega)}^{p_n-1}}{p_n}(1+o_n(1))}
	\nonumber
	\\
	&=& 1+\varepsilon_n^2\, 
	\frac{-a_{0,n}^2p_n^2(8\pi e+ o_n(1))-2a_{0,n}p_nO\left(\frac{1}{p_n^2}\right)+\widetilde C(1+o(1))}{
		\varepsilon_n^2 a_{0,n}^2 p_n^2(8\pi e+ o_n(1))+2a_{0,n}p_n\varepsilon_n^2O\left(\frac{1}{p_n^2} \right)+ \widehat C(1+o_n(1))}.
	\end{eqnarray}
	As a consequence 
	\begin{equation}\label{bounda0}|a_{0,n} p_n|\leq C \end{equation}
	otherwise from \eqref{confrontoOrdini} one gets a contradiction with \eqref{stimaStep2}.
	From \eqref{lambdaiMinoreMax}, \eqref{confrontoOrdini} with \eqref{bounda0} we get the conclusion
	\[\lambda_{i,n}\leq1+\varepsilon_n^2 C.\]	
	\emph{Step 4. Proof of \eqref{autova1}.}
	\\
	By \eqref{1stimaautov} it is enough to prove
	\[\lambda_{2,n}\rightarrow 1 \mbox{ as }n\rightarrow +\infty.\]
	Observe that \[\frac{1}{p_n}=\lambda_{1,n}\leq\lambda_{2,n}\overset{\eqref{1stimaautov}}{\leq} 1+\varepsilon_nC\]
	Hence, up to a subsequence, $\lambda_{2,n}\rightarrow\Lambda\in [0,1]$. Assume by contradiction that 
	\begin{equation}\label{LambdaMin1}
	\Lambda<1.
	\end{equation}
	Let us consider an eigenfunction $v_{2,n}$ related to $\lambda_{2,n}$, which solves \eqref{eigenvalueProblem}, and let $\widetilde v_{2,n}$ be its rescaling defined as in \eqref{rescaledeigenfunct} and which solves \eqref{eqrescaledeigenfunct}.\\
	It is easy to show that $\nabla \widetilde v_{2,n}$ is uniformly bounded in $L^2(\mathbb R^2)$, indeed:
	\begin{eqnarray*}
		\int_{\mathbb R^2}|\nabla \widetilde v_{2,n}|^2dy
		&=&
		\int_{\Omega_n} |\nabla \widetilde v_{2,n}|^2dy
		\overset{\eqref{eqrescaledeigenfunct}}{=}
		\lambda_{2,n}
		\int_{\Omega_n}
		\left( 1+\frac{w_n}{p_n}\right)^{p_n-1}\!\!\!\!\!\widetilde v_{2,n}^2dy
		\leq  \int_{\Omega_n}
		\left( 1+\frac{w_n}{p_n}\right)^{p_n-1}\!\!\!\!\!\!\!\!\!dy
		\\
		&=&p_n\int_{\Omega} u_{p_n}^{p_n-1}dx
		\overset{\eqref{bound_int_p-1}}{\leq} C.
	\end{eqnarray*}
	So by the standard elliptic regularity theory $\widetilde v_{2,n}\rightarrow \widetilde v$ in $C^1_{loc}(\mathbb R^2)$ where $\widetilde v\neq 0$ by Lemma \ref{lemma:varphineq0} and it is a solution of the limit eigenvalue problem
	\begin{equation}\label{linearLiouville}
	\left\{
	\begin{array}{lr}
	-\Delta \widetilde v =\Lambda e^U\widetilde v\quad\mbox{ in }\mathbb R^2
	\\
	\|\widetilde v\|_{L^{\infty}(\R^2)}\leq 1.
	\end{array}
	\right.
	\end{equation}
	Taking the stereographic projection on the $S^1$ sphere, problem \eqref{linearLiouville} is  reduced to the eigenvalue problem for the Laplace-Beltrami operator $-\Delta_{S^1}$ with the same  eigenvalue $\Lambda$, by \eqref{LambdaMin1} it then follows,  that $\Lambda=0$ and so \begin{equation}\label{limcost}\widetilde v=const\neq 0.\end{equation}
	Observe also that
	\begin{eqnarray}\label{opicco}
	\int_{\Omega\setminus B_r(x_n)}u_{p_n}^{p_n}|v_{2,n}|dx &\leq&  |\Omega|
	\|u_{p_n}\|^{p_n-1}_{L^\infty(\Omega\setminus B_r(x_n))}\|u_{p_n}\|_{L^{\infty}(\Omega)}
	\nonumber\\
	&\overset{(\star)}{\leq}& 2 |\Omega|
	\|u_{p_n}\|^{p_n-1}_{L^\infty(\Omega\setminus B_{\frac{r}{2}}(x_\infty))}\|u_{p_n}\|_{L^{\infty}(\Omega)}
	\nonumber\\
	&\overset{\eqref{utile}+\eqref{boundLoo}}{=}& o(1),
	\end{eqnarray}
	where in $(\star)$ we have used that $B_{\frac{r}{2}}(x_{\infty})\subseteq B_r(x_n)$ for $n$ large, which is a consequence of \eqref{convptimax}. 
	By \eqref{limcost}, using \eqref{opicco} and the dominated convergence theorem (similarly as in Remark \ref{rem:lebesgue}, being $\|\widetilde v_{2,n}\|_{L^{\infty}(\Omega_n)}=1$), one has: 
	\begin{eqnarray*}
		p_n\int_{\Omega}\nabla \frac{u_{p_n}}{\|u_{p_n}\|_{L^{\infty}(\Omega)}}\nabla v_{2,n}dx &\overset{\eqref{problem}}{=}&\frac{p_n}{\|u_{p_n}\|_{L^{\infty}(\Omega)}}\int_{\Omega}u_{p_n}^{p_n}v_{2,n}dx
		\\
		&\overset{\eqref{opicco}}{=}&\frac{p_n}{\|u_{p_n}\|_{L^{\infty}(\Omega)}}\int_{B_r(x_n)}u_{p_n}^{p_n}v_{2,n}dx + o(1)
		\\
		&=&\int_{B_\frac{r}{\varepsilon_n}(0)}\left( 1+\frac{w_n}{p_n}\right)^{p_n} \widetilde v_{2,n}dy + o(1)
		\\
		&\overset{\eqref{convRiscalateNeiMax}}{=}& \widetilde v \int_{\R^2}e^U dy +o(1) \neq 0,
	\end{eqnarray*}
	but this is in contradiction with the orthogonality in $H^1_0(\Omega)$ of the eigenfunctions $v_{1,n}=\frac{u_{p_n}}{\|u_{p_n}\|_{L^{\infty}(\Omega)}}$ and $v_{2,n}$ (see \eqref{autofunzOrtogonali}).
\end{proof}

\

\begin{proposition} \label{thm:eigenfunctions}
	Let $\widetilde{v}_{i,n}$ be the rescaled eigenfunction defined in \eqref{rescaledeigenfunct}, we have:
	\begin{equation}\label{vtildeasenzab}
	\widetilde{v}_{i,n}(y)\longrightarrow\sum_{j=1}^2\frac{a^i_j y_j}{8+|y|^2}\qquad \mbox{as $n\to+\infty$\quad in $C^1_{loc}(\R^2)$}
	\end{equation}
	for $i=2,3$, for some vectors $a^i=(a_1^i,a_2^i)\neq0$ in $\R^2$,  $a^2$ and $a^3$  orthogonal in $\R^2$.	
\end{proposition}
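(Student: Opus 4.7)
The plan is to invoke the already established Lemma \ref{lemma:limab} and to pin down the coefficients $b^i$ and $a^i$. Since Proposition \ref{prop:1stimaautov} guarantees $\lambda_{i,n}\to 1$ for $i=2,3$, Lemma \ref{lemma:limab} immediately produces, for each such $i$, a triple $(a_1^i,a_2^i,b^i)\in\R^3\setminus\{(0,0,0)\}$ and a $C^1_{loc}$ limit profile
\[\widetilde v_{i,n}(y)\longrightarrow \sum_{j=1}^2\frac{a_j^i y_j}{8+|y|^2}+b^i\,\frac{8-|y|^2}{8+|y|^2}.\]
Three points then remain to be checked: (a) $b^i=0$ for $i=2,3$; (b) $a^i\neq(0,0)$; (c) $a^2\perp a^3$ in $\R^2$.

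For (a), I would compare the two available asymptotic rates for $\lambda_{i,n}-1$. From the very definition of $\varepsilon_n$ one has $p_n\varepsilon_n^2=\|u_{p_n}\|_\infty^{-(p_n-1)}$, and since $\|u_{p_n}\|_\infty\to\sqrt e>1$ by \eqref{sqrte}, this decays exponentially fast in $p_n$; in particular $\varepsilon_n^2=o(1/p_n)$. The upper bound \eqref{1stimaautov} then gives $\lambda_{i,n}-1\leq C\varepsilon_n^2=o(1/p_n)$. If $b^i$ were nonzero, Proposition \ref{prop:assurdob} would instead force $\lambda_{i,n}-1=(6/p_n)(1+o(1))$, contradicting the previous estimate. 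Hence $b^i=0$, and (b) follows automatically from the nontriviality clause in Lemma \ref{lemma:limab}.

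For (c), I would exploit the $H^1_0$-orthogonality \eqref{autofunzOrtogonali}. Multiplying the equation for $v_{2,n}$ by $v_{3,n}$, integrating by parts, and using $\lambda_{2,n}\to 1\neq 0$, one gets
\[p_n\int_\Omega u_{p_n}^{p_n-1}v_{2,n}v_{3,n}\,dx=0\qquad\text{for $n$ large.}\]
Splitting at $B_r(x_n)$, the outer part is controlled exactly as in \eqref{baba} by $p_n|\Omega|\,\|u_{p_n}\|^{p_n-1}_{L^\infty(\Omega\setminus B_{r/2}(x_\infty))}$, which is $o(1)$ by \eqref{utile}. On the inner part the rescaling $x=x_n+\varepsilon_n y$ together with the identity $p_n\varepsilon_n^2\|u_{p_n}\|_\infty^{p_n-1}=1$ turns the integral into
\[\int_{B_{r/\varepsilon_n}(0)}\Bigl(1+\frac{w_n(y)}{p_n}\Bigr)^{p_n-1}\widetilde v_{2,n}(y)\,\widetilde v_{3,n}(y)\,dy=o(1).\]
Since $|\widetilde v_{i,n}|\leq 1$, the integrand is dominated by $C_\gamma/(1+|y|^{4-\gamma})$ via Lemma \ref{lemma:lebesgue}, so Remark \ref{rem:lebesgue} lets us pass to the limit; using $b^2=b^3=0$ one obtains
\[0=\int_{\R^2}e^{U(y)}\Bigl(\sum_{j}\frac{a_j^2 y_j}{8+|y|^2}\Bigr)\Bigl(\sum_{k}\frac{a_k^3 y_k}{8+|y|^2}\Bigr)dy=64\sum_{j,k}a_j^2 a_k^3\int_{\R^2}\frac{y_j y_k}{(8+|y|^2)^4}\,dy.\]
The reflections $y_j\mapsto -y_j$ kill the off-diagonal contributions, while the two diagonal integrals coincide and are strictly positive, yielding $a_1^2 a_1^3+a_2^2 a_2^3=0$, i.e.\ $a^2\cdot a^3=0$.

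The delicate point is step (a): without the sharp comparison $\varepsilon_n^2=o(1/p_n)$ one could not rule out the radial ``dilational'' mode from $\widetilde v_{2,n},\widetilde v_{3,n}$, and the two translational modes would not be isolated from the fourth one, whose asymptotics is handled separately in Theorem \ref{teoAutofunzioniAutovalori}, (\ref{ele4})--(\ref{ele3}).
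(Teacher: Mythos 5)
Your proposal is correct and follows essentially the same route as the paper: invoke Lemma \ref{lemma:limab} via Proposition \ref{prop:1stimaautov}, rule out $b^i\neq0$ by playing the decay rate in \eqref{riorganizzazione2} of Proposition \ref{prop:assurdob} against the upper bound \eqref{1stimaautov}, and derive the orthogonality of $a^2,a^3$ from the $H^1_0$-orthogonality of eigenfunctions after rescaling and passing to the limit with Remark \ref{rem:lebesgue}. Your explicit observation that $\varepsilon_n^2=o(1/p_n)$ (since $\|u_{p_n}\|_\infty\to\sqrt e>1$) is the reason the two rates are incompatible is left implicit in the paper but is exactly the mechanism being used.
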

\begin{proof}
	Applying Proposition \ref{prop:1stimaautov} and Lemma \ref{lemma:limab} we get
the existence of $a^i_1$, $a^i_2$, $b^i\in\R$, $	(a^i_1,a^i_2,b^i)\neq(0,0,0)$ such that
	\[
	\widetilde{v}_{i,n}(y)\longrightarrow\sum_{j=1}^2\frac{a^i_j y_j}{8+|y|^2}+ b^i\frac{8-|y|^2}{8+|y|^2}\quad \mbox{ as } n\to+\infty \mbox{ in }C^1_{loc}(\R^2).
	\]
Assume by contradiction that $b^i\neq 0$, then by \eqref{riorganizzazione2} in Proposition \ref{prop:assurdob} we have that for $n$ sufficiently large
	\[
	\lambda_{i,n}\geq 1+\frac{3}{p_n},
	\]
	but this is in contradiction with the estimate \eqref{1stimaautov} in Proposition \ref{prop:1stimaautov}, hence necessarily $b^i=0$ and \eqref{vtildeasenzab} holds.\\
	Next we show the orthogonality of the vectors $a^2$ and $a^3$.\\
	By assumption $\int_\Omega \nabla v_{2,n} \nabla v_{3,n}dx=0$. Using equation \eqref{eigenvalueProblem} we get
	\[
	\int_\Omega p_n u_{p_n}^{p_n-1} v_{2,n}v_{3,n} dx=0,
	\]
	so by \eqref{utile}
	\[
	\int_{B_r(x_n)} p_n u_{p_n}^{p_n-1} v_{2,n}v_{3,n} dx+o(1)=0,
	\]
	where $r$ is as in the statement of Theorem \ref{teoAsymptotic}. Rescaling and recalling the definition of $w_n:=w_{1,n}$ (see \eqref{defRiscalataMax}) and of $\widetilde v_{i,n}$ (see \eqref{rescaledeigenfunct}) we obtain
	\[
	\int_{B_{\frac{r}{\varepsilon_n}}}\left(1+\frac{w_n}{p_n}\right)^{{p_n}-1}\widetilde v_{2,n}\widetilde v_{3,n} dz+o(1)=0.
	\]
	Using \eqref{convRiscalateNeiMax} and the convergence in  \eqref{vtildeasenzab}, arguing as in 
	Remark \ref{rem:lebesgue} with 
	$g_n:=\widetilde v_{2,n}\widetilde v_{3,n}$, $g:=\frac{(a_1^2 z_1+a_2^2 z_2)(a_1^3 z_1+a_2^3 z_2)}{(8+|z|^2)^2}$  and $h=1$ (since $\|\widetilde v_{i,n}\|_{L^{\infty}(\Omega_n)}=1$), we can pass to the limit and get
	\[
	\int_{\R^2} e^{U(z)}\frac{(a_1^2 z_1+a_2^2 z_2)(a_1^3 z_1+a_2^3 z_2)}{(8+|z|^2)^2}dz=0,
	\]
	hence, by \eqref{definizioneU},
	\[
	\sum_{h,k=1}^2 a_h^2 a_k^3\int_{\R^2}\frac{z_h z_k}{(8+|z|^2)^4}dz=0,
	\]
	which implies
	\[
	\sum_{h=1}^2 a_h^2 a_h^3=0,
	\]
namely that the vectors $a^2$ and $a^3$ are orthogonal in $\R^2$.	
\end{proof}

\

\subsection{Proof of \eqref{ele2} in Theorem \ref{teoAutofunzioniAutovalori}}	

\begin{proof}%[Proof of \eqref{ele2} in Theorem \ref{teoAutofunzioniAutovalori}]$\;$\\
  Let us chose
	\begin{equation}
	\label{rhop}
	\tau_n=o(1)\qquad\mbox{such that}\qquad \frac{\varepsilon_n}{\tau_n^{7}}\to 0.
	\end{equation}
Observe that	
	\begin{equation}\label{rhonuo}\frac{r}{\varepsilon_n}\geq\frac{\tau_n}{\varepsilon_n}\rightarrow +\infty.\end{equation}
	\\
	\\
	{\emph{Step.1 We show that
	\begin{equation}\label{tesinaSTep1}
	v_{i,n}= E_n+F_n+ o(\varepsilon_n) \quad \mbox{  in }C^1_{loc}(\overline\Omega\setminus\{x_{\infty}\}),\end{equation}
where
\begin{equation}\label{E_n}
E_n(x):=\lambda_{i,n} G(x,x_n) \int_{B_{\tau_n}(x_n)}{p_n} u_{p_n}^{p_n-1}(y) v_{i,n}(y) dy
\end{equation}
and 
\begin{equation}\label{F_n}
F_n(x):=\lambda_{i,n}\sum_{j=1}^2 \frac{\partial G}{\partial y_j}(x,x_n)\int_{B_{\tau_n}(x_n)}p_n u_{p_n}^{p_n-1}(y) v_{i,n}(y)(y-x_n)_j dy.
\end{equation}}}

\

	For any $x\neq x_\infty$ there exists $\delta\in(0,r)$ (where $r$ is as in the statement of Theorem \ref{teoAsymptotic}) such that $x\not\in B_{2\delta}(x_\infty)\subset\Omega$.
	
	Using the Green's representation formula we have
	\begin{eqnarray}\label{vip1}
	v_{i,n}(x)&=&\lambda_{i,n}\int_\Omega G(x,y)p_n u_{p_n}^{p_n-1}(y) v_{i,n}(y) dy\nonumber\\
	&=&\lambda_{i,n}\int_{\Omega\setminus B_\delta(x_n)} G(x,y)p_n u_{p_n}^{p_n-1}(y) v_{i,n}(y) dy\nonumber\\
	&&+\lambda_{i,n}\int_{B_\delta(x_n)\setminus B_{\tau_n}(x_n)} G(x,y)p_n u_{p_n}^{p_n-1}(y)v_{i,n}(y) dy\nonumber\\
	&&+\lambda_{i,n}\int_{B_{\tau_n}(x_n)} G(x,y)p_n u_{p_n}^{p_n-1}(y) v_{i,n}(y) dy\nonumber\\
	&=&A_{n}(x)+B_{n}(x)+C_{n}(x).
	\end{eqnarray}
	Let us estimate the three terms separately.
	\begin{eqnarray}
	\label{Iip}
	|A_{n}(x)|&:= & |\lambda_{i,n}\int_{\Omega\setminus B_\delta(x_n)} G(x,y)p_n u_{p_n}^{p_n-1}(y) v_{i,n}(y) dy|
	\nonumber 
	\\
	&\leq &
	C\int_{\Omega\setminus B_\delta(x_n)} |G(x,y)|p_n u_{p_n}^{p_n-1}(y)  dy
	\nonumber
	\\
	&	\leq & p_n\|u_{p_n}\|^{p_n-1}_{L^\infty(\Omega\setminus B_{\frac \delta2}(x_\infty))}\int_\Omega |G(x,y)| dy
	\nonumber
	\\
	&\leq& p_n(\frac{1}{\sqrt{p_n}})^{p_n-1}C=o(\varepsilon_n),
	\end{eqnarray}
	where in the second inequality we have used that for $p$ sufficiently large $\|u_{p_n}\|^{p_n-1}_{L^\infty(\Omega\setminus B_{\frac \delta2}(x_\infty))}\leq \frac{1}{\sqrt{p_n}}$ by \eqref{pu_va_a_zeroTeo} and that $G(x,\,\cdot\,)\in L^1_y(\Omega)$.
	\\
	For the term $B_{n}$ we rescale, use the definition of $w_n:=w_{1,n}$ (see \eqref{defRiscalataMax}), the estimate \eqref{lebesgue} in Lemma \ref{lemma:lebesgue} and get (choosing $\gamma=\frac12$):
	\begin{eqnarray}\label{IIip}
	|B_{n}(x)|&:=& |\lambda_{i,n}\int_{B_\delta(x_n)\setminus B_{\tau_n}(x_n)} G(x,y)p_n u_{p_n}^{p_n-1}(y)v_{i,n}(y) dy|
	\nonumber
	\\
	&\leq&\int_{B_{\tfrac{\delta}{\varepsilon_n}}(0)\setminus B_{\tfrac{\tau_n}{\varepsilon_n}}(0)}|G(x,x_n+\varepsilon_n z)|\left(1+\frac{w_n(z)}{p_n}\right)^{p_n-1}dz\nonumber\\
	&\stackrel{\eqref{lebesgue}}{\leq}&\int_{B_{\tfrac{\delta}{\varepsilon_n}}(0)\setminus B_{\tfrac{\tau_n}{\varepsilon_n}}(0)}|G(x,x_n+\varepsilon_n z)|\frac{C}{1+|z|^{7/2}}dz\nonumber\\
	&\leq&\int_{B_{\tfrac{\delta}{\varepsilon_n}}(0)\setminus B_{\tfrac{\tau_n}{\varepsilon_n}}(0)}|G(x,x_n+\varepsilon_n z)|\frac{C}{|z|^{7/2}}dz\nonumber\\
	&\leq&C\frac{\varepsilon_n^{7/2}}{\tau_n^{7/2}}\int_{B_{\tfrac{\delta}{\varepsilon_n}}(0)\setminus B_{\tfrac{\tau_n}{\varepsilon_n}}(0)}|G(x,x_n+\varepsilon_n z)|dz\nonumber\\
	&\leq&C\frac{\varepsilon_n^{3/2}}{\tau_n^{7/2}}\int_{\Omega}|G(x,y)|dy\nonumber\\
	&\stackrel{G\in L^1_y}{\leq}& \varepsilon_n \sqrt{ \frac{\varepsilon_n}{\tau_n^7}} C\overset{\eqref{rhop}}
	{=}o(\varepsilon_n).
	\end{eqnarray}
	For any $y\in B_{\tau_n}(x_n)$ and $x\notin B_{2\delta}(x_\infty)$ the function $G$ is regular and we can expand it in Taylor series:
	\begin{equation}
	\label{agoGreen}
	G(x,y)=G(x,x_n)+\sum_{j=1}^2\frac{\partial G}{\partial y_j}(x,x_n)(y-x_n)_j+\frac12 \sum_{j,k=1}^2 \frac{\partial^2 G}{\partial y_j \partial y_k}(x,\eta_n)(y-x_n)_j(y-x_n)_k,
	\end{equation}
	where $\eta_n$ is a point on the line between $y$ and $x_n$, so $\eta_n\in B_{\tau_n}(x_n)$. As a consequence
	\begin{eqnarray}\label{agoGreenIsa}
	C_{n}(x)&:=& \lambda_{i,n}\int_{B_{\tau_n}(x_n)} G(x,y)p_n u_{p_n}^{p_n-1}(y) v_{i,n}(y) dy
	\nonumber
	\\
		&\overset{\eqref{agoGreen}}{=}& D_n(x)	+E_n(x) + F_n(x),
\end{eqnarray}
where $E_n$ and $F_n$ are defined in \eqref{E_n} and \eqref{F_n} respectively and 
\[D_n(x):=
\frac{\lambda_{i,n}}{2}\int_{B_{\tau_n}(x_n)} \sum_{j,k=1}^2 \frac{\partial^2 G}{\partial y_j \partial y_k}(x,\eta_n)(y-x_n)_j(y-x_n)_k {p_n} u_{p_n}^{p_n-1}(y) v_{i,n}(y) dy.\]
We now prove that $D_n=o(\varepsilon_n)$.  Notice that, since $x\notin B_{2\delta}(x_\infty)$ and $\eta_n\in B_{\tau_n}(x_n)\subset B_\delta(x_\infty)$, we have
\[
\left| \frac{\partial^2 G}{\partial y_j \partial y_k}(x,\eta_n)\right|\leq \sup_{y\in B_\delta(x_\infty),\, j,k=1,2}\left| \frac{\partial^2 G}{\partial y_j \partial y_k}(x,y)\right|= C,
\]
so we get
\begin{eqnarray}
\label{Rip}
|D_{n}(x)|&\leq& C\tau_n\int_{B_{\tau_n}(x_n)}{p_n} u_{p_n}^{{p_n}-1}(y)|y-x_n| dy
\nonumber
\\&=& C\tau_n\varepsilon_n\int_{B_{\frac{\tau_n}{\varepsilon_n}(0)}}\left(1+\frac{w_n(z)}{p_n}\right)^{{p_n}-1}|z| dz\nonumber\\
&\overset{(\star)}{=}& C\tau_n \varepsilon_n \left( \int_{\R^2} e^{U(z)}|z| dz= C\tau_n\varepsilon_n+o(1)\right)\overset{\eqref{rhop}}{=}o(\varepsilon_n),
\end{eqnarray}
where the convergence  in $(\star)$ is due to \eqref{convRiscalateNeiMax} and the passage to the limit  is possible by virtue of Remark \ref{rem:lebesgue}, observing that \eqref{rhonuo} holds.
\\
The proof of \eqref{tesinaSTep1} follows  substituting  \eqref{Rip},  \eqref{agoGreenIsa}, \eqref{IIip} and \eqref{Iip} into \eqref{vip1}. \\
Observe that we have proved the $C^0_{loc}(\overline\Omega\setminus\{x_{\infty}\})$ convergence, the uniform convergence of the derivatives $\frac{\partial v_{i,n}}{\partial x_j}$, $j=1,2$ may be done in a similar way, so we omit it.
\\
\\
\\
{\emph{Step 2. We show that
\begin{equation}\label{F_nopiccolo}
F_n
=\varepsilon_n2\pi\sum_{j=1}^2 a^i_j\frac{\partial G}{\partial x_j}(\,\cdot\,,x_\infty) + o(\varepsilon_n)\, \mbox{ as }n\rightarrow+\infty \mbox{ in }C^1_{loc}(\overline\Omega\setminus\{x_{\infty}\}).\end{equation}
}}
\begin{eqnarray*}
	\frac{F_n(x)}{\varepsilon_n}&=&\frac{\lambda_{i,n}}{\varepsilon_n}\sum_{j=1}^2 \frac{\partial G}{\partial y_j}(x,x_n)\int_{B_{\tau_n}(x_n)}{p_n} u_{p_n}^{p_n-1}(y) v_{i,n}(y) (y-x_n)_j dy
\\
&=&(1+o(1))\left(\sum_{j=1}^2 \frac{\partial G}{\partial y_j}(x,x_\infty)+o(1)\right)\int_{B_{\frac{\tau_n}{\varepsilon_n}}(0)}\left(1+\frac{w_n(z)}{p_n}\right)^{{p_n}-1}\widetilde{v}_{i,n} z_j dz
\nonumber
\\
&\overset{(\star)}{=}&
\sum_{j=1}^2 \frac{\partial G}{\partial y_j}(x,x_\infty)\int_{\R^2}e^U \frac{a_1^i z_1+a_2^i z_2}{8+|z|^2}z_jdz+o(1)
\nonumber\\
&\overset{(\star\star)}{=}&2\pi \sum_{j=1}^2 a_j^i \frac{\partial G}{\partial y_j}(x,x_\infty)+o(1)	
\end{eqnarray*}
where the convergence in $(\star)$ is due to \eqref{convRiscalateNeiMax}, \eqref{vtildeasenzab} and \eqref{rhonuo} and the passage to the limit is allowed by Remark \ref{rem:lebesgue}, taking
$g_n(z):=\widetilde v_{i,n}z_j$, $g(z):=\frac{a_1^i z_1+a_2^i z_2}{8+|z|^2}z_j$ (by  \eqref{vtildeasenzab}) and $h(z)=|z|$ (since $\|\widetilde v_{i,n}\|_{L^{\infty}(\Omega_n)}=1$).
While the equality in  $(\star\star)$ is a consequence of the  definition of $U$ (see \eqref{definizioneU}), computing explicitely the integral.
It is not difficult to see that the convergence is  $C^0_{loc}(\overline\Omega\setminus\{x_{\infty}\})$, moreover in a similar way one can prove the $C^1_{loc}(\overline\Omega\setminus\{x_\infty\})$ convergence.
\\
\\
\\
{\emph{Step 3. We prove that \begin{equation}\label{tesinaSTEP3}E_n=o(\varepsilon_n)\quad\mbox{  in }C^1_{loc}(\overline\Omega\setminus\{x_{\infty}\}).\end{equation}
}}
\\
\\
By a change of variable and using \eqref{convptimax} we get
\begin{eqnarray}\label{combi}
E_n(x)
&=&\lambda_{i,n} G(x,x_n) \int_{B_{\tau_n}(x_n)}{p_n} u_{p_n}^{p_n-1}(y) v_{i,n}(y) dy\nonumber\\
&=&
 (1+o(1))G(x,x_\infty)\int_{B_{\frac{\tau_n}{\varepsilon_n}}(0)}\left(1+\frac{w_n(z)}{p_n}\right)^{p_n-1}\widetilde v_{i,n}(z) dz
 \nonumber
 \\
 &\overset{(\star)}{=}&  (1+o(1))G(x,x_\infty)\left(\int_{\R^2}e^{U}\frac{a_1^i z_1+a_2^i z_2}{8+|z|^2} dz+o(1)\right)=o(1)
\end{eqnarray}
where the passage to the limit in $(\star)$ is due to
\eqref{convRiscalateNeiMax}, \eqref{vtildeasenzab} and \eqref{rhonuo} and follows  by Remark \ref{rem:lebesgue}, taking
$g_n(z):=\widetilde v_{i,n}$, $g(z):=\frac{a_1^i z_1+a_2^i z_2}{8+|z|^2}$  and $h(z)=1$.
Let us define
	\[
	\gamma_{i,n}=\int_{B_{\frac{\tau_n}{\varepsilon_n}}(0)}\left(1+\frac{w_n(z)}{p_n}\right)^{{p_n}-1}\widetilde{v}_{i,n}(z) dz,
	\]
We prove that 
\begin{equation}\label{gammaeps}\gamma_{i,n}=o(\varepsilon_n).
\end{equation} 
Finally \eqref{gammaeps}, combined with \eqref{combi},  implies that $E_n(x)=o(\varepsilon_n)$, moreover it is not difficult to see that this convergence is  $C^0_{loc}(\overline\Omega\setminus\{x_{\infty}\})$; then, in a similar way, one can also prove the $C^1_{loc}(\overline\Omega\setminus\{x_\infty\})$ convergence, getting \eqref{tesinaSTEP3}, we omit the details.
\\
Proof of \eqref{gammaeps}: let us suppose by contradiction that $\lim_{n\to+\infty}\frac{\varepsilon_n}{\gamma_{i,n}}=c<+\infty$.
	Then by \eqref{tesinaSTep1} and \eqref{F_nopiccolo}
	\begin{equation}
	\label{vipgammaip}
	\frac{v_{i,n}(x)}{\gamma_{i,n}} = \frac{E_n(x)}{\gamma_{i,n}}  +2\pi c\sum_{j=1}^2 a_j^i \frac{\partial G}{\partial y_j}(x,x_\infty)+o(1),
	\end{equation}
Observe that by \eqref{star} we have
	\begin{equation}\label{star2}
	\int_{\partial\Omega}{p_n}\frac{\partial u_{p_n}}{\partial x_j}\frac{\partial v_{i,n}}{\partial\nu}d\sigma_x=(1-\lambda_{i,n})\int_\Omega p_n^2 u_{p_n}^{{p_n}-1}\frac{\partial u_{p_n}}{\partial x_j}v_{i,n}dx.
	\end{equation}
We can evaluate the l.h.s. of \eqref{star2} combining \eqref{convergenzapup} with \eqref{vipgammaip}, indeed
	\begin{equation}\label{sx}
	\begin{aligned}
	\int_{\partial\Omega}&p_n\frac{\partial u_{p_n}}{\partial x_j}\frac{\partial v_{i,n}}{\partial\nu}d\sigma_x=8\pi\sqrt{e}\gamma_{i,n}\left[\int_{\partial\Omega}\frac{\partial G}{\partial x_j}(x,x_\infty)\frac{\partial G}{\partial \nu}(x,x_\infty)d\sigma_x+\right.
	\\
	&\qquad\qquad+\left.\int_{\partial\Omega} \frac{\partial G}{\partial x_j}(x,x_\infty) \frac{\partial}{\partial\nu}(2\pi c\sum_{k=1}^2 a_k^i\frac{\partial G}{\partial y_k})d\sigma_x+o(1)\right]
	\nonumber\\
	&=8\pi\sqrt{e}\gamma_{i,n}\!\Bigg[\int_{\partial\Omega}\nu_j(x)\left(\frac{\partial G}{\partial\nu}(x,x_\infty)\right)^2 \!d\sigma_x
		\\
	&\qquad\qquad+2\pi c\sum_{k=1}^2 a_k^i\!\int_{\partial\Omega}\frac{\partial G}{\partial x_j}(x,x_\infty)\frac{\partial}{\partial y_k}\frac{\partial G}{\partial \nu}(x,x_\infty)d\sigma_x+o(1)\Bigg]
	\nonumber\\
	&\stackrel{\eqref{Robin1}+\eqref{Robin2}}{=} 8\pi\sqrt e \gamma_{i,n}\left(\frac{\partial R}{\partial y_j}(x_\infty)+\pi c\sum_{k=1}^2 a^i_k \frac{\partial^2 R}{\partial x_k \partial x_j}(x_\infty)+o(1)\right)\nonumber\\
	&\stackrel{\nabla R(x_\infty)=0}{=}8\pi^2 \sqrt{e} c \gamma_{i,n}
	\left(\sum_{k=1}^2 a^i_k \frac{\partial^2 R}{\partial x_k \partial x_j}(x_\infty)+o(1)\right).
	\end{aligned}
	\end{equation}
	In order to estimate the r.h.s. of \eqref{star2} we first observe that
	\[
	\begin{aligned}
	\left| \int_{\Omega\setminus B_r(x_n)}\right. &\left.p_n^2 u_{p_n}^{p_n-1} \frac{\partial u_{p_n}}{\partial x_j} v_{i,n} dx \right|\leq p_n\|u_{p_n}\|_{L^\infty(\Omega\setminus B_r(x_n))}^{p_n-1}\int_{\Omega\setminus B_r(x_n)}p_n\left|\frac{\partial u_{p_n}}{\partial x_j}\right| dx\\
	&\stackrel{\eqref{utile}+\eqref{convergenzapup}}{\leq} o(1)\left(8\pi\sqrt e\|\frac{\partial G}{\partial x_j}\|_{L^\infty(\Omega\setminus B_{\frac{r}{2}}(x_\infty))}|\Omega|+o(1)\right)=o(1).
	\end{aligned}
	\]
	Hence
	\begin{equation}
	\label{dx}
	\begin{aligned}
	\int_{\Omega} p_n^2 u_{p_n}^{p_n-1}&\frac{\partial u_{p_n}}{\partial x_j} v_{i,n} dx=
	\int_{B_r(x_n)} p_n^2 u_{p_n}^{{p_n}-1}\frac{\partial u_{p_n}}{\partial x_j} v_{i,n} dx + o(1)\\
	&=\frac{1}{\varepsilon_n}\int_{B_{\frac{r}{\varepsilon_n}}(0)}\left(1+\frac{w_n}{p_n}\right)^{p_n-1}\!\!\! p_n\frac{\partial \widetilde u_{p_n}}{\partial z_j}\widetilde v_{i,n} dz+o(1)
	\\
	&\stackrel{(\star)}{=}\frac{1}{\varepsilon_n}\left(\int_{\R^2}e^{U}\sqrt{e}\frac{\partial U}{\partial z_j} \frac{a_1^i z_1+a_2^i z_2}{8+|z|^2}dz+o(1)\right)
	\\
	&=\frac{1}{\varepsilon_n}\left(-\frac{\sqrt e \pi}{3}a^i_j+o(1)\right),
	\end{aligned}
	\end{equation}
	by explicit computation, while the passage to the limit in $(\star)$ follows  by \eqref{convRiscalateNeiMax} and Remark \ref{rem:lebesgue}. Indeed defining  
	$g_n(z):=p_n \frac{\partial \widetilde u_{p_n}}{\partial z_j}\widetilde v_{i,n}$,  one can take   $g(z):=\sqrt{e}\frac{\partial U}{\partial z_j} \frac{a_1^i z_1+a_2^i z_2}{8+|z|^2}$  thanks to
	\eqref{convnablautilde} and \eqref{vtildeasenzab},
	and  $h(z):=\frac{C}{|z|}$ by \eqref{P4tilde}.
	\\	
	Putting together \eqref{star2}, \eqref{sx} and  \eqref{dx} we get
	\begin{equation}
	\label{beforefirst}
	8\pi^2 \sqrt{e} c \gamma_{i,n}\left(	\sum_{k=1}^2 a^i_k \frac{\partial^2 R}{\partial x_j \partial x_k}(x_\infty)+o(1)\right)=(1-\lambda_{i,n})\frac1{\varepsilon_n}(-\frac{\sqrt{e}\pi}{3}a_j^i+o(1))
	\end{equation}
	and finally
	\begin{equation}
	\label{first}
	(\lambda_{i,n}-1)=\frac{24\pi c \gamma_{i,n}\varepsilon_n}{a_j^i}\sum_{k=1}^2 a_k^i\frac{\partial^2 R}{\partial x_j \partial x_k}(x_\infty)\,(1+o(1)),
	\end{equation}
	for $j$ such that $a_j^i\neq 0$.
	
	Now we consider the Pohozaev identity \eqref{integralidentity} computed at the point $x_n$:
	\begin{equation}\label{integralidentityNOSTRA}
	\int_{\partial\Omega}(x-x_n)\cdot\nabla u_{p_n}\frac{\partial v_{i,n}}{\partial\nu}d\sigma_x=(1-\lambda_{i,n})p_n\int_\Omega  u_{p_n}^{{p_n}-1}v_{i,n}\left((x-x_n)\cdot\nabla u_{p_n}+\frac{2}{{p_n}-1}u_{p_n}\right)dx.
	\end{equation}
Passing into the limit in the l.h.s. of \eqref{integralidentityNOSTRA} and using \eqref{vipgammaip} and \eqref{convergenzapup}, we find 
	\begin{eqnarray}
	\label{sx2}
	&&\!\!\!\!\!\!\!\!\!\!\!\!\!\!\!\!\!\!\!\!\!\!\!\!\!\!\!\!\!\!\int_{\partial\Omega} (x-x_n)\cdot \nabla u_{p_n}\frac{\partial v_{i,n}}{\partial \nu}dx=
	\nonumber\\
	&=&
	\frac{\gamma_{i,n}}{p_n}\int_{\partial \Omega}
	8\pi\sqrt{e}(x-x_\infty)\cdot\nabla G (x,x_\infty)\frac{\partial G}{\partial \nu}(x,x_\infty) d\sigma_x
	\nonumber\\
	&& +\, \frac{\gamma_{i,n}}{p_n}\left(
	2\pi c\sum_{j=1}^2 a_j^i\int_{\partial \Omega}8\pi\sqrt{e} (x-x_\infty) \cdot\nabla G (x,x_\infty)
	\frac{\partial^2 G}{\partial \nu\partial y_j}(x,x_\infty) d\sigma_x\, +o(1)\right)
	\nonumber\\
	&\stackrel{\eqref{Green1}+\eqref{Green2}}{=}&\frac{\gamma_{i,n}}{p_n}(4\sqrt{e})+8\pi^2\sqrt{e} c\sum_{j=1}^2 a_j^i\frac{\partial R}{\partial y_j}(x_\infty)+o(1))
	\nonumber\\
	&\stackrel{\nabla R(x_\infty)=0}{=}&\frac{\gamma_{i,n}}{p_n}(4\sqrt{e}+o(1)).
	\end{eqnarray}
	Concerning the r.h.s. of \eqref{integralidentityNOSTRA}, the same computations performed in \eqref{ago4} and \eqref{ago5}, with $b^i=0$, give
	\begin{equation}
	\label{dx2}
	(1-\lambda_{i,n})\int_\Omega p_n u_{p_n}^{p_n-1}v_{i,n}\left((x-y)\cdot\nabla u_p+\frac2{p_n-1} u_{p_n}\right)dx=(1-\lambda_{i,n})o(1).
	\end{equation}
	By \eqref{sx2} and \eqref{dx2} we deduce
	\[
	\frac{\gamma_{i,n}}{p_n}(4\sqrt{e}+o(1))=(1-\lambda_{i,n})o(1),
	\]
	and in turn by \eqref{first}
	\[
	-\frac{\gamma_{i,n}}{p_n}(4\sqrt{e}+o(1))=\frac{24\pi c \gamma_{i,n}\varepsilon_n}{a_j^i}\sum_{k=1}^2 a_k^i\frac{\partial^2 R}{\partial x_j \partial x_k}(x_\infty)\,o(1)
	\]
	which is impossible. This proves that  $\gamma_{i,n}=o(\varepsilon_n)$.	
	\\\\\\
	{\emph{Step 4. Conclusion of the proof of \eqref{ele2} in Theorem \ref{teoAutofunzioniAutovalori} }}
	
	\
	
	From \eqref{tesinaSTep1}, \eqref{F_nopiccolo} and \eqref{tesinaSTEP3} we get \eqref{ele2}.
\end{proof}

\

\

\subsection{Proof of \eqref{ele1} in Theorem \ref{teoAutofunzioniAutovalori}}

\begin{proof}%[Proof of \eqref{ele1} in Theorem \ref{teoAutofunzioniAutovalori}]
	We estimate the behavior of $(\lambda_{i,n}-1)$ using \eqref{star2}.\\
	Arguing as in the proof of \eqref{ele2} in Theorem \ref{teoAutofunzioniAutovalori}, where the l.h.s. of \eqref{star2} is estimated, but using \eqref{ele2} itself instead of \eqref{vipgammaip}, we obtain 
	\[
	\lambda_{i,n}-1 =\frac{24\pi\varepsilon_n^2}{a_j^i}\sum_{k=1}^2 a_k^i\frac{\partial^2 R}{\partial x_j \partial x_k}(x_\infty)\,(1+o(1))
	\]
	so
	\[
	\frac{1-\lambda_{i,n}}{\varepsilon_n}\to 24\pi\eta_i
	\]
	where $\eta_i=\left(\sum_{k=1}^2 a_k^i\frac{\partial^2 R}{\partial x_j \partial x_k}(x_\infty)\right)/a_j^i$ for $j$ such that $a_j^i\neq0$.\\
	Moreover we have
	\begin{equation}\label{etaieigenvalue}
	\sum_{k=1}^2 a_k^i\frac{\partial^2 R}{\partial x_j \partial x_k}(x_\infty)=\eta_i a_j^i
	\end{equation}
	both if $a_j^i\neq0$ and if $a_j^i=0$ by the analogous of \eqref{beforefirst} once \eqref{vipgammaip} is substituted by \eqref{star2}.\\
	From \eqref{etaieigenvalue} we have that $\eta_i$ is an eigenvalue of $D^2R(x_\infty)$, the hessian matrix of the Robin function $R$ at the point $x_{\infty}$ with $a^i$ as corresponding eigenvector.\\
	By Proposition \ref{thm:eigenfunctions} the eigenvectors $a^i$ are orthogonal, thus the numbers $\eta_i$ are the eigenvalues $\mu_1\leq\mu_2$ of $D^2 R(x_\infty)$. In particular, since $\lambda_{2,n}\leq \lambda_{3,n}$, then $\eta_2= \mu_1$ and $\eta_3= \mu_2$.
\end{proof}

\

\

\section{Proof of \eqref{ele4} and \eqref{ele3} of Theorem \ref{teoAutofunzioniAutovalori} and proof of Theorem \ref{theorem:Morse1bubble}}\label{section:ele34} 

In this section we study the asymptotic behavior as $n\rightarrow +\infty$ of the fourth eigenvalue $\lambda_{4,n}$ and fourth eigenfunction   $v_{4,n}$ of the linearized problem \eqref{eigenvalueProblem}, proving \eqref{ele4} and \eqref{ele3} of Theorem \ref{teoAutofunzioniAutovalori}. A the end of the section, using the results in Theorem \ref{teoAutofunzioniAutovalori}, we then prove Theorem \ref{theorem:Morse1bubble}.

\subsection{Proof of \eqref{ele4} and \eqref{ele3} of Theorem \ref{teoAutofunzioniAutovalori}}

\begin{proof}	By the variational characterization of the eigenvalues
	\begin{equation}\label{lun0}
	\lambda_{4,n}=\inf_{\substack{
			v\in H^1_0(\Omega),\,v\not\equiv0\\ v\perp\{v_{1,n},v_{2,n},v_{3,n}\}}}\frac{\int_\Omega |\nabla v|^2 dx}{p_n\int_\Omega u_{p_n}^{p_n-1}v^2 dx}.
	\end{equation}
	Let 
	\[\psi_{4,n}(x):=(x-x_n)\cdot\nabla u_{p_n}(x)+\frac{2}{p_n-1}u_{p_n}(x)\] and let us define the function
	\[v:=\widehat\phi_{n}\psi_{4,n}+a_{1,n}v_{1,n}+a_{2,n}v_{2,n}+a_{3,n}v_{3,n},\] with	
\[
\widehat\phi_{n}(x):=\left\{
\begin{array}{ll}
1&\mbox{if $|x-x_n|\leq\varepsilon_n$}\\
\frac{1}{\log\frac{\varepsilon_n}{r}}\log\frac{|x-x_n|}{r}&\mbox{if $\varepsilon_n<|x-x_n|\leq r$}\\
0&\mbox{if $|x-x_n|>r$}
\end{array}
\right.
\]
(for $r>0$ as in the statement of Theorem \ref{teoAsymptotic}) and
	\[
	a_{i,n}:=-\frac{\int_\Omega p_n  u_{p_n}^{p_n-1}\widehat\phi_{n}\psi_{4,n}v_{i,n}dx}{p_n\int_\Omega u_{p_n}^{p_n-1}v_{i,n}^2 dx}=-\frac{N_{i,n}}{D_{i,n}}\qquad i=1,2,3.
	\]
Observe that by definition $v\perp \{v_{1,n},v_{2,n},v_{3,n}\}$ in $H^1_0(\Omega)$.\\\\
{\emph{Step 1. For any $i=1,2,3$ we show that $a_{i,n}=o(1)$, by proving that $N_{i,n}=o(1)$ and that there exist $d_i>0$ such that $D_{i,n}\geq d_i>0$.}}	
\\\\
Recalling that $\|v_{i,n}\|_\infty=1$ and observing that $\|\psi_{4,n}\|_{L^{\infty}(\Omega)}=o(1)$ by \eqref{P4} and \eqref{boundLoo}, we can easily estimate $N_{i,n}$:
\begin{equation*}
|N_{i,n}|=p_n\left|\int_\Omega u_{p_n}^{p_n-1}v_{i,n}\widehat\phi_n \psi_{4,n}\,dx\right|\leq o(1) \,p_n\int_\Omega u_{p_n}^{p_n-1}\,dx\stackrel{\eqref{bound_int_p-1}}{=}o(1).
\end{equation*}
\\\\
Next we estimate $D_{i,n}$, for $i=1,2,3$.\\
Being $v_{1,n}=u_{p_n}$, which by assumption is a $1$-spike-sequence of solutions:
\[
D_{1,n}=p_n\int_\Omega u_{p_n}^{p_n+1}dx\overset{\eqref{problem}}{=}p_n\int_\Omega |\nabla u_{p_n}|^2dx\overset{\eqref{convergenzaenergia}}{\longrightarrow}8\pi e=:2d_1.
\]
While for $i=2,3$, by \eqref{utile} and $\|v_{i,n}\|_{\infty}=1$, (taking $r>0$ as in the statement of Theorem \ref{teoAsymptotic}) we get
	\begin{eqnarray*}
D_{i,n}&=&p_n\int_{B_r(x_n)} u_{p_n}^{p_n-1}v^2_{i,n}\,dx+o(1)\\
&=&\int_{B_{\frac{r}{\varepsilon_n}}(0)}\left(1+\frac{w_{n}(y)}{p_n}\right)^{p_n-1}\tilde{v}_{i,n}^2(y)\, dy+o(1)\\
&\overset{(\star)}{\longrightarrow}&\int_{\R^2} e^{U(y)}\left(\sum_{j=1}^2\frac{a^i_j y_j}{8+|y|^2}\right)^2
dy=:2 d_i>0,
\end{eqnarray*}
where in order to pass to the limit into $(\star)$ we have used Proposition \ref{thm:eigenfunctions} and Remark \ref{rem:lebesgue} (with $g_n=\tilde v_{i,n}^2$, $|g_n|\leq1$).

\

{\emph{Step 2.}}	Using that $v_{i,n}$ satisfies the eigenvalue problem  \eqref{eigenvalueProblem} and that $v_{i,n}$ and $v_{j,n}$ are orthogonal in $H^1_0(\Omega)$ if $i\neq j$, we have
	\begin{eqnarray*}
		\int_\Omega |\nabla v|^2dx&=&\int_\Omega |\nabla (\widehat\phi_{n}\psi_{4,n})|^2dx
		+2\sum_{i=1}^3 \lambda_{i,n}a_{i,n}N_{i,n}+\sum_{i=1}^3\lambda_{i,n}a_{i,n}^2D_{i,n}
		\\
		&\overset{Step\, 1}{=}&\int_\Omega |\nabla (\widehat\phi_{n}\psi_{4,n})|^2dx+o(1).
	\end{eqnarray*}
	Finally, since $\psi_{4,n}$ solves the linearized equation \eqref{eigenvalueProblem}, we have
	\begin{equation}
	\label{lun1}
	\int_\Omega |\nabla v|^2 dx=\int_\Omega \psi_{4,n}^2|\nabla \widehat\phi_{n}|^2dx+p_n\int_\Omega u_{p_n}^{p_n-1} \psi_{4,n}^2 \widehat\phi_{n}^2 dx+o(1).
	\end{equation}
	In a similar way we get
	\begin{equation}
	\label{lun2}
	p_n\int_\Omega u_{p_n}^{p_n-1} v^2 dx=p_n\int_\Omega u_{p_n}^{p_n-1} \psi_{4,n}^2 \widehat\phi_{n}^2 dx+o(1),
	\end{equation}
	therefore inserting \eqref{lun1} and \eqref{lun2} into \eqref{lun0} we get
	\[
	\lambda_{4,n}\leq1+\frac{\int_\Omega \psi_{4,n}^2|\nabla \widehat\phi_{n}|^2dx+p_n\int_\Omega u_{p_n}^{p_n-1} \psi_{4,n}^2 \widehat\phi_{n}^2 dx+o(1)}{p_n\int_\Omega u_{p_n}^{p_n-1} \psi_{4,n}^2 \widehat\phi_{n}^2 dx+o(1)}.
	\]
	Let us estimate the last two integrals
	\begin{eqnarray*}
		\int_\Omega \psi_{4,n}^2|\nabla \widehat\phi_{n}|^2dx&=&\frac{1}{\log^2(\frac{\varepsilon_n}{r})}\int_{\{\varepsilon_n\leq|x-x_n|\leq r\}}\left((x-x_n)\cdot\nabla u_{p_n}+\frac{2}{p_n-1}u_{p_n}\right)^2\frac{dx}{|x-x_n|^2}\\
		&=&\frac{1}{\log^2(\frac{\varepsilon_n}{r})}\int_{\{1\leq|y|\leq \frac r{\varepsilon_n}\}}\left(y\cdot\nabla\widetilde u_{p_n}+\frac{2}{p_n-1}\widetilde u_{p_n}\right)^2\frac{dy}{|y|^2}\\
		&\overset{\eqref{P4tilde}+\eqref{boundLootilde}}{\leq}&\frac{C}{p^2\,\log^2(\frac{\varepsilon_n}{r})}\int_1^{\frac{\varepsilon_n}{r}}\frac{ds}{s}\\
		&=&\frac{C}{p^2\,\log(\frac{\varepsilon_n}{r})}=\frac{o(1)}{p^2}.
	\end{eqnarray*}
	\begin{eqnarray*}
		p_n\int_\Omega u_{p_n}^{p_n-1} \psi_{4,n}^2 \widehat\phi_{n}^2 dx&=&
			p_n\int_{B_r(x_n)} u_{p_n}^{p_n-1} \psi_{4,n}^2 \widehat\phi_{n}^2 dx
\\
&=&\frac{1}{p_n^2}\int_{B_{\frac{r}{\varepsilon_n}}(0)}\!\!\!\left(1+\frac{w_n}{p_n}\right)^{p_n-1}\!\!\!\!\!\!\!\!\!p_n^2\left(y\cdot \nabla \widetilde u_{p_n}+\frac{2}{p_n-1}\widetilde u_{p_n}\right)^2\widehat\phi^2_n(x_n+\varepsilon_ny)dy\\
	&\overset{(\star)}{=}&\frac{1}{p_n^2}\left(\int_{\R^2}e^{U(y)}e(y\cdot \nabla U(y)+2)^2dy+o(1)\right)\\
		&\overset{\eqref{definizioneU}}{=}&\frac{1}{p_n^2}\left(\int_{\R^2}e^{U(y)}4e\left(\frac{8-|y|^2}{8+|y|^2}\right)^{2}dy+o(1)\right)\\
		&=&\frac{1}{p_n^2}\left(\frac{32}{3}\pi e+o(1)\right),
	\end{eqnarray*}

where in order to pass to the limit into $(\star)$ we have used Remark \ref{rem:lebesgue}, setting  $g_n(y):=p_n^2\left( y\cdot\nabla \widetilde u_{p_n}(y)+\frac{2}{{p_n}-1}\widetilde u_{p_n}(y)\right)^2\widehat\phi^2_n(x_n+\varepsilon_ny)$. Indeed, 
by \eqref{convutilde} and \eqref{convnablautilde}   one  has $g(y):= e(y\cdot \nabla U(y)+2)^2$   
and, by \eqref{P4tilde} and \eqref{boundLootilde}, one can take $h$ to be a constant function.

\

We have shown so far that $\lambda_{4,n}\leq 1+o(1)$ and hence $\lambda_{4,n}\to1$ as $n\to+\infty$.
	By Lemma \ref{lemma:limab}:
	\begin{equation*}
	\widetilde{v}_{4,n}(y)\longrightarrow\sum_{j=1}^2\frac{a^4_j y_j}{8+|y|^2}+ b^4\frac{8-|y|^2}{8+|y|^2}\qquad \mbox{as $n\to+\infty$\quad in $C^1_{loc}(\R^2)$}
	\end{equation*}
	with $(a^4_1,a^4_2,b^4)\neq(0,0,0)$. Let, for $i=2,3$, $a^i=(a_1^i,a_2^i)$ be as in \eqref{vtildeasenzab}. The eigenfunctions $v_{4,n}$ and $v_{i,n}$ are orthogonal in $H^1_0(\Omega)$ for $i=2,3$, then by using Proposition \ref{thm:eigenfunctions} we have that the vector $(a_1^4,a_2^4)$ is orthogonal to $(a_1^2,a_2^2)$ and $(a_1^3,a_2^3)$, which are both different from zero and orthogonal. This implies $(a_1^4,a_2^4)=(0,0)$ and so $b^4\neq0$. We are then in position to apply Proposition \ref{prop:assurdob} getting \eqref{ele3} and \eqref{ele4}, where $b:=-b^4$.
\end{proof}

\

\subsection{Proof of Theorem \ref{theorem:Morse1bubble}}
\begin{proof} 
	By \eqref{ele1} in Theorem \ref{teoAutofunzioniAutovalori} it follows that, for $i=2,3$: if $\mu_{i-1}<0$ then $\lambda_{i,n}<1$ for $n$ large, and conversely if $\lambda_{i,n}< 1$ for $n$ large then $\mu_{i-1}\leq 0$. As a consequence, recalling also that $\lambda_{i,n}=\frac{1}{p_n}<1$, we get:
	\[
	1+m(x_\infty)\leq m(u_{p_n})\leq m_0(u_{p_n})\leq 1+m_0(x_\infty), \quad\mbox{for $n$ large},
	\]
	where $m(u_{p_n})\leq m_0(u_{p_n})$ by definition. The thesis follows observing that
	$\mu_1+\mu_2=\Delta R(x_\infty)$ and using the strict subharmonicity of the Robin function  (see \eqref{Robinequation} in Lemma \ref{thm:Robin}), which implies that $m_0(x_\infty)\leq 1$.
\end{proof}

\

\

\section{The case $\Omega$ convex}\label{section:conclusion}

Along this section we assume that $\Omega$ is convex and prove Corollary \ref{teoMorse}.

\

\begin{proof}[Proof of Corollary \ref{teoMorse}] $\;$\\
	We argue by contradiction assuming that there exist $C>0$ and a sequence $u_{p_n}$, $p_n\to+\infty$, of solutions to \eqref{problem} such that $\sup_n p_n \|\nabla u_{p_n}\|^2_{2}\leq C$ and 
	\begin{equation}
	\label{last}
	m(u_{p_n})\neq 1\quad \mbox{or}\quad u_{p_n} \mbox{  is degenerate}\quad \forall\:n.
	\end{equation}
Applying Theorem \ref{teoAsymptotic}, we have that there exist $k\in\mathbb N$, a set $\mathcal S\subset\Omega$ of $k$ points and a subsequence $u_{p_{n_j}}$ satisfying all conditions \eqref{pu_va_a_zeroTeo}---\eqref{P4}. \\
	Since $\Omega$ is convex, we can apply Theorem 2.4 in  \cite{GrossiTakahashi} to deduce that  $u_{p_{n_j}}$ is a $1$-spike-sequence of solutions, i.e. $k=1$, so that the concentration set $S$
	reduces to a single point that we denote by $x_{\infty}$. 
	By \eqref{x_j relazioneTeo}, $x_{\infty}\in \Omega$ is a critical point of the  Robin function $R$.
	\\
	Since $\Omega$ is a convex bounded domain,  by Lemma \ref{thm:Robin2} it follows that $x_{\infty}$ is nondegenerate and that $m(x_\infty)=0$. \\
	So, by Theorem \ref{theorem:Morse1bubble}, there exists $j^{*}\in\mathbb N$ such that  
	\begin{equation*}
m(u_{p_{n_j}})=1\quad\mbox{and}\quad u_{p_{n_j}}\mbox{ is nondegenerate}, \quad \forall j\geq j^{*}.\end{equation*}
	This clearly contradicts \eqref{last}.
\end{proof}

\

We conclude the section pointing out once more that, since in convex domains \eqref{energylimit} is equivalent to the uniform bound \eqref{boundSirakov} proved recently in \cite{Sirakov}, then the Morse index of any solution of \eqref{problem} in any convex domain is one if the exponent $p$ is large. This gives Theorem \ref{teoUnicita} as described in the Introduction.
\

\


\begin{thebibliography}{99}
		
		\bibitem{AdimurthiGrossi} 
		Adimurthi, M. Grossi, 
		\emph{Asymptotic estimates for a two-dimensional problem with polynomial nonlinearity},
		Proc. Amer. Math. Soc. 132 (2003), no. 4, 1013-1019.
		
		
		\bibitem{AftalionPacella}
		A. Aftalion, F. Pacella, \emph{Qualitative properties of nodal solutions of semilinear elliptic
			equations in radially symmetric domains},
		C. R. Acad. Sci. Paris, Ser. I 339 (2004) 339-344.
		
		\bibitem{BahriLions}
		A. Bahri, P.L. Lions, \emph{Solutions of superlinear elliptic equations and their Morse indices}, Comm. on Pure and Appl. Math. 45  (1992), no. 9, 1205-1215.
		
		
		\bibitem{BahriLiRey}
		A. Bahri, Y. Li, O. Rey, \emph{On a variational problem with lack of compactness: the topological effect of the critical points at infinity}, Calc. Var. Partial Differential Equations 3 (1995) 67-93.
		
		
\bibitem{BandleFlucher}
C. Bandle,  M. Flucher, \emph{Harmonic radius and concentration of energy;
	hyperbolic radius and Liouville's equations $\Delta U=e^{U}$ and $\Delta U=U^{\frac{N+2}{N-2}}$} SIAM REVIEW Vol. 38, No. 2 (1996) 191-238.

		\bibitem{BCGP}
		T. Bartsch, M. Clapp, M. Grossi, F. Pacella,
		\emph{Asymptotically radial solutions in expanding annular domains},
		Math. Ann. 352 (2012), no. 2, 485-515. 
		
		\bibitem{BartschWeth}
		T. Bartsch, T. Weth,
		\emph{A note on additional properties of sign changing solutions to superlinear elliptic equations},
		Topological Methods in Nonlinear Analysis 22 (2003), 1-14.
		
		\bibitem{BergmanBOOK}
		S. Bergman,  \emph{The kernel function and conformal mapping}, Am. Math. Soc., Providence, RI, 1970.
		
		
		\bibitem{CaffarelliFriedman}
		L. A. Caffarelli, A. Friedman, \emph{Convexity of solutions of semilinear elliptic equations}, Duke Math. J. 52 (1985), no. 2, 431-456.
		
		\bibitem{CaffarelliSpruck}
		L. A. Caffarelli, J. Spruck, \emph{Convexity properties of solutions to some classical variational problems}, Comm. Partial Differential Equations 7 (1982), no. 11, 1337-1379. 
		
		\bibitem{CCN}
		A. Castro, J. Cossio, J.M. Neuberger,
		\emph{A sign-changing solution for a superlinear Dirichlet problem},
		Rocky Mountain J. Math. 27 (1997), no. 4, 1041-1053.
		
		
		\bibitem{CaWa}
		F. Catrina, Z.-Q. Wang,
		\emph{Nonlinear elliptic equations on expanding symmetric domains}, 
		J. Differential Equations 156 (1999), no. 1, 153-181.
		
		
		\bibitem{ChoiKimLee}
		W. Choi, S. Kim, K.-A. Lee, \emph{Qualitative properties of multi-bubble solutions for nonlinear elliptic equations involving critical exponents}, Advances in Mathematics 298 (2016) 484-533.
		
		
		\bibitem{DamascelliGrossiPacella}
		L. Damascelli, M. Grossi, F. Pacella,  \emph{Qualitative properties of positive solutions of elliptic
			equations in symmetric domains via the maximum principle}, Ann. Inst. H. Poincar\'e 16 (1999),  631-652.
		
		
		\bibitem{Dancer88}
		E.N. Dancer, \emph{The effect of domain shape on the number of positive solutions of certain nonlinear equations}, J. Diff. Eqns, 74 (1988), 120-156.
		
		\bibitem{DancerMa2003}
		E.N. Dancer, \emph{Real analyticity and non-degeneracy}, Math. Ann. 325 (2003), 369-392.
		
		
		\bibitem{DancerIndiana2004}
		E.N. Dancer, \emph{Stable and finite Morse index solutions on  $\mathbb R^N$ or on bounded domains with small diffusion. II.} Indiana Univ. Math. J. 53 (2004), no. 1, 97-108. 
		
		\bibitem{DancerTAMS2005}
		E.N. Dancer, \emph{Stable and finite Morse index solutions on $\mathbb R^N$ or on bounded domains with small diffusion},
		Trans. of the Amer. Math. Soc. 357 (2005), no. 3, 1225-1243.
		
		
		\bibitem{FrancescaTalk}
		F. De Marchis, \emph{Uniqueness result for solutions of lane-emden problems in convex domains}, \emph{2nd Italian-Chilean Workshop in PDE's}, Roma 2018, Jan. 15-19.
		
		
		\bibitem{DIPJEMS}
		F. De Marchis, I. Ianni and F. Pacella,
		Asymptotic analysis and sign changing bubble towers for Lane-Emden problems,
		Journal of the European Mathematical Society 17 (2015), no. 8, 2037-2068 .
		
		\bibitem{DIPsurvey} F. De Marchis, I. Ianni, F. Pacella, \emph{Asymptotic analysis for the Lane-Emden problem in dimension two}, Lecture Notes, London Mathematical Society, to appear.
		
		\bibitem{DIPpositive} 
		F. De Marchis, I. Ianni, F. Pacella, 
		\emph{Asymptotic profile of positive solutions of  Lane-Emden problems in dimension two}, Journal of Fixed Point Theory and Applications 19 (2017), no. 1, 889-916.
		
		
		\bibitem{DeMarchisIanniPacellaMathAnn}
		F. De Marchis, I. Ianni and F. Pacella, \emph{Exact Morse index computation for nodal radial solutions of  Lane-Emden problems}, Mathematische Annalen 367 (2017), no. 1, 185-227.
		
		\bibitem{DeMarchisIanniPacellaAdvances}
		F. De Marchis, I. Ianni and F. Pacella, \emph{A Morse index formula for radial solutions of Lane-Emden problems},  Advances in Mathematics 322 (2017) 682-737.
		
		\bibitem{DGIPsqrte}
		F. De Marchis, M. Grossi, I. Ianni and F. Pacella, \emph{$L^\infty$-norm and energy quantization for the planar Lane-Emden problem with large exponent}, preprint 2018.
		
		\bibitem{ElMehdiGrossi}
		K. El Mehdi, M. Grossi,
		\emph{Asymptotic estimates and qualitative properties of an elliptic problem in dimension two}, Adv. Nonlinear Stud. 4 (2004), no. 1, 15–36.
		
		\bibitem{EspositoMussoPistoiaPos} 
		P. Esposito, M. Musso, A. Pistoia, 
		\emph{Concentrating solutions for a planar elliptic problem involving nonlinearities with large exponent}, J. Differential Equations 227 (2006), 29-68.
		
		\bibitem{Farina}
		A. Farina, \emph{On the classification of solutions of the Lane-Emden equation on unbounded domains of $\mathbb R^N$}, J. Math. Pures Appl.  87 (2007), no. 5, 537-561. 
		
		\bibitem{GNN}
		B. Gidas, W.-M. Ni, L. Nirenberg,
		\emph{Symmetry and related properties via the maximum principle}, Commun. Math. Phys. 68 (1979), 209-243.
		
		\bibitem{GladialiGrossiCPDE}
		F. Gladiali, M. Grossi,
		\emph{Some results for the Gelfand's problem}, Comm. Partial Diff. Equations 29 (2004), 1335-1364.
		
		\bibitem{GladialiGrossi}
		F. Gladiali, M. Grossi,
		\emph{On the spectrum of a nonlinear planar problem}, Ann. I. H. Poincar\'e-AN 26 (2009), 191-222.
		
		\bibitem{GGPS}
		F. Gladiali, M. Grossi, F. Pacella, P.N. Srikanth,
		\emph{Bifurcation and symmetry breaking for a class
			of semilinear elliptic equations in an annulus},
		Calc. Var. 40 (2011), 295-317.
		
		
		
		\bibitem{GladialiIanni}
		F. Gladiali, I. Ianni, \emph{Quasi-radial nodal solutions for the Lane-Emden problem in the ball}, preprint arXiv:1709.03315
		\bibitem{GrossiADE2000}
		M. Grossi, \emph{A uniqueness result for a semilinear elliptic equation in symmetric domains}, Advances in Differential Equations 5 (2000), no. 1-3, 193-212.
		
		\bibitem{GrossiPacellaMathZ} 
		M. Grossi, F. Pacella,
		\emph{On an eigenvalue problem related to the critical exponent}, Math. Z. 250 (2005), 225-256. 
		
		
		
		
		\bibitem{GrossiTakahashi}
		M. Grossi, F. Takahashi,
		\emph{Nonexistence of multi-bubble solutions to some elliptic
			equations on convex domains}, Journal of Functional Analysis 259 (2010), 904-917.
		
		\bibitem{Gustafsson} 
		B. Gustafsson, 
		\emph{On the motion of a vortex in two-dimensional flow of an ideal fluid in simply and
			multiply connected domains}, Royal Institute of Technology, Technical Report, 1979,
		Stockholm, Sweden.
		
		\bibitem{Han}
		Z.-C. Han, \emph{Asymptotic approach to singular solutions for nonlinear elliptic equations involving critical Sobolev exponent}, Ann. Inst. H. Poincar\'e Anal. Non Lin\'eaire 8 (1991) 159-174.
		
		
		\bibitem{Sirakov}
		N. Kamburov, B. Sirakov \emph{Uniform a priori estimates for positive solutions of the Lane-Emden equation in the plane}, preprint arXiv:1802.00843.
		
		\bibitem{YYLi}
		Y.Y. Li, 
		\emph{Existence of many positive solutions of semilinear elliptic equations on annulus},
		J. Differential Equations 83 (1990), no. 2, 348–367.
		
		\bibitem{Lin}
		C.-S. Lin,
		\emph{Uniqueness of least energy solutions to a semilinear elliptic equations in $\R^2$}, Manuscripta Math. 84  (1994), 13-19.
		
		\bibitem{Computer}
		P.J McKenna, F.  Pacella, M. Plum, D. Roth, \emph{A uniqueness result for a semilinear elliptic problem: A computer-assisted proof}, Journal of Differential Equations 247 (2009), no. 7, 2140-2162.
		
		\bibitem{Computer2}
		P.J. McKenna, F. Pacella, M. Plum, D. Roth, \emph{A computer-assisted uniqueness proof for
			a semilinear elliptic boundary value problem}, Inequalities and Applications 2010, International
		Series of Numerical Mathematics, Vol. 161, Part 1, (2012) 31-52.
		
	\bibitem{Pacella}
		F. Pacella, \emph{Symmetry results for solutions of semilinear elliptic equations with convex nonlinearities}, Journal of Functional Analysis
		Volume 192, Issue 1, 20  (2002) 271-282.
	
		\bibitem{FilomenaTalk}
		F. Pacella, \emph{Positive solutions of Lane Emden
			problems in dimension $2$ and uniqueness in convex domains},  \emph{X Workshop on Nonlinear Differential Equations}, Brasilia 2017, Sept. 4-8.
		
		
		
		\bibitem{PacellaWeth2007}
		F. Pacella, T. Weth, \emph{Symmetry of solutions to semilinear elliptic equations via Morse index}, 
		Proc. Amer. Math. Soc. 135 (2007), no. 6, 1753-1762.
		
		
		\bibitem{RenWeiTAMS1994} X. Ren, J. Wei, \emph{On a two-dimensional elliptic problem with large exponent in nonlinearity}, Trans. Amer. Math. Soc. 343 (1994), no. 2, 749-763.
		
		\bibitem{RenWeiPAMS1996}
		X. Ren,  J. Wei, \emph{Single-point condensation and least-energy solutions}, Proc. Amer. Math. Soc. 124 (1996), no. 1, 111-120.
		
		\bibitem{Rey}
		O. Rey, \emph{The topological impact of critical points at infinity in a variational problem with lack of compactness: the dimension $3$}, Adv. Differential Equations 4 (1999) 581-616.
		
		
		\bibitem{Schoen}
		R. Schoen, Graduate course in topics of differential geometry, given at Stanford University and Courant Institute, 1988-1989.
		
		\bibitem{Srikanth}
		P.N. Srikanth, \emph{Uniqueness of solutions of nonlinear Dirichlet problems},
		Differential Integral Equations 6 (1993), no. 3, 663-670. 
		
		\bibitem{Struwe}
		M. Struwe, \emph{A global compactness result for elliptic boundary value problems involving limiting nonlinearities}, Math. Z. 187 (1984) 511-517.
		
		\bibitem{YangJFA98}
		X.-F. Yang, \emph{Nodal Sets and Morse indices of solutions of superlinear elliptic PDEs}, Journal of Functional Analysis 160 (1998) 223-253. 
		
		\bibitem{Zou}
		H. Zou, \emph{On the effect of the domain geometry on uniqueness of positive solutions of $Δu+u^p=0$}, Annali della Scuola Normale Superiore di Pisa - Classe di Scienze, Serie 4: Volume 21 (1994), no. 3, 343-356.
		
		
\end{thebibliography}
	\end{document}